 \newcommand{\ROM}[1]{\mathrm{\uppercase\expandafter{\romannumeral#1}}}
  \theoremstyle{definition}
   \numberwithin{equation}{section} \theoremstyle{plain}
 \newtheorem{thm}{Theorem}[section]
 \newtheorem{lem}{Lemma}[section]
 \newtheorem{cor}{Corollary}[section]
 \newtheorem{rem}{Remark}[section]
 \newtheorem{prop}{Proposition}[section]
  \numberwithin{equation}{section}
\title[On two questions of James]{\textbf{On two questions of James}}
\author[Chao Qian]{Chao Qian}\address{School of Mathematics and Statistics, Beijing Institute of Technology, Beijing
100081, P.R. China}
\email{6120150035@bit.edu.cn}
\author[Z. Z. Tang]{Zizhou Tang}\address{Chern Institute of Mathematics $\&$ LPMC, Nankai University, Tianjin 300071, P. R. China}
\email{zztang@nankai.edu.cn}
\author[W. J. Yan]{Wenjiao Yan}\address{School of Mathematical Sciences, Laboratory of Mathematics and Complex Systems, Beijing Normal University, Beijing, 100875, P. R. China}
\email{wjyan@bnu.edu.cn}
\thanks {The project is partially supported by the NSFC (No.11871282, 11931007), BNSF (Z190003), Nankai Zhide Foundation and Beijing Institute of Technology Research Fund Program for Young Scholars.
}
\subjclass[2010] { 53C40, 54C50, 57R55.}
\keywords{James' questions, Fibration, Clifford matrices, Isoparametric family.}
\begin{document}

\maketitle

\begin{abstract}
64 years ago, I. M. James raised two fundamental questions about octonionic Stiefel spaces. The prime objective of this paper is to figure out
partial answers to both of them.
\end{abstract}

\section{\textbf{Introduction}}\label{sec1}
There are a number of fascinating and important properties of the Stiefel manifolds $V_k(\mathbb{R}^n)$, $V_k(\mathbb{C}^n)$ and $V_k(\mathbb{H}^n)$,
which are defined to be the set of all orthonormal $k$-frames in $\mathbb{R}^n$, $\mathbb{C}^n$, or $\mathbb{H}^n$ with respect to real, complex or quaternionic inner products. For example, they are compact smooth manifolds, and diffeomorphic to homogeneous spaces: $V_k(\mathbb{R}^n)\cong O(n)/O(n-k)$, $V_k(\mathbb{C}^n)\cong U(n)/U(n-k)$, $V_k(\mathbb{H}^n)\cong \mathrm{Sp}(n)/\mathrm{Sp}(n-k)$,  
respectively. 
In particular, $V_1(F^n)$ ($F=\mathbb{R}, \mathbb{C}, \mathbb{H}$) are unit spheres and
$V_2(F^n)$ can be regarded as sphere bundles over spheres. Moreover, there are natural fibre bundles $\pi: V_k(F^n)\rightarrow V_q(F^n)(q<k)$ by 
taking the last $q$ vectors of each $k$-frame as a $q$-frame. Furthermore, $V_k(F^n)$ is parallelizable whenever $k>1$ (cf. \cite{Sut64}).
The description of the values of $n$, $k$ and $q$ for which the fiber bundle $\pi$ admits
a cross-section has been widely studied(cf. \cite{Ja58}, \cite{Jam59}, \cite{Jam76}).

Comparing with $V_k(F^n)$ ($F=\mathbb{R}, \mathbb{C}, \mathbb{H}$), the octonionic Stiefel space has not been defined until 1958, when  I. M. James \cite{Ja58}
introduced $V_k(\mathbb{O}^n)$---the space of orthonormal $k$-frames in $\mathbb{O}^{n}$ with natural topology. Specifically, using the octonionic inner product $\langle \cdot, \cdot \rangle_{\mathbb{O}}$, 
he defined
\begin{equation}\label{james}
V_k(\mathbb{O}^n)=:\{(a_1,\cdots ,a_k)\in \mathbb{O}^n\oplus \cdots\oplus
\mathbb{O}^n~|~\langle a_i, a_j\rangle_{\mathbb{O}}=\delta_{ij}, \;1\leqslant i,j\leqslant k   \}.
\end{equation} 
However, although it can be defined, one still knows very little about $V_k(\mathbb{O}^n)$
due to the non-associativity of the multiplication in $\mathbb{O}$ and the lack of a transitive group action on the space for $k>1$ in general.

Similar to the real, complex and quaternion case, James defined a canonical projection
\begin{equation}\label{def pi}
\pi: V_{k}(\mathbb{O}^n)\rightarrow V_l(\mathbb{O}^n),\quad (1\leqslant l< k \leqslant n)
\end{equation}
 and asked the following fundamental \textbf{Questions}:
\begin{itemize}
\item[(1)] Is the projection $\pi$ a fiber map ?
\item[(2)] Is $V_k(\mathbb{O}^n)$ a manifold ? 
\end{itemize}

When $k=2$, James \cite{Ja58} proved that the projection $\pi: V_{2}(\mathbb{O}^n)\rightarrow V_1(\mathbb{O}^n)$ is a fiber map, hence $V_{2}(\mathbb{O}^n)$ admits a manifold structure. As the total space of a sphere bundle over sphere, $V_{2}(\mathbb{O}^n)$ is parallelizable (cf. \cite{Sut64}). We know that $V_2(\mathbb{O}^2)$ cannot be an $H$-space (cf. \cite{Jam61}, \cite{DS69}). Moreover,
according to \cite{Kra02}, for $n=2$, $V_2(\mathbb{O}^2)$ is a homogeneous space and diffeomorphic to $\mathrm{Spin}(9)/G_2$, and for $n\geqslant 3$, $V_{2}(\mathbb{O}^n)$ is not a homogeneous space.  James \cite{Ja58} found that a cross-section of the fiber bundle $\pi: V_{2}(\mathbb{O}^n)\rightarrow V_1(\mathbb{O}^n)$ occurs when $n=240$.
Furthermore, it was shown in \cite{QTY22} that
the sphere bundle $S^{8n-9}\hookrightarrow V_{2}(\mathbb{O}^n)\rightarrow V_1(\mathbb{O}^n)$ admits a cross-section if and only if $n$ can be divided by $240$.

The present paper is mainly concerned with the two fundamental questions of I. M. James on $V_k(\mathbb{O}^n)$.
Firstly, a partial answer to Question (1) is given as follows.

\begin{thm}\label{fiber}
The projection $\pi: V_{k+1}(\mathbb{O}^n)\rightarrow V_k(\mathbb{O}^n)$ $(n> k\geqslant 2)$ is not a fibration in the sense of Serre.
\end{thm}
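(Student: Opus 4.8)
The plan is to use the fact that a Serre fibration pulled back over a path has weakly homotopy equivalent end fibres, and to contradict this by producing two frames, joined by a path in $V_k(\mathbb{O}^n)$, whose fibres are spheres of \emph{different} dimensions.

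First I would describe the fibres. Write a point of $V_k(\mathbb{O}^n)$ as $\mathbf{a}=(a_1,\dots,a_k)$ with $a_i=(a_i^1,\dots,a_i^n)\in\mathbb{O}^n$. Octonionic multiplication is $\mathbb{R}$-bilinear, so the map
\[
L_{\mathbf{a}}\colon\mathbb{O}^n\to\mathbb{O}^k\cong\mathbb{R}^{8k},\qquad L_{\mathbf{a}}(b)=\bigl(\langle a_1,b\rangle_{\mathbb{O}},\dots,\langle a_k,b\rangle_{\mathbb{O}}\bigr),
\]
is $\mathbb{R}$-linear; since $\langle b,b\rangle_{\mathbb{O}}=|b|^2$ and $\langle b,a_i\rangle_{\mathbb{O}}=\overline{\langle a_i,b\rangle_{\mathbb{O}}}$, the fibre $\pi^{-1}(\mathbf{a})$ is precisely the unit sphere of the real subspace $\ker L_{\mathbf{a}}\subseteq\mathbb{R}^{8n}$; thus every fibre is a round sphere and only its dimension can vary. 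Because the $\mathbb{R}$-adjoint of left multiplication by $x\in\mathbb{O}$ is left multiplication by $\bar{x}$, a short computation shows that $L_{\mathbf{a}}^{*}$ sends $(\xi_1,\dots,\xi_k)$ to the element of $\mathbb{O}^n$ with $j$-th entry $\sum_i a_i^{\,j}\xi_i$; hence $L_{\mathbf{a}}$ has maximal rank $8k$ unless the frame vectors satisfy a nontrivial right relation $\sum_i a_i\xi_i=0$ (componentwise multiplication, $\xi_i\in\mathbb{O}$ not all $0$), in which case $\dim\ker L_{\mathbf{a}}\ge 8(n-k)+1$, so $\pi^{-1}(\mathbf{a})$ is a sphere of dimension $\ge 8(n-k)$ rather than the generic value $8(n-k)-1$. (Over $\mathbb{R},\mathbb{C},\mathbb{H}$, associativity gives $\langle a_m,\sum_i a_i\xi_i\rangle=\xi_m$, forcing $\xi_m=0$; this is exactly where non-associativity intervenes.)

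Next I would exhibit two frames with different behaviour. The standard frame $(\epsilon_1,\dots,\epsilon_k)$, with $\epsilon_i$ the $i$-th coordinate vector of $\mathbb{O}^n$, satisfies no right relation, so its fibre is $S^{8(n-k)-1}$. For a frame that does, take (for $k=2$)
\[
a_1=\tfrac{1}{\sqrt{2}}(1,\,e_1,\,0,\dots,0),\qquad a_2=\tfrac{1}{\sqrt{2}}(e_2,\,-e_3,\,0,\dots,0),
\]
where $e_1,\dots,e_7$ are imaginary octonion units with $e_1e_2=e_3$ and $e_1e_3=-e_2$; a direct check with the multiplication table shows this is an orthonormal $2$-frame and that $a_1\xi_1+a_2\xi_2=0$ for $\xi_1=-e_6$, $\xi_2=e_4$, the essential input being the non-associativity identity $(e_1e_2)e_4=-e_1(e_2e_4)$. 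Its fibre therefore has dimension $\ge 8(n-k)$. For $k\ge 3$ I would take $(a_1,a_2,\epsilon_3,\dots,\epsilon_k)$, which is still orthonormal and still carries this relation with the extra components of $\xi$ equal to $0$.

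Finally I would connect the frames inside $V_k(\mathbb{O}^n)$. Deform $a_2$ to $a_2(t)=\tfrac{1}{\sqrt{2+t^2}}(e_2,\,-e_3,\,0,\dots,0,\,t,\,0,\dots)$ with the entry $t$ placed in coordinate slot $k+1$ (available since $n>k$). As $a_1,\epsilon_3,\dots,\epsilon_k$ vanish in that slot, $(a_1,a_2(t),\epsilon_3,\dots,\epsilon_k)$ is an orthonormal $k$-frame for every $t$, equals the frame above at $t=0$, and for $t\ne 0$ admits no nontrivial right relation, so has fibre $S^{8(n-k)-1}$. Thus $t\mapsto(a_1,a_2(t),\epsilon_3,\dots,\epsilon_k)$, $t\in[0,1]$, is a path in $V_k(\mathbb{O}^n)$ joining a point with fibre of dimension $\ge 8(n-k)$ to one with fibre $S^{8(n-k)-1}$. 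If $\pi$ were a Serre fibration, pulling it back along this path would force these two fibres to be weakly homotopy equivalent; but a sphere of dimension $\ge 8(n-k)$ and $S^{8(n-k)-1}$ have different homology in degree $8(n-k)-1\ge 7$ — a contradiction. The one substantive step is the construction in the preceding paragraph, namely recognising that over $\mathbb{O}$ an orthonormal frame can be right $\mathbb{O}$-linearly dependent and writing down an explicit such frame; the linear-algebra reduction and the path construction are then routine.
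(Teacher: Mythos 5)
Your proposal is correct and follows essentially the same route as the paper: both arguments exhibit an orthonormal octonionic $2$-frame admitting a nontrivial one-sided $\mathbb{O}$-linear relation (forced by non-associativity), so that the fibre over the corresponding point of $V_k(\mathbb{O}^n)$ is a sphere of strictly larger dimension than the generic fibre $S^{8(n-k)-1}$, and both then conclude from the weak homotopy invariance of fibres of a Serre fibration over a path (the paper computes the exceptional fibre exactly as $S^{8(n-k)+3}$ and cites path-connectedness of $V_2(\mathbb{O}^2)$, while you settle for a lower bound on the fibre dimension and an explicit path, which suffices). The only caveat is that your orthonormality check uses the convention $\langle a,b\rangle_{\mathbb{O}}=\sum_j \overline{a^j}\,b^j$; under the paper's convention $A\overline{A}^t=I_k$ you should conjugate the entries of your frame (or use the paper's frame $\tfrac{1}{\sqrt2}(e_7,e_3)$, $\tfrac{1}{\sqrt2}(e_2,-e_6)$), after which everything goes through verbatim.
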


We also make another partial progress to Question (1) in Proposition \ref{onto}, which will be discussed after we introduce Theorem \ref{omega}.

To attack Question (2),
since $V_1(\mathbb{O}^n)$ and $V_2(\mathbb{O}^n)$ are already known to be smooth manifolds, the direct and challenging problem is to ask whether
$V_3(\mathbb{O}^n)$ admits a manifold structure.
For this, we will introduce the spaces $\Omega_{l,m}$, which are generalizations of $V_3(\mathbb{O}^n)$ based on the representation theory of Clifford algebra. Let $E_1, \cdots, E_{m-1}$ be a representation of the Clifford algebra $\mathcal{C}_{m-1}$ on $\mathbb{R}^l$ with $l=n\delta(m)$, where $n$ is a positive integer and $\delta(m)$ is the dimension of the irreducible module of $\mathcal{C}_{m-1}$ valued as:

\begin{center}
\begin{tabular}{|c|c|c|c|c|c|c|c|c|cc|}
\hline
$m$ & 1 & 2 & 3 & 4 & 5 & 6 & 7 & 8 & $\cdots$ &$m$+8 \\
\hline
$\delta(m)$ & 1 & 2 & 4 & 4 & 8 & 8 & 8 & 8 &$\cdots$ &16$\delta(m)$\\
\hline
\end{tabular}
\end{center}
That is to say, $E_i$'s are skew-symmetric endomorphisms of $\mathbb{R}^l$ satisfying $E_iE_j+E_jE_i=-2\delta_{ij}Id$ for $i, j=1,\cdots, m-1$. We define
\begin{equation}\label{def omega}
\Omega_{l,m}:=\left\{(\textbf{a}, \textbf{b}, \textbf{c})\in \mathbb{R}^l\oplus\mathbb{R}^l\oplus\mathbb{R}^l~\vline~\begin{array}{lll}~~\,\,|\textbf{a}|^2=|\textbf{b}|^2=|\textbf{c}|^2=1, \vspace{1mm}\\
\langle \textbf{a}, \textbf{b} \rangle=\langle \textbf{b}, \textbf{c} \rangle=\langle \textbf{c}, \textbf{a} \rangle=0,\vspace{1mm}\\
\langle \textbf{a}, E_i\textbf{b} \rangle=\langle \textbf{b}, E_j\textbf{c} \rangle=\langle \textbf{c}, E_k\textbf{a} \rangle=0,\vspace{1mm}\\                 \,\,\,\,\,~\textrm{for}~i,j,k=1,\cdots, m-1.\end{array}\right\}.
\end{equation}
We emphasize that the space $\Omega_{l,m}$ depends not only on the values of $l$ and $m$, but also on the choice of the Clifford system $\{E_1,\cdots, E_{m-1}\}$.


As the second main result of this paper, we show
\begin{thm}\label{omega}
When it is not empty, $\Omega_{l, m}\subset \mathbb{R}^{3l}$ is a closed regular submanifold of dimension $3(l-m-1)$ with trivial normal bundle.
\end{thm}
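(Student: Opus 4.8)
The plan is to exhibit $\Omega_{l,m}$ as the regular zero locus of one explicit map; everything else is then formal. Let $F\colon\mathbb{R}^{3l}=\mathbb{R}^l\oplus\mathbb{R}^l\oplus\mathbb{R}^l\to\mathbb{R}^{3(m+1)}$ be the map whose components are the left sides of the defining relations in \eqref{def omega}: the three functions $|\mathbf{a}|^2-1,\ |\mathbf{b}|^2-1,\ |\mathbf{c}|^2-1$, the three functions $\langle\mathbf{a},\mathbf{b}\rangle,\ \langle\mathbf{b},\mathbf{c}\rangle,\ \langle\mathbf{c},\mathbf{a}\rangle$, and the $3(m-1)$ functions $\langle\mathbf{a},E_i\mathbf{b}\rangle,\ \langle\mathbf{b},E_j\mathbf{c}\rangle,\ \langle\mathbf{c},E_k\mathbf{a}\rangle$; thus $\Omega_{l,m}=F^{-1}(0)$. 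Since $F$ is continuous and $\Omega_{l,m}$ lies inside the compact set $S^{l-1}\times S^{l-1}\times S^{l-1}$, it is compact, hence a closed subset of $\mathbb{R}^{3l}$. If $0$ is a regular value of $F$ — i.e. $dF$ has rank $3(m+1)$ at every point of $\Omega_{l,m}$ — then $\Omega_{l,m}$ is an embedded submanifold of codimension $3(m+1)$, so of dimension $3l-3(m+1)=3(l-m-1)$, and its normal bundle is spanned by the globally defined gradient fields of the components of $F$, hence trivial. So the entire content of the theorem is the single assertion that these $3(m+1)$ gradients are linearly independent at each point of $\Omega_{l,m}$.

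To prove that, fix a point and suppose some linear combination of the gradients, with coefficients $\alpha_1,\dots,\alpha_6$ (for the norms and the three inner products) and $\beta_i,\gamma_j,\epsilon_k$ (for the Clifford constraints), vanishes. Computing the gradients (each $E_i$ being skew-symmetric) and reading off the three $\mathbb{R}^l$-blocks, I would first pair each block in turn with $\mathbf{a}$, $\mathbf{b}$, $\mathbf{c}$: every resulting term $\langle E_i\mathbf{a},\mathbf{b}\rangle$, $\langle E_j\mathbf{b},\mathbf{c}\rangle$, $\langle E_k\mathbf{c},\mathbf{a}\rangle$ (and its transposes) is $0$ by the defining relations of $\Omega_{l,m}$, and $\langle E_i\mathbf{x},\mathbf{x}\rangle=0$ by skew-symmetry, so this immediately forces $\alpha_1=\cdots=\alpha_6=0$. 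What is left are the three matrix identities
\[
B\mathbf{b}=\mathcal{E}\mathbf{c},\qquad B\mathbf{a}=\Gamma\mathbf{c},\qquad \mathcal{E}\mathbf{a}=\Gamma\mathbf{b},
\]
where $B=\sum_i\beta_iE_i$, $\Gamma=\sum_j\gamma_jE_j$, $\mathcal{E}=\sum_k\epsilon_kE_k$; for such linear combinations $P=\sum p_iE_i$, $Q=\sum q_iE_i$ of the $E_i$ one has $P^2=-|p|^2\,\Id$ (so $P/|p|$ is orthogonal when $p\neq0$) and $PQ+QP=-2\langle p,q\rangle\,\Id$.

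The crux is to force $\beta=\gamma=\epsilon=0$ from these three identities. Taking norms gives $|\beta|=|\gamma|=|\epsilon|=:t$. If $t>0$, then $B,\Gamma,\mathcal{E}$ are invertible ($\Gamma^{-1}=-t^{-2}\Gamma$, etc.); from $B\mathbf{a}=\Gamma\mathbf{c}$ we get $\mathbf{c}=-t^{-2}\Gamma B\mathbf{a}$, and pairing with $\mathbf{a}$, together with $\langle\mathbf{c},\mathbf{a}\rangle=0$ and the decomposition $\Gamma B=-\langle\gamma,\beta\rangle\,\Id+\tfrac12[\Gamma,B]$ (whose commutator part is skew-symmetric), forces $\langle\gamma,\beta\rangle=0$; the other two identities similarly give $\langle\beta,\epsilon\rangle=\langle\gamma,\epsilon\rangle=0$. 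Hence $B,\Gamma,\mathcal{E}$ pairwise anticommute, so $\mathcal{E}$ commutes with $\Gamma B$ while $(\Gamma B)^2=-t^4\,\Id$. Now substitute $\mathbf{c}=-t^{-2}\Gamma B\mathbf{a}$ into $B\mathbf{b}=\mathcal{E}\mathbf{c}$ to get $\mathbf{b}=t^{-4}B\mathcal{E}\Gamma B\mathbf{a}$, and feed this into $\mathcal{E}\mathbf{a}=\Gamma\mathbf{b}$ to obtain $t^4\mathcal{E}\mathbf{a}=\Gamma B\mathcal{E}\Gamma B\mathbf{a}=(\Gamma B)^2\mathcal{E}\mathbf{a}=-t^4\mathcal{E}\mathbf{a}$, so $\mathcal{E}\mathbf{a}=0$; but $|\mathcal{E}\mathbf{a}|=t|\mathbf{a}|=t$, a contradiction. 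Therefore $t=0$, all coefficients vanish, $dF$ has full rank along $\Omega_{l,m}$, and the theorem follows.

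I expect the main obstacle to be precisely this last step — extracting $B=\Gamma=\mathcal{E}=0$ from the reduced identities. The difficulty is that products $E_iE_j$ with $i\neq j$ need not lie in $\mathrm{span}\{E_1,\dots,E_{m-1}\}$, so a naive pairing argument leaves uncontrolled cross terms; one has to first distill the pairwise orthogonality $\langle\beta,\gamma\rangle=\langle\gamma,\epsilon\rangle=\langle\epsilon,\beta\rangle=0$ (which only needs $\langle\mathbf{a},\mathbf{b}\rangle=\langle\mathbf{b},\mathbf{c}\rangle=\langle\mathbf{c},\mathbf{a}\rangle=0$, once the $\alpha$'s are gone) in order to bring the Clifford anticommutation relations into play, after which the collapse $(\Gamma B)^2=-t^4\,\Id$ finishes things. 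All other steps — closedness, the dimension count, triviality of the normal bundle — are routine once the submersion property is in hand.
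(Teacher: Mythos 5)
Your proposal is correct, and the reduction to the linear independence of the $3(m-1)$ gradients $\nabla\langle \mathbf{a},E_i\mathbf{b}\rangle$, $\nabla\langle \mathbf{b},E_j\mathbf{c}\rangle$, $\nabla\langle \mathbf{c},E_k\mathbf{a}\rangle$ (after killing the six coefficients of the norm and plain inner-product constraints by pairing with $\mathbf{a},\mathbf{b},\mathbf{c}$) is exactly the paper's first step. Where you diverge is in the crucial independence argument. The paper forms the $3(m-1)\times 3(m-1)$ Gram matrix $G$ of these gradients and observes, using the skew-symmetry $\Phi_{\mathbf{ab}}=-\Phi_{\mathbf{ba}}$ of the blocks $(\Phi_{\mathbf{ab}})_{ij}=\langle E_i\mathbf{a},E_j\mathbf{b}\rangle$ on $\Omega_{l,m}$, that $G=I+H$ where $H$ is itself the Gram matrix of the $3(m-1)$ vectors $E_i\mathbf{c},E_i\mathbf{a},E_i\mathbf{b}$; hence $G$ is the sum of a positive definite and a positive semidefinite matrix, so positive definite, and no hypothetical dependence relation ever needs to be analyzed. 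You instead assume a vanishing combination, package it into the operator identities $B\mathbf{b}=\mathcal{E}\mathbf{c}$, $B\mathbf{a}=\Gamma\mathbf{c}$, $\mathcal{E}\mathbf{a}=\Gamma\mathbf{b}$, extract $|\beta|=|\gamma|=|\epsilon|$ and the pairwise orthogonalities $\langle\beta,\gamma\rangle=\langle\gamma,\epsilon\rangle=\langle\epsilon,\beta\rangle=0$ from the constraints $\langle\mathbf{a},\mathbf{b}\rangle=\langle\mathbf{b},\mathbf{c}\rangle=\langle\mathbf{c},\mathbf{a}\rangle=0$, and then use the resulting anticommutation and $(\Gamma B)^2=-t^4\,\mathrm{Id}$ to reach $\mathcal{E}\mathbf{a}=0$, a contradiction; all of these steps check out. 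The paper's route is shorter and avoids any case analysis, while yours is more explicit about the algebraic mechanism and in effect characterizes how a degeneration would have to look — a point of view the authors do exploit later (Lemma \ref{degenerate}) when studying when the vectors $E_i\mathbf{a}, E_j\mathbf{b}$ become dependent.
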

As a direct consequence, we give a partial answer to Question (2) of James.
\begin{cor}\label{v3ok}
$V_3(\mathbb{O}^{n})$ is a smooth manifold of dimension $3(8n-9)$.
\end{cor}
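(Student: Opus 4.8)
The plan is to realize $V_3(\mathbb{O}^n)$ as one of the spaces $\Omega_{l,m}$ and then appeal to Theorem \ref{omega}. First I would identify $\mathbb{O}^n$ with $\mathbb{R}^{8n}$, let $\{1,e_1,\dots,e_7\}$ be the standard orthonormal basis of $\mathbb{O}$ (so that $\bar e_s=-e_s$), and write out the octonionic inner product $\langle\mathbf a,\mathbf b\rangle_{\mathbb{O}}=\sum_{t=1}^{n}a_t\overline{b_t}\in\mathbb{O}$ in real coordinates. Using the composition‑algebra identity $\langle xy,z\rangle=\langle x,z\bar y\rangle$, which remains valid in $\mathbb{O}$ in spite of non‑associativity, the real part of $\langle\mathbf a,\mathbf b\rangle_{\mathbb{O}}$ is the Euclidean inner product $\langle\mathbf a,\mathbf b\rangle$ on $\mathbb{R}^{8n}$, while the coefficient of $e_s$ in $\langle\mathbf a,\mathbf b\rangle_{\mathbb{O}}$ equals $\langle\mathbf a,E_s\mathbf b\rangle$, where $E_s$ denotes the coordinate‑wise left multiplication $(E_s\mathbf b)_t:=e_sb_t$.

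Next I would check that $\{E_1,\dots,E_7\}$ is a Clifford system on $\mathbb{R}^{8n}$. Skew‑symmetry follows from $\langle e_sx,y\rangle=\langle x,\bar e_s y\rangle=-\langle x,e_s y\rangle$, and the relations $E_sE_r+E_rE_s=-2\delta_{sr}\Id$ follow from linearizing the left alternative law $L_x^2=L_{x^2}$, which gives $L_xL_y+L_yL_x=L_{xy+yx}$ and hence $e_s(e_rx)+e_r(e_sx)=(e_se_r+e_re_s)x=-2\delta_{sr}x$. Since $\delta(8)=8$, one may take $m=8$ and $l=8n=n\,\delta(8)$, so that $\{E_1,\dots,E_7\}$ is precisely a representation of $\mathcal{C}_{m-1}=\mathcal{C}_7$ on $\mathbb{R}^l$ in the sense required by Theorem \ref{omega}.

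It then remains to match the defining equations. For $(\mathbf a,\mathbf b,\mathbf c)\in(\mathbb{O}^n)^3$ the condition $\langle\mathbf a,\mathbf a\rangle_{\mathbb{O}}=1$ is equivalent to $|\mathbf a|^2=1$, because $\sum_t a_t\overline{a_t}$ is automatically real; and for a distinct pair, $\langle\mathbf a,\mathbf b\rangle_{\mathbb{O}}=0$ is equivalent to the $1+7$ scalar equations $\langle\mathbf a,\mathbf b\rangle=0$ and $\langle\mathbf a,E_s\mathbf b\rangle=0$ for $s=1,\dots,7$. Since $\langle\mathbf b,\mathbf a\rangle_{\mathbb{O}}=\overline{\langle\mathbf a,\mathbf b\rangle_{\mathbb{O}}}$, it suffices to impose these for the three cyclic pairs $(\mathbf a,\mathbf b)$, $(\mathbf b,\mathbf c)$, $(\mathbf c,\mathbf a)$, and this is exactly the system defining $\Omega_{8n,8}$ in \eqref{def omega}; hence $V_3(\mathbb{O}^n)=\Omega_{8n,8}$ for this particular Clifford system. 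Finally, for $n\geqslant 3$ the space is non‑empty, since the frame formed by the first three coordinate vectors of $\mathbb{O}^n$ lies in it, so Theorem \ref{omega} applies and yields that $V_3(\mathbb{O}^n)$ is a closed regular submanifold of $\mathbb{R}^{24n}$ of dimension $3(l-m-1)=3(8n-9)$ with trivial normal bundle, in particular a smooth manifold. The point I expect to require the most care is not the manifold conclusion itself — which is handed to us by Theorem \ref{omega} — but the bookkeeping that converts the imaginary components of $\langle\,\cdot\,,\,\cdot\,\rangle_{\mathbb{O}}$ into the Clifford‑type equations $\langle\mathbf a,E_s\mathbf b\rangle=0$ and that verifies the Clifford relations for the operators $E_s$, since both steps must be run through the alternative‑algebra identities rather than through associativity.
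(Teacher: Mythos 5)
Your argument is correct and follows essentially the same route as the paper: realize $V_3(\mathbb{O}^n)$ as $\Omega_{8n,8}$ for the Clifford system given by coordinate-wise left multiplication by $e_1,\dots,e_7$ (where you linearize the left alternative law, the paper invokes Artin's theorem, which amounts to the same verification of the Clifford relations) and then apply Theorem \ref{omega}. The only detail you omit is non-emptiness for $n=2$, which the paper settles by exhibiting an explicit point of $V_3(\mathbb{O}^2)$; this does not affect the validity of your argument for $n\geqslant 3$ or the manifold conclusion itself.
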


To prove Theorem \ref{omega}, listing $F_i$ as the real functions corresponding to the functions $|\textbf{a}|^2-1$,  $|\textbf{b}|^2-1$,  $|\textbf{c}|^2-1$, $\langle \textbf{a}, \textbf{b} \rangle$, $\langle \textbf{b}, \textbf{c} \rangle$, $\langle \textbf{c}, \textbf{a} \rangle$,$\langle \textbf{a}, E_i\textbf{b} \rangle$, $\langle \textbf{b}, E_j\textbf{c} \rangle$ and $\langle \textbf{c}, E_k\textbf{a} \rangle$ in the definition of $\Omega_{l,m}$, we construct a smooth map
\begin{equation}\label{def F}
F: \mathbb{R}^{3l}\rightarrow \mathbb{R}^{3m+3}
\end{equation}
so that $\Omega_{l, m}=F^{-1}(0)$. To finish the proof of Theorem \ref{omega}, we will show that $0\in \mathbb{R}^{3m+3}$ is a regular value of $F$.


From another point of view, $\Omega_{l, m}$ can be regarded as a proper generalization of the focal submanifold $M_+$ of isoparametric hypersurfaces of OT-FKM type in the unit sphere $S^{2l-1}(1)$.
To express fluently, we give a short review of the isoparametric family of OT-FKM type. The isoparametric hypersurfaces in the unit sphere are those with constant principal curvatures. The number $g$ of distinct principal curvatures can be only $1, 2,3, 4$ or $6$. Among which, the isoparametric family with $g=4$ are the most complicated and with most abundant geometry (see for example, \cite{TY13}, \cite{TY15} and \cite{QTY21}). Meanwhile, except for two homogeneous cases, all the isoparametric families with $g=4$ are of OT-FKM type. By definition,
for a symmetric Clifford system $\{P_0,\cdots, P_m\}$ on $\mathbb{R}^{2l}$, \emph{i.e.}, $P_{i}$'s are symmetric matrices satisfying $P_{i}P_{j}+P_{j}P_{i}=2\delta_{ij}I_{2l}$, the following polynomial $P$ of degree $4$ on
$\mathbb{R}^{2l}$ is constructed (cf. \cite{OT75}, \cite{FKM81}):
\begin{eqnarray*}\label{FKM isop. poly.}
&&\qquad P:\quad \mathbb{R}^{2l}\rightarrow \mathbb{R}\nonumber\\
&&P(x) = |x|^4 - 2\displaystyle\sum_{i = 0}^{m}{\langle
P_{i}x,x\rangle^2}.
\end{eqnarray*}
The regular level sets of $f=P~|_{S^{2l-1}(1)}$ are called \emph{the isoparametric hypersurfaces of OT-FKM type} in $S^{2l-1}(1)$. The singular level sets $M_+=f^{-1}(1)$, $M_-=f^{-1}(-1)$ are called \emph{focal submanifolds}, which are proved to be minimal submanifolds of $S^{2l-1}(1)$ (cf. \cite{GT14}).
The multiplicities of the principal curvatures of an isoparametric hypersurface of OT-FKM type are $(m_1, m_2, m_1, m_2)$, where $(m_1, m_2)=(m, l-m-1)$ with
$l=n\delta(m)$. According to \cite{FKM81}, when $m\not \equiv 0 ~( \mathrm{mod}~4)$, there exists exactly one kind of OT-FKM type isoparametric family;
when $m\equiv 0~(\mathrm{mod} ~4)$, there are two kinds of OT-FKM type isoparametric families which are distinguished by $\mathrm{Trace}(P_0P_1\cdots P_m)$, namely, the family with $P_0P_1\cdots P_m=\pm I_{2l}$, 
which is called the \emph{definite} family, and the others with $P_0P_1\cdots P_m\neq\pm I_{2l}$
called \emph{indefinite}. There are exactly $[\frac{k}{2}]$ non-congruent indefinite families.

Since we can always express $P_0,\cdots, P_m$ up to an isomorphism as
\begin{equation*}\label{FKM}
P_0=\left(
\begin{matrix}
I_l & 0 \\
0 & -I_l \\
\end{matrix}\right),\;
P_1=\left(
\begin{matrix}
0 & I_l\\
I_l & 0 \\
\end{matrix}\right),
P_{i+1}=\left(
\begin{matrix}
0 & E_{i}\\
-E_{i} & 0 \\
\end{matrix}\right),
~\text{for}~ 1 \leqslant i\leqslant m-1,
\end{equation*}
where $E_1, \cdots, E_{m-1}$ are the generators of the representation of the Clifford algebra $\mathcal{C}_{m-1}$,
the focal submanifold $M_+=f^{-1}(1)$ is expressed as
\begin{eqnarray}
M_+
&=&\left\{x\in S^{2l-1}(1)~|~\langle P_0x, x\rangle=\langle P_1x, x\rangle=\cdots=\langle P_mx, x\rangle=0\right\}\nonumber\\
&=& \left\{(\textbf{a}, \textbf{b}) \in \mathbb{R}^l\oplus\mathbb{R}^l~\vline~\begin{array}{ll}\langle \textbf{a}, \textbf{b} \rangle=0, \,\,|\textbf{a}|^2=|\textbf{b}|^2=\frac{1}{2},\\
\langle \textbf{a}, E_i\textbf{b} \rangle=0, ~i=1,\cdots, m-1\end{array}\right\}.\label{M+}
\end{eqnarray}
Observe that in certain cases, even though the isoparametric family of OT-FKM type does not exist, the topological space $M_+$ defined in the same expression as above exists but not connected. 

\begin{rem}\label{Omega}
The class $\Omega_{l,m}$ contains several kinds of Stiefel manifolds those we are familiar with and $V_3(\mathbb{O}^n)$ of James that is our concern.
More precisely,
\begin{itemize}
\item[(1)] For $m=1$, $l=n\delta(1)=n$, $\Omega_{n,1}\cong V_3(\mathbb{R}^n)$;
\item[(2)] For $m=2$, $l=n\delta(2)=2n$, $\Omega_{2n,2}\cong V_3(\mathbb{C}^n)$;
\item[(3)] For $m=4$, $l=n\delta(4)=4n$, if $\{E_1, E_2, E_3\}$ is in the definite case (for example, define them in (\ref{4 def})), then $\Omega_{4n,4}\cong V_3(\mathbb{H}^n)$;
\item[(4)] For $m=8$, $l=n\delta(8)=8n$, if $\{E_1, \cdots, E_7\}$ is in the definite case, then $\Omega_{8n,8}\cong V_3(\mathbb{O}^n)$ of James. In other words, $V_3(\mathbb{O}^n)$ is a smooth manifold of dimension $3(8n-9)$, which is listed as Corollary \ref{v3ok}. For clarity, we can define $\{E_1, \cdots, E_7\}$ similarly as those in (\ref{4 def}) by replacing $\mathrm{i, j, k}$ with $e_1,\cdots, e_7$, where $\{1,e_1,e_2,\cdots, e_7\}$ is the standard orthonormal basis of the octonions $\mathbb{O}$. 
It follows from the Artin theorem that $e_i(e_jz)=-e_j(e_iz)$ for $i,j=1,\cdots, 7$, $i\neq j$ and $z\in\mathbb{O}$.
Hence, $\{E_1, \cdots, E_7\}$ becomes a representation of the Clifford algebra.

Surprisingly, we find that the point $A=\frac{1}{\sqrt{2}}\small{\left(\begin{matrix}e_7 &~~e_3\\ e_2 &-e_6\\~1& ~~e_4\end{matrix}\right)}\in V_3(\mathbb{O}^2)$, thus $V_3(\mathbb{O}^2)$ is a smooth manifold of dimension $21$. This completely exceeds one's expectation, including James'. \end{itemize}
\end{rem}

\begin{rem}\label{Y42}
The space $V_k(\mathbb{O}^n)$  is worthy of in-depth study, as many strange phenomena will occur in this space. For example,
$A=\frac{1}{\sqrt{2}}\small{\left(\begin{array}{ll}e_7 &~~e_3\\ e_2 &-e_6\\e_1& -e_5\\ ~1& ~~e_4\end{array}\right)}\in V_4(\mathbb{O}^2)$, thus $V_4(\mathbb{O}^2)$ is non-empty. Obviously, this could not happen in the real, complex or quaternion case.
\end{rem}

As mentioned in Remark \ref{Omega} (4), $V_3(\mathbb{O}^n)\cong\Omega_{8n,8}$ when $\{E_1, \cdots, E_7\}$ is in the definite case. Then we construct a smooth map between smooth manifolds
\begin{eqnarray}\label{pi2}
\pi: \quad V_3(\mathbb{O}^n)\cong\Omega_{8n,8}&\longrightarrow& S^{8n-1}(1)=V_1(\mathbb{O}^n),\\
A=\left(\textbf{a}, \textbf{b}, \textbf{c}\right)&\mapsto&\textbf{c}\nonumber
\end{eqnarray}
and give another partial answer to Question (1) of James:
\begin{prop}\label{onto}
When $n\geqslant 3$, the map $\pi: V_3(\mathbb{O}^n)\rightarrow V_1(\mathbb{O}^n)$ is surjective, but not submersive. In particular, $\pi$ is not a smooth fibre bundle.
\end{prop}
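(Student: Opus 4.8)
I would treat surjectivity and the failure of submersivity separately. For surjectivity, note that for any nonzero $\mathbf{v}\in\mathbb{R}^{8n}\cong\mathbb{O}^n$ the subspace $L(\mathbf{v}):=\mathrm{span}_{\mathbb{R}}\{\mathbf{v},E_1\mathbf{v},\dots,E_7\mathbf{v}\}$ has dimension $8$, since $E_1,\dots,E_7$ act by componentwise left octonionic multiplication and $\mathbb{O}$ has no zero divisors. Given a unit vector $\mathbf{c}\in V_1(\mathbb{O}^n)=S^{8n-1}(1)$, first choose a unit $\mathbf{b}\in L(\mathbf{c})^{\perp}$, possible as $\dim L(\mathbf{c})^{\perp}=8n-8>0$, and then a unit $\mathbf{a}\in L(\mathbf{c})^{\perp}\cap L(\mathbf{b})^{\perp}$, possible since $\dim\bigl(L(\mathbf{c})^{\perp}\cap L(\mathbf{b})^{\perp}\bigr)\geqslant 8n-16>0$ when $n\geqslant 3$. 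Using $\mathbf{a},\mathbf{b}\perp L(\mathbf{c})$, $\mathbf{a}\perp L(\mathbf{b})$ and the skew-symmetry identity $\langle\mathbf{c},E_k\mathbf{a}\rangle=-\langle E_k\mathbf{c},\mathbf{a}\rangle$, one checks immediately that $(\mathbf{a},\mathbf{b},\mathbf{c})$ satisfies every defining relation of $\Omega_{8n,8}\cong V_3(\mathbb{O}^n)$; since $\pi(\mathbf{a},\mathbf{b},\mathbf{c})=\mathbf{c}$, this proves $\pi$ is onto.

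For non-submersivity, a smooth fibre bundle is in particular a submersion, so it is enough to exhibit one point at which $\mathrm{d}\pi$ is not surjective; indeed, since $\Omega_{8n,8}$ is compact, Ehresmann's theorem would make any proper submersion onto $S^{8n-1}$ a locally trivial bundle, so submersivity is the only point at issue. As $\dim\Omega_{8n,8}=24n-27$ and $\dim S^{8n-1}=8n-1$, a submersion would force every fibre to be a smooth $(16n-26)$-manifold. I would work with the map $F$ of \eqref{def F}, for which $0$ is a regular value by the proof of Theorem \ref{omega}: then $T_A\Omega_{8n,8}=\ker\mathrm{d}F_A$ is cut out explicitly by the nine families of linearized defining equations, and $\mathrm{d}\pi_A(\dot{\mathbf{a}},\dot{\mathbf{b}},\dot{\mathbf{c}})=\dot{\mathbf{c}}$. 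Hence $\mathrm{d}\pi_A$ is onto $T_{\mathbf{c}}S^{8n-1}$ exactly when every $\dot{\mathbf{c}}\perp\mathbf{c}$ lifts to a solution of those equations. Unwinding them, the components of $\dot{\mathbf{a}}$ and of $\dot{\mathbf{b}}$ are prescribed along the spanning sets $L(\mathbf{c})\cup\{\mathbf{a}\}\cup L(\mathbf{b})$ and $L(\mathbf{c})\cup\{\mathbf{b}\}\cup L(\mathbf{a})$ respectively, and the resulting (coupled) linear system is solvable for all $\dot{\mathbf{c}}$ as long as these configurations are in general position — in particular as long as the three $8$-dimensional subspaces $L(\mathbf{a}),L(\mathbf{b}),L(\mathbf{c})$ are; it breaks down as soon as, say, $L(\mathbf{a})\cap L(\mathbf{c})\neq\{0\}$, because then the prescriptions obey a nontrivial linear relation that a generic $\dot{\mathbf{c}}\perp\mathbf{c}$ violates, so no lift exists.

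The whole matter thus reduces to producing an octonionic $3$-frame $A_0=(\mathbf{a}_0,\mathbf{b}_0,\mathbf{c}_0)\in V_3(\mathbb{O}^n)$ whose frame subspaces sit in such a special position — equivalently, for which the restricted Clifford system $\{P_W E_i P_W\}$ on $W:=L(\mathbf{c}_0)^{\perp}$ is degenerate, so that $\pi^{-1}(\mathbf{c}_0)$ even fails to be locally a $(16n-26)$-manifold at the corresponding point. This is the main obstacle, and it is the octonion-specific content of the statement: in the associative cases $V_3(\mathbb{R}^n),V_3(\mathbb{C}^n),V_3(\mathbb{H}^n)$ the subspace $L(\mathbf{a})$ is just the $F$-line $F\mathbf{a}$, the analogous configurations are always generic, and the corresponding projections really are fibre bundles; over $\mathbb{O}$ associativity fails, and — as the explicit points in Remarks \ref{Omega} and \ref{Y42} already hint — the frame subspaces can collide. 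I would build $A_0$ from a suitable Fano-plane arrangement of octonion units (a genuinely twisted $\mathbf{c}_0$, not the standard vector $(1,0,\dots,0)$, over which the fibre is instead the perfectly regular focal submanifold $M_+$ of an OT-FKM family on $\mathbb{O}^{n-1}$), and verify, directly from the octonionic multiplication table, both that $L(\mathbf{a}_0)\cap L(\mathbf{c}_0)\neq\{0\}$ and that the accompanying relation is incompatible with a generic $\dot{\mathbf{c}}\perp\mathbf{c}_0$. Granting this, $\mathrm{d}\pi_{A_0}$ is not surjective, so $\pi$ is not a submersion and, a fortiori, not a smooth fibre bundle.
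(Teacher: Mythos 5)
Your surjectivity argument is complete and correct, and it is genuinely different from (and more self-contained than) the paper's: the paper reduces surjectivity to James's fibration $p\colon V_2(\mathbb{O}^{n-1})\to V_1(\mathbb{O}^{n-1})$ and writes down explicit preimages in three cases according to $|c_n|$, whereas you simply pick a unit $\mathbf{b}\perp L(\mathbf{c})$ and a unit $\mathbf{a}\perp L(\mathbf{b})\oplus L(\mathbf{c})$, using that the $8$-dimensional spaces $L(\mathbf{v})=\mathrm{span}\{\mathbf{v},E_1\mathbf{v},\dots,E_7\mathbf{v}\}$ leave room when $8n-16>0$. This is exactly the dimension count the paper uses elsewhere to show $\Omega_{l,m}\neq\varnothing$ when $l-m-1\geqslant m$, and it buys a cleaner proof of the first half.

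The second half, however, has a genuine gap: you never produce the point at which $\mathrm{d}\pi$ fails to be surjective, nor the covector that witnesses the failure. You correctly identify that the obstruction should come from a frame whose subspaces $L(\mathbf{a}),L(\mathbf{b}),L(\mathbf{c})$ collide, but two things are only asserted, not proved: (i) that such a frame exists in $V_3(\mathbb{O}^n)$ (the sentence ``I would build $A_0$ from a suitable Fano-plane arrangement\dots and verify\dots'' defers precisely the octonion-specific computation that constitutes the proof), and (ii) that a nontrivial intersection such as $L(\mathbf{a})\cap L(\mathbf{c})\neq\{0\}$ automatically forces $\mathrm{d}\pi_{A_0}$ to miss some $\dot{\mathbf{c}}$. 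Point (ii) is not automatic: the linearized system couples $\dot{\mathbf{a}}$ and $\dot{\mathbf{b}}$ (e.g.\ $\langle\dot{\mathbf{a}},E_i\mathbf{b}\rangle=\langle E_i\mathbf{a},\dot{\mathbf{b}}\rangle$), so a degenerate configuration only produces a candidate linear relation, and one must still check that this relation actually constrains $\dot{\mathbf{c}}$ nontrivially. The paper does both steps concretely: it takes $\mathbf{a}=\tfrac{1}{\sqrt2}(e_2,-e_6,0,\dots,0)$, $\mathbf{b}=\tfrac{1}{\sqrt2}(e_1,-e_5,0,\dots,0)$, $\mathbf{c}=\tfrac{1}{\sqrt2}(1,e_4,0,\dots,0)$, records the identities $e_3\mathbf{b}=e_2\mathbf{c}$, $e_3\mathbf{a}=-e_1\mathbf{c}$, $e_2\mathbf{a}=e_1\mathbf{b}$ (your subspace collisions, made explicit), and then shows that the target vector $Y=e_2\mathbf{a}$ admits no lift by computing $\langle\mathbf{x}_2,e_3\mathbf{a}\rangle$ in two ways and obtaining $|e_2\mathbf{a}|^2=-|e_2\mathbf{a}|^2$. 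Without an explicit $A_0$ and an explicit $Y$ together with this kind of verification, your argument is a plausible strategy rather than a proof. (A small additional caveat: your aside that the fibre over the standard $\mathbf{c}_0=(1,0,\dots,0)$ is ``perfectly regular'' is true but irrelevant to the logic, and the Ehresmann remark is unnecessary since a fibre bundle projection is a submersion by definition.)
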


As smooth submanifolds in $S^{3l-1}(\sqrt{3})$, we are interested in various geometric and topological properties of $\Omega_{l,m}$. For instance, we establish minimality and the existence of circle bundle structure for certain $\Omega_{l,m}$ in Proposition \ref{m=4} and Proposition \ref{indefinite 4}.
More generally, we achieve the following

\begin{thm}\label{path}
When $m=1$, $n\geqslant 4$, or $m\geqslant 2$, $n\geqslant 3$, $\Omega_{l, m}$ is a path-connected closed smooth manifold. In particular, $V_3(\mathbb{O}^n)$ $(n\geqslant 3)$ is a path-connected closed smooth manifold.
\end{thm}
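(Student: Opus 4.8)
By Theorem \ref{omega} we already know that, whenever nonempty, $\Omega_{l,m}$ is a closed regular submanifold of $\mathbb{R}^{3l}$ of dimension $3(l-m-1)$ with trivial normal bundle; in particular it is a closed smooth manifold, so only path-connectedness remains to be established. The plan is to exhibit, for any two points $(\textbf{a}_0,\textbf{b}_0,\textbf{c}_0)$ and $(\textbf{a}_1,\textbf{b}_1,\textbf{c}_1)$ of $\Omega_{l,m}$, an explicit path in $\Omega_{l,m}$ joining them, built by a sequence of moves that each stay inside the constraint set. A convenient first reduction is to the focal submanifold $M_+$ recalled in (\ref{M+}): forgetting the third vector $\textbf{c}$ gives a map $\Omega_{l,m}\to M_+$ (after rescaling $\textbf{a},\textbf{b}$ by $1/\sqrt2$), and the known connectedness of $M_+$ (for $g=4$ OT-FKM focal submanifolds in the relevant range, and more generally as recorded after (\ref{M+})) together with connectedness of the ``fibre'' $\{\textbf{c}:\ (\textbf{a},\textbf{b},\textbf{c})\in\Omega_{l,m}\}$ over a fixed $(\textbf{a},\textbf{b})$ will give the result. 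The dimension hypotheses ($m=1,n\geq 4$, or $m\geq 2,n\geq 3$) are exactly what is needed to guarantee both that $\Omega_{l,m}\neq\varnothing$ and that this fibre, which is an intersection of a unit sphere in $\mathbb{R}^l$ with the linear subspace orthogonal to $\textbf{a},\textbf{b},E_i\textbf{a},E_i\textbf{b}$, has dimension $\geq 1$, hence is a sphere of positive dimension and therefore connected.

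In more detail, the steps I would carry out are the following. \emph{Step 1:} Fix $(\textbf{a},\textbf{b})\in M_+$ (rescaled) and analyze the set $W_{\textbf{a},\textbf{b}}$ of admissible $\textbf{c}$'s. The conditions on $\textbf{c}$ are: $|\textbf{c}|=1$, $\langle\textbf{c},\textbf{a}\rangle=\langle\textbf{c},\textbf{b}\rangle=0$, $\langle\textbf{c},E_i\textbf{a}\rangle=0$ and $\langle\textbf{c},E_i\textbf{b}\rangle=0$ for all $i$ (using skew-symmetry of $E_i$ to rewrite $\langle\textbf{b},E_j\textbf{c}\rangle$ and $\langle\textbf{c},E_k\textbf{a}\rangle$). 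This is the unit sphere of the orthogonal complement of $\mathrm{span}\{\textbf{a},\textbf{b},E_i\textbf{a},E_i\textbf{b}:i=1,\dots,m-1\}$; one checks using the Clifford relations and $(\textbf{a},\textbf{b})\in M_+$ that these $2m$ vectors are orthonormal (up to scaling), so $W_{\textbf{a},\textbf{b}}\cong S^{l-2m-1}$, which is connected precisely when $l-2m-1\geq 1$, i.e. $l\geq 2m+2$; substituting $l=n\delta(m)$ and checking the table gives exactly the stated ranges of $(m,n)$. \emph{Step 2:} Show the projection $p:\Omega_{l,m}\to M_+$, $(\textbf{a},\textbf{b},\textbf{c})\mapsto\tfrac1{\sqrt2}(\textbf{a},\textbf{b})$, is a fibre bundle (or at least a surjective submersion with connected fibres), e.g. by producing local sections via Gram–Schmidt-type formulas that depend smoothly on $(\textbf{a},\textbf{b})$; this is legitimate here, unlike for the octonionic $\pi$, because the linear-algebraic constraints vary smoothly. \emph{Step 3:} Invoke connectedness of the base $M_+$. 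For $m=8$ definite, $M_+$ is the focal submanifold of an honest $g=4$ isoparametric family (or a limiting degenerate case), which is connected; alternatively one can prove connectedness of $M_+$ directly by an induction on $n$, peeling off one copy of $\delta(m)$ at a time, the base case $n$ small being handled by hand (and here the explicit point $A$ of Remark \ref{Omega}(4) is available as an anchor). A fibration with connected base and connected fibres has connected total space, and a connected manifold is path-connected; this yields the theorem, and the last sentence about $V_3(\mathbb{O}^n)$ follows from Remark \ref{Omega}(4).

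The main obstacle, I expect, is Step 3 together with the non-emptiness bookkeeping: establishing path-connectedness of $M_+$ (equivalently, of $\Omega_{l,m}$ after quotienting out the $\textbf{c}$-sphere) in the exact stated range, uniformly over the definite/indefinite cases and the small values of $n$ where no isoparametric family exists. The clean way around this is the inductive ``stabilization'' argument: one shows that the inclusion $\Omega_{l,m}\hookrightarrow\Omega_{l+\delta(m),m}$ (adding a block) induces a surjection on $\pi_0$, reducing everything to an explicit check for the smallest admissible $n$, where one can either write down a path by hand or use the homogeneous-space description available in low dimensions. A secondary technical point is verifying, in Step 1, that $\{\textbf{a},\textbf{b},E_i\textbf{a},E_i\textbf{b}\}$ really are linearly independent for every $(\textbf{a},\textbf{b})\in M_+$ — this uses $\langle\textbf{a},E_i\textbf{b}\rangle=0$, $E_i^2=-\mathrm{Id}$, $E_iE_j=-E_jE_i$, and $\langle\textbf{a},\textbf{b}\rangle=0$, and is a short but essential computation underpinning the fibre-bundle structure in Step 2.
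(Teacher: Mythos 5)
Your overall strategy --- project $\Omega_{l,m}\to W_{l,m}\cong M_+$ by forgetting $\textbf{c}$, then combine connectedness of the base with connectedness of sphere fibres --- is the same as the paper's, but your Steps 1--2 rest on a claim that is false, and it is exactly the point where the real work lies. You assert that for \emph{every} $(\textbf{a}, \textbf{b})\in M_+$ the $2m$ vectors $\textbf{a}, E_1\textbf{a},\cdots,E_{m-1}\textbf{a},\textbf{b},E_1\textbf{b},\cdots,E_{m-1}\textbf{b}$ are linearly independent (even orthonormal), so that the fibre is always $S^{l-2m-1}$ and $p$ is a fibre bundle. The defining equations of $M_+$ only give $\langle\textbf{a},\textbf{b}\rangle=\langle\textbf{a},E_i\textbf{b}\rangle=0$; the quantities $\langle E_i\textbf{a},E_j\textbf{b}\rangle=-\langle\textbf{a},E_iE_j\textbf{b}\rangle$ for $i\neq j$ are unconstrained, and the vectors can indeed become dependent. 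Lemma \ref{degenerate} characterizes precisely when this happens, and the degenerate locus is nonempty in the cases of main interest: for $m\geqslant 3$ it occurs, and for $m=8$ (definite) the point of Remark \ref{Omega}(4) lies in $V_3(\mathbb{O}^2)$, i.e. $l=16=2m$, which forces the $16$ vectors above to be linearly dependent at the corresponding $(\textbf{a},\textbf{b})$; padding with zeros produces such points inside $\Omega_{8n,8}=V_3(\mathbb{O}^n)$ for every $n\geqslant 3$. (Lemma \ref{map omega to M+} and Proposition \ref{indefinite 4} exhibit the same phenomenon for $m=4$ indefinite, where \emph{all} points in the image of $\phi$ are degenerate.) Hence the fibre dimension of your $p$ jumps over the degenerate locus, $p$ is not a fibre bundle, and the ``short but essential computation'' you defer at the end cannot be carried out; your claim happens to hold only for $m=1,2$.

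This jumping of fibres is what the paper's proof is organized around: it restricts $\phi'$ to the open set $U=\{(\textbf{a},\textbf{b})\in W_{l,m}~|~\det g(\textbf{a},\textbf{b})\neq 0\}$, over which Lemma \ref{fibration} does give a sphere bundle; it proves $U$ is dense and, for $n\geqslant 3$, path-connected, via the general-position Lemma \ref{connectedness} applied to the explicit parametrization of the degenerate set furnished by Lemma \ref{degenerate} (this is Lemma \ref{U}); and it then connects any point of $\Omega_{l,m}$ lying over $W_{l,m}\setminus U$ into the connected set $\phi'^{-1}(U)$ by explicit curves. Your fallback suggestion (a stabilization $\Omega_{l,m}\hookrightarrow\Omega_{l+\delta(m),m}$ inducing a surjection on $\pi_0$) is only sketched and does not by itself address the degenerate fibres, so as it stands the proposal has a genuine gap at its central step; note also that connectedness of $M_+$ itself (your Step 3, which you identified as the main obstacle) is not the issue --- the paper simply quotes it --- whereas the degenerate locus, which you treat as a routine verification, is.
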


Furthermore, for the triviality of the tangent bundle of the smooth manifold $\Omega_{l,m}$, we have

\begin{thm}\label{para}
For $m>1$, odd or $m\equiv 0~(\mathrm{mod}~4)$, $\Omega_{l,m}$ is parallelizable. In particular, $V_3(\mathbb{O}^n)$ is parallelizable.
\end{thm}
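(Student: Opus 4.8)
The plan is to upgrade the stable parallelizability that is already in hand to honest parallelizability by destabilizing the tangent bundle. By Theorem \ref{omega} the manifold $\Omega_{l,m}=F^{-1}(0)$ has trivial normal bundle in $\mathbb{R}^{3l}$, so $T\Omega_{l,m}\oplus\varepsilon^{3m+3}\cong\varepsilon^{3l}$; hence $\Omega_{l,m}$ is a closed (Theorem \ref{omega}) connected (Theorem \ref{path}, in the stated range) $\pi$-manifold. Put $d:=\dim\Omega_{l,m}=3(l-m-1)$. Stable triviality says that the classifying map $\Omega_{l,m}\to BSO(d)$ becomes null after composition with $BSO(d)\to BSO(d+1)$, hence lifts to the homotopy fibre $S^{d}$, and $T\Omega_{l,m}$ is trivial exactly when the degree of such a lift lies in the indeterminacy, the image of $\pi_{d}SO(d+1)\to\pi_{d}S^{d}$, which for our (large) $d$ is $\{0\}$ when $d$ is even and $2\mathbb{Z}$ when $d$ is odd. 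So the sole destabilization obstruction is an integer read modulo this indeterminacy: it is essentially $\chi(\Omega_{l,m})$ when $d$ is even, and only a $\mathbb{Z}/2$ class when $d$ is odd.

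The geometric input I would exploit is a free circle action. Representing a point of $\Omega_{l,m}$ by the $3\times l$ matrix $M$ with rows $\mathbf a,\mathbf b,\mathbf c$, the defining relations become $MM^{T}=I_{3}$, $ME_{i}M^{T}=0$, so left multiplication $M\mapsto gM$ by $g\in SO(3)$ preserves $\Omega_{l,m}$; this action is \emph{free}, because the rows of $M$ span a $3$-plane in $\mathbb{R}^{l}$. In particular the circle $S^{1}\subset SO(3)$ acts freely and $\chi(\Omega_{l,m})=0$. When the commutant of $\{E_{1},\dots,E_{m-1}\}$ on $\mathbb{R}^{l}$ contains $\mathbb{H}$ (precisely for $m\equiv3,4,5\pmod 8$), right multiplication by the circle $S^{1}\subset Sp(1)$ inside that commutant is a second free circle action, commuting with the $SO(3)$-action; and for $m=8$, where $\Omega_{8n,8}\cong V_{3}(\mathbb{O}^{n})$ with $\{E_{1},\dots,E_{7}\}$ definite as in Remark \ref{Omega}(4), right multiplication of every octonionic entry by $e^{\theta e_{1}}$ is, by Artin's associativity theorem, a genuine free $S^{1}$-action, again commuting with $SO(3)$. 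In all these cases, since the commuting circle acts on $\mathbb{R}^l$ by fixed‑point‑free linear maps (multiplication by a unit in a division algebra), the $SO(3)$-action descends to a locally free action on the orbit space.

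If $m>1$ is odd, then $\delta(m)$ is even, so $l$, and hence $d=3(l-m-1)$, is even. It is classical that a closed connected stably parallelizable manifold of even dimension is parallelizable if and only if its Euler characteristic vanishes, the Euler class being the only obstruction surviving the discussion of the first paragraph; since $\chi(\Omega_{l,m})=0$ this case is complete.

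If $m\equiv0\pmod 4$ — which includes the headline case $m=8$, $V_{3}(\mathbb{O}^{n})$ — then $d$ is odd, $\chi=0$ is automatic but no longer decisive, and the surviving obstruction is only a $\mathbb{Z}/2$ class (the mod $2$ degree of the lift above; the Kervaire semicharacteristic when $d\equiv1\pmod4$). Here I would use one of the commuting free circles $S^{1}$ of the second paragraph to form the principal circle bundle $\Omega_{l,m}\to B:=\Omega_{l,m}/S^{1}$, with $B$ of even dimension $d-1$ and $T\Omega_{l,m}\cong\pi^{*}TB\oplus\varepsilon^{1}$. Then $\pi^{*}TB$ is stably trivial; one argues that the stable framing descends, so that $B$ is again a $\pi$-manifold, and that $\chi(B)=0$ because the $SO(3)$-action, commuting with $S^{1}$, descends to $B$ with finite stabilizers (so any nonzero element of $\mathfrak{so}(3)$ yields a nowhere-zero vector field on $B$). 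By the even case $B$ is parallelizable, hence so is $\Omega_{l,m}$; for $m=4$ this reduction is exactly the circle-bundle structure recorded in Propositions \ref{m=4} and \ref{indefinite 4}. The main obstacle is precisely this odd-dimensional step: there is no single numerical characteristic class controlling the obstruction, so one must genuinely produce the commuting free circle — for $m=8$ extracting it from the non-associative octonionic multiplication rather than from a linear commutant — and verify that the quotient $B$ is a stably parallelizable manifold of vanishing Euler characteristic. Granting that, everything funnels back into the even-dimensional statement, and since $8\equiv0\pmod 4$ we conclude in particular that $V_{3}(\mathbb{O}^{n})$ is parallelizable.
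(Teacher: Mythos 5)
Your first half ($m>1$ odd) is correct and essentially the paper's argument: stable parallelizability from the trivial normal bundle, vanishing Euler characteristic from the free $SO(3)$ (or circle) action, and the Kervaire--Adams criterion in even dimensions. The gap is in the case $m\equiv 0\pmod 4$, which contains the headline case $V_3(\mathbb{O}^n)$, and it is twofold. First, the commuting free circle you need does not exist in the form you propose when $m\equiv 0\pmod 8$: there the commutant of $\{E_1,\dots,E_{m-1}\}$ is only $\mathbb{R}$, and the suggested substitute for $m=8$ --- right multiplication of every octonionic entry by $e^{\theta e_1}$ --- does not even preserve $V_3(\mathbb{O}^n)$. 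Artin's theorem only makes the subalgebra generated by \emph{two} octonions associative, while the condition $\sum_t a_t\overline{b_t}=0$ involves three independent elements $a_t, b_t, e^{\theta e_1}$. Concretely, for the point $A=\frac{1}{\sqrt2}\left(\begin{smallmatrix}e_7&e_3\\ e_2&-e_6\\ 1&e_4\end{smallmatrix}\right)\in V_3(\mathbb{O}^2)$ of Remark \ref{Omega}(4), right multiplication by $e_1$ sends the rows $\textbf{a},\textbf{c}$ to $\frac{1}{\sqrt2}(-e_6,e_2)$ and $\frac{1}{\sqrt2}(e_1,-e_5)$, whose octonionic product $\sum_t (a_te_1)\overline{(c_te_1)}=\frac12(e_6e_1+e_2e_5)=e_7\neq 0$; so the rotated frame leaves $V_3(\mathbb{O}^n)$. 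Second, even where a commuting free circle does exist ($m\equiv 4\pmod 8$, via the quaternionic commutant), your step ``the stable framing descends, so $B=\Omega_{l,m}/S^1$ is again a $\pi$-manifold'' is not justified and is false for free circle quotients in general: the Hopf fibrations $S^{2n+1}\to\mathbb{CP}^n$ show that stable parallelizability of the total space does not pass to the quotient. So the odd-dimensional case, and with it the assertion about $V_3(\mathbb{O}^n)$, is not established by your argument.

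The repair is to stay on $\Omega_{l,m}$ itself and compute the surviving $\mathbb{Z}/2$ obstruction directly, which is what the paper does: for $m\equiv 0\pmod 4$ one has $\dim\Omega_{l,m}\equiv 1\pmod 4$, and by Kervaire--Adams a stably parallelizable closed manifold of such dimension is parallelizable if and only if the mod $2$ semicharacteristic $\hat{\chi}_2$ vanishes. The Lusztig--Milnor--Peterson formula gives $\kappa-\hat{\chi}_2=w_2\cup w_{4p-1}=0$ since all Stiefel--Whitney classes vanish, and Atiyah's theorem (a closed manifold whose tangent bundle contains a trivial rank-$2$ sub-bundle has $\kappa=0$) applies because the free $SO(3)$-action you already identified furnishes a trivial rank-$3$ sub-bundle; hence $\hat{\chi}_2=0$ and $\Omega_{l,m}$ is parallelizable without ever forming a quotient.
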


The present paper is organized as follows. In Section \ref{sec2}, we give a proof of Theorem \ref{fiber}. In Section \ref{sec3}, we focus on the topology and geometry of $\Omega_{l,m}$, and give a proof of Proposition \ref{onto}. In order to generalize Corollary \ref{v3ok}  and solve completely Question (2) of James,
we investigate in the last section the set of critical or regular points in $V_k(\mathbb{O}^n)$ of $F$ defined by (\ref{Fk}).

\section{Proof of Theorem \ref{fiber}}\label{sec2}
In this section, we give a proof to
\vspace{3mm}

\noindent
\textbf{Theorem \ref{fiber}}
\emph{\,\, The projection $\pi: V_{k+1}(\mathbb{O}^n)\rightarrow V_k(\mathbb{O}^n)$ $(n>k\geqslant 2)$ is not a fibration in the sense of Serre.}
\vspace{2mm}

\begin{proof}
At the first step, define $A=\frac{1}{\sqrt{2}}\left(\begin{matrix}e_7 &~~e_3\\ e_2 &-e_6\end{matrix}\right)\in M_{2\times 2}(\mathbb{O})$, which will play an important role in our proof.
Recalling the Cayley-Dickson construction of the product in $\mathbb{O}\cong \mathbb{H}\oplus\mathbb{H}$:
\begin{eqnarray*}
\mathbb{O}\times \mathbb{O}&\longrightarrow&\mathbb{O}\\
(a,b),~(c,d)&\mapsto&(a,b)(c,d) := (ac-\bar{d}b, ~ da+b\bar{c}),
\end{eqnarray*}
we choose them in $\mathbb{H}\oplus\mathbb{H}$ as follows
\begin{eqnarray}\label{basis}
&&~1=(1, 0),~~e_1=(\mathrm{i}, 0), ~~e_2=(\mathrm{j}, 0), ~~e_3=(\mathrm{k}, 0), \\
&&e_4=(0, 1), ~~e_5=(0, \mathrm{i}), ~~e_6=(0, \mathrm{j}), ~~e_7=(0, \mathrm{k}),\nonumber
\end{eqnarray}
and see that $e_7e_2=e_3e_6=e_5$, \,$e_7e_3=-e_2e_6=e_4$. Then a
direct computation leads to
$$A\overline{A}^t=I_2,\quad \overline{A}^tA=\left(\begin{matrix}~1& -e_4\\ ~e_4&~~1\,\end{matrix}\right),$$
which suggests that the following equations for $x,y\in \mathbb{O}$
\begin{equation}\label{A eq}
A\left(\begin{matrix}x\\ y\end{matrix} \right)=\left(\begin{matrix}0\\ 0\end{matrix} \right).
\end{equation}
might have a non-zero solution.
In fact, writing $x=(x_1,x_2)$, $y=(y_1, y_2)\in \mathbb{H}\oplus\mathbb{H}$, we deduce easily the relations
$$y_1=-\mathrm{j}\overline{x}_2\mathrm{j}=-\mathrm{k}\overline{x}_2\mathrm{k},  \quad
y_2=\mathrm{j}\overline{x}_1\mathrm{j}=\mathrm{k}\overline{x}_1\mathrm{k}, $$
which implies $\mathrm{i}x_1=x_1\mathrm{i}$, $\mathrm{i}x_2=x_2\mathrm{i}$, thus $x_1=-y_2:=\alpha\in \mathbb{C}$, $x_2=y_1:=\beta\in\mathbb{C}$. Therefore, the solutions to the equations (\ref{A eq}) can be expressed as
$$\left(\begin{matrix}x\\ y\end{matrix} \right)=\left(\begin{matrix}(\alpha, ~~\beta)\\ (\beta, -\alpha)\end{matrix} \right).$$

Next, recalling the definition of $V_k(\mathbb{O}^n)$ by James in (\ref{james}),
we rewrite here the projection $\pi$ for $n>k\geqslant 2$ as follows
\begin{eqnarray*}
\pi: ~~~V_{k+1}(\mathbb{O}^n)~~~&\longrightarrow&~~~ V_k(\mathbb{O}^n)\\
\left(\begin{matrix}a_1\\ ~\vdots\\ a_{k+1}\end{matrix} \right)&\mapsto& \left(\begin{matrix}a_{2}\\~\vdots\\ a_{k+1}\end{matrix} \right).
\end{eqnarray*}

Choose a point $z_0=
(I_k, \textbf{0}_{k\times (n-k)})\in V_k(\mathbb{O}^n)$. It is not hard to see
$$\pi^{-1}(z_0)= \left\{\left(\begin{array}{c|c} \textbf{0}_{1\times k}& w_{1\times(n-k)}\\
\hline
I_k&\textbf{0}_{k\times (n-k)}\end{array} \right) \vline~ |w|=1\right\}.$$
That is,
$\pi^{-1}(z_0)\cong V_1(\mathbb{O}^{n-k})\cong S^{8(n-k)-1}$.

On the other hand, choose another point $z_0'=\begin{pmatrix}\left(\begin{matrix} A&\\
&I_{k-2} \end{matrix}\right), ~~\textbf{0}_{k\times (n-k)}\end{pmatrix}\in V_k(\mathbb{O}^n)$. We find
$$\pi^{-1}(z_0')=\left\{\left(\begin{array}{c|c} {\begin{array}{ccccc}\overline{x} &\overline{y} &
0&\cdots &0
\end{array}}
&
z_{1\times(n-k)}\\
\hline
{\begin{array}{cc} A&\\
&\qquad I_{k-2} \end{array}}
&\textbf{0}_{k\times (n-k)}\end{array} \right)
\vline \begin{array}{ll}
|x|^2+|y|^2+|z|^2=1,\\
(x, y)^t ~\text{is~ a~ solution~ to} ~(\ref{A eq})
\end{array} \right\}.$$
Since the space of solutions to the equation (\ref{A eq}) is of dimension $4$ as we saw before, the fiber $\pi^{-1}(z_0')$ is clearly homeomorphic to $S^{8(n-k)+3}$.

Therefore, for the two distinct points $z_0, z_0'\in V_k(\mathbb{O}^n)$, the corresponding fibers $\pi^{-1}(z_0)$ and $\pi^{-1}(z_0')$ are not homotopy equivalent to each other.
At last, since $V_2(\mathbb{O}^2)$ is a path-connected manifold as an $S^7$-bundle over $S^{15}$ (cf. \cite{Ja58}), there is a path in $V_k(\mathbb{O}^n)$ connecting these two points $z_0, z_0'\in V_k(\mathbb{O}^n)$.

Consequently, we can draw the conclusion that the projection $\pi: V_{k+1}(\mathbb{O}^n)\rightarrow V_k(\mathbb{O}^n)$ $(n>k\geqslant 2)$ is not a fibration in the sense of Serre,
whether the base space $V_k(\mathbb{O}^n)$ is path-connected or not.
\end{proof}



\section{The topology and geometry of $\Omega_{l,m}$}\label{sec3}

To study the topology of $\Omega_{l,m}$, we need to recall some properties of the focal submanifold $M_+$ of the OT-FKM type. It is well known that the isoparametric family of OT-FKM type exists if and only if $l-m-1\geqslant 1$. And when it exists, the focal submanifold $M_+$ is connected.
However, when $l-m-1=0$, even though the isoparametric family of OT-FKM type does not exist, the topological space $M_+$ defined in the same expression as above exists, but it is not connected. In fact, $l-m-1=0$ if and only if $(l, m)=(2, 1)$, $(4, 3)$, or $(8, 7)$. In these three cases, $M_+$ is $S^1\sqcup S^1$, $S^3\sqcup S^3$, or $S^7\sqcup S^7$, respectively, with two connected components (cf. \cite{FKM81}).

When $l-m-1\leqslant 0$, $\Omega_{l,m}$ is empty. Otherwise, it would follow from the definition (\ref{def omega}) that $\dim \textrm{Span}\{\textbf{a}, $ $E_1\textbf{a},\cdots, E_{m-1}\textbf{a}, \textbf{b}, \textbf{c}\}=m+2\geqslant l+1$, which contradicts the fact that $\dim \textrm{Span}\{\textbf{a}, $ $E_1\textbf{a},\cdots, E_{m-1}\textbf{a}, \textbf{b}, \textbf{c}\}\leqslant l$.

When $l-m-1=1$, \emph{i.e.}, $(l, m)=(3, 1)$, $(4, 2)$, or $(8, 6)$. In these three cases, $\Omega_{3,1}=O(3)$, $\Omega_{4, 2}$ and $\Omega_{8, 6}$ are empty. For clarity, we explain that for $\Omega_{4, 2}$. Observe that $\textrm{Span}\{\textbf{a}, $ $E_1\textbf{a}, \textbf{b}, \textbf{c}\}=\mathbb{R}^4$. But $E_1\textbf{b}\in \mathbb{R}^4$, evidently contradicts the definition of $\Omega_{4, 2}$. As for $\Omega_{8,6}$, one can also use a discussion by linear algebra.

When $l-m-1\geqslant m$, we see that $\Omega_{l,m}$ is non-empty. In this case, $l-m-1\geqslant m\geqslant 1$, the isoparametric family exists and there exist $\textbf{a}, \textbf{b}\in \mathbb{R}^l$ such that the point $\frac{1}{\sqrt{2}}(\textbf{a,b})$ lies in the focal submanifold $M_+$. Notice that the space $X=\textrm{Span}\{\textbf{a}, E_1\textbf{a}, ,\cdots, E_{m-1}\textbf{a}, \textbf{b}, E_1\textbf{b},\cdots, E_{m-1}\textbf{b}\}$ is a $2m$ dimensional subspace of $\mathbb{R}^l$. In the current case $2m\leqslant l-1<l$, there exists $\textbf{c}\in\mathbb{R}^l$ perpendicular to the space $X$, which reveals $\Omega_{l,m}\neq \varnothing$.

Now we are ready to give a proof of Theorem \ref{omega}.

\vspace{3mm}
\noindent
\textbf{Theorem \ref{omega}.}\,\,\,\emph{When it is not empty, $\Omega_{l, m}\subset \mathbb{R}^{3l}$ is a closed regular submanifold of dimension $3(l-m-1)$ with trivial normal bundle.}
\vspace{2mm}

\begin{proof}
On $\mathbb{R}^{3l}$, we consider the following $3m+3$ functions
\begin{eqnarray}\label{3m+3}
&&\omega_1=|\textbf{a}|^2-1, \quad\, \omega_2=|\textbf{b}|^2-1, \quad\,\,\, \omega_3=|\textbf{c}|^2-1,\nonumber\\
&& f_0=\langle \textbf{a}, \textbf{b} \rangle, ~~\qquad g_0=\langle \textbf{b}, \textbf{c} \rangle,~~\,\qquad h_0=\langle \textbf{c}, \textbf{a} \rangle, \qquad\\
&& f_i=\langle \textbf{a}, E_i\textbf{b} \rangle, ~~\,\quad g_j=\langle \textbf{b}, E_j\textbf{c} \rangle,~~\quad h_k=\langle \textbf{c}, E_k\textbf{a} \rangle, \nonumber
\end{eqnarray}
where $i, j, k=1,\cdots, m-1$. A direct calculation yields the Euclidean gradient vectors as follows
\begin{eqnarray}\label{gradients}
\nabla \omega_1&=&(~ 2\textbf{a}, \qquad 0, \,\,~\qquad 0 ~),\nonumber\\
\nabla \omega_2&=&(~~\,0,\qquad 2\textbf{b}, ~\qquad 0~),\nonumber\\
\nabla \omega_3&=&(~~\,0, ~\qquad 0, \qquad 2\textbf{c}~),\nonumber\\
\nabla f_0&=&(~~\,\textbf{b}, ~\qquad \textbf{a}, ~\qquad 0~),\\
\nabla f_i&=&( E_i\textbf{b}, \quad -E_i\textbf{a}, ~\quad 0~ ), ~~\quad i=1,\cdots, m-1,\nonumber\\
\nabla g_0&=&(~~\,0, ~~\qquad \textbf{c}, ~\qquad \textbf{b}~),\nonumber\\
\nabla g_j&=&(~~\,0, ~\,~\quad E_j\textbf{c}, \,\,\, -E_j\textbf{b} ), \quad j=1,\cdots, m-1,\nonumber\\
\nabla h_0&=&(~~\,\textbf{c}, ~~\qquad 0, ~~\qquad \textbf{a}~),\nonumber\\
\nabla h_k&=&(-E_k\textbf{c}, \quad 0, ~~~\,\,\quad E_k\textbf{a} ), \quad k=1,\cdots, m-1.\nonumber
\end{eqnarray}
For convenience, arrange the above $3m+3$ functions in order and denote them by ${F_1},\cdots,{F_{3m+3}}$, respectively. Then we have the following smooth map
\begin{equation}\label{F}
F: \mathbb{R}^{3l}\longrightarrow \mathbb{R}^{3m+3},\\
\end{equation}
so that $\Omega_{l,m}=F^{-1}(0)$. We will show that $dF$ is surjective at any point $(\textbf{a}, \textbf{b}, \textbf{c}) \in F^{-1}(0)$, hence $\Omega_{l,m}$ is a smooth manifold of dimension $3(l-m-1)$.

Observing that $|\nabla F_i|^2=4$ or $2$ and $\langle \nabla F_i, \nabla F_j\rangle =0$, $\forall ~i=1,2,3,4,m+4, 2m+4$ and $j\neq i$, it suffices to show
$$\nabla f_1, \cdots, \nabla f_{m-1}, \nabla g_1, \cdots, \nabla g_{m-1}, \nabla h_1, \cdots, \nabla h_{m-1}$$
are linearly independent at any point $(\textbf{a}, \textbf{b}, \textbf{c})\in \Omega_{l,m}$.

Let $G=G_{(3m-3)\times (3m-3)}$ be the positive semidefinite Gram
matrix defined by the inner products of the vectors $\nabla f_1, \cdots, \nabla f_{m-1}, \nabla g_1, \cdots, \nabla g_{m-1}, \nabla h_1, \cdots, \nabla h_{m-1}$. Define $\Phi_{\textbf{ab}}$ to be an $(m-1)\times (m-1)$ matrix with elements $(\Phi_{\textbf{ab}})_{ij}=\langle E_i\textbf{a}, E_j \textbf{b}\rangle$. Notice that we have an important equality
$$(\Phi_{\textbf{ab}})_{ij}=-(\Phi_{\textbf{ba}})_{ij}$$
 on $\Omega_{l.m}$ whenever $i=j$ or $i \neq j$, using the properties of Clifford matrices $E_1,\cdots, E_{m-1}$. 
Thus, $\Phi_{\textbf{ab}}=-\Phi_{\textbf{ba}}$, and $G$ can be decomposed as
\begin{equation*}
G=\left(\begin{array}{lll}
\,\,~~2I& -\Phi_{\textbf{ac}}&-\Phi_{\textbf{bc}}\\
-\Phi_{\textbf{ca}}& ~\,~2I& -\Phi_{\textbf{ba}}\\
-\Phi_{\textbf{cb}}& -\Phi_{\textbf{ab}}&~\,~2I
\end{array}\right)=
\left(\begin{array}{lll}
I&&\\
&I&\\
&&I\end{array}\right)+
\left(\begin{array}{lll}
I& \Phi_{\textbf{ca}}&\Phi_{\textbf{cb}}\\
\Phi_{\textbf{ac}}& ~I& \Phi_{\textbf{ab}}\\
\Phi_{\textbf{bc}}& \Phi_{\textbf{ba}}&~~I
\end{array}\right),
\end{equation*}
Hence, $G$ is the sum of a positive definite matrix and a positive semidefinite Gram matrix which is defined by the inner products of the vectors
$E_1\textbf{c},\cdots, E_{m-1}\textbf{c}, E_1\textbf{a}, $ $\cdots, E_{m-1}\textbf{a}, E_1\textbf{b},\cdots, E_{m-1}\textbf{b}$ in $\mathbb{R}^l$.
Therefore, $G$ is positive definite, and thus $dF$ is surjective at every point $(\textbf{a}, \textbf{b}, \textbf{c})\in \Omega_{l,m}=F^{-1}(0)$, as required. In fact, $\Omega_{l,m}$ is a closed smooth submanifold in $S^{3l-1}(\sqrt{3})$ .
\end{proof}

Next, 
we want to study the geometric properties of $\Omega_{l,m}$.
For instance, it is natural to ask if these submanifolds are minimal or austere in $S^{3l-1}(\sqrt{3})$. By definition, a submanifold $M$ of a Riemannian manifold $\widetilde{M}$ is \emph{austere}, if for each point $x\in M$ and each normal vector $\xi\in T_x^{\perp}M$, the set of eigenvalues of the shape operator $A_{\xi}$ is invariant (concerning multiplicities) under multiplication by $-1$. Clearly, an austere submanifold is automatically a minimal submanifold. As the first step in this respect, we show

\begin{prop}\label{m=4}
When $m=4$ and $\{E_1, E_2, E_3\}$ is in the definite case, $\Omega_{4n,4}\cong V_3(\mathbb{H}^n)\subset S^{12n-1}(\sqrt{3})$ is a minimal submanifold of codimension $14$. However, it is not austere.
\end{prop}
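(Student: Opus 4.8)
The plan is to reduce the computation to linear algebra on the ambient sphere $S^{3l-1}(\sqrt{3})$ using the fact, established in the proof of Theorem \ref{omega}, that the defining map $F$ is a submersion near $\Omega_{l,m}$ and that an orthonormal frame of the normal bundle is obtained by Gram--Schmidt from the gradient vectors $\nabla\omega_1,\nabla\omega_2,\nabla\omega_3,\nabla f_0,\dots,\nabla h_{m-1}$. For minimality, first I would observe that $\Omega_{l,m}$ sits inside $S^{3l-1}(\sqrt{3})$, so it suffices to show it is minimal there; equivalently, that the mean curvature vector of $\Omega_{l,m}\subset\mathbb{R}^{3l}$ is everywhere radial. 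The standard tool is: for a level set $F^{-1}(0)$ with normal frame $\{\nu_\alpha\}$, the mean curvature vector is $H=-\sum_\alpha\bigl(\operatorname{tr}(\operatorname{Hess} G_\alpha|_{T\Omega})/\,$dim$\bigr)\nu_\alpha$ once the $G_\alpha$ are arranged so their gradients are orthonormal; since every defining function here is a quadratic form $x\mapsto\langle B_\alpha x,x\rangle$ with $B_\alpha$ a symmetric $3l\times 3l$ matrix, its Hessian is the constant matrix $2B_\alpha$, and the computation collapses to evaluating $\operatorname{tr}(B_\alpha)$ and the pairwise "correction" terms coming from the non-orthonormality of the raw gradients. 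I expect each $B_\alpha$ to be traceless or to contribute only a radial term, giving minimality of $\Omega_{4n,4}=V_3(\mathbb{H}^n)$ of codimension $3m+3-3=3m+2=14$.

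The cleanest route, though, is probably to exploit homogeneity: for $m=4$ in the definite case one has $\Omega_{4n,4}\cong V_3(\mathbb{H}^n)\cong\mathrm{Sp}(n)/\mathrm{Sp}(n-3)$, and the embedding into $S^{12n-1}(\sqrt{3})$ described by \eqref{def omega} is equivariant under the isometric $\mathrm{Sp}(n)$-action (acting diagonally on the three $\mathbb{H}^n=\mathbb{R}^{4n}$ factors, commuting with $E_1,E_2,E_3$ because we are in the definite case). Since $\mathrm{Sp}(n)$ acts transitively on $V_3(\mathbb{H}^n)$ with isotropy a subgroup whose isotropy representation has no trivial summand meeting the normal space appropriately, the mean curvature vector is an $\mathrm{Sp}(n)$-invariant normal field; invariance plus transitivity forces $H$ to be parallel in the normal connection and, combined with the fact that the position vector is the only invariant normal direction available (the normal bundle decomposes into non-trivial isotropy summands except for the radial line), one concludes $H$ is radial, i.e. $\Omega_{4n,4}$ is minimal in $S^{12n-1}(\sqrt{3})$. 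I would present the quadratic-form computation as the rigorous argument and mention the homogeneity heuristic for motivation.

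For the failure of austerity, the strategy is to exhibit one point $p\in V_3(\mathbb{H}^n)$ and one unit normal vector $\xi\in T_p^\perp\Omega_{4n,4}$ for which the shape operator $A_\xi$ has a spectrum not symmetric about $0$. Concretely, pick the base point $p=(\mathbf a,\mathbf b,\mathbf c)$ corresponding to the first three standard quaternionic basis vectors $(e_1,e_2,e_3$-type frame$)$ in $\mathbb{H}^n$, compute the normal frame there by Gram--Schmidt on the gradients in \eqref{gradients}, and then take $\xi$ to be (the normalization of) $\nabla\omega_1-\nabla\omega_2$ or one of the $\nabla f_0$-type vectors; the shape operator along $\xi$ is, up to the tangential projection, the restriction to $T_p\Omega$ of the constant symmetric endomorphism $2B_\xi$ minus a multiple of the identity coming from the sphere. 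One then either computes $\operatorname{tr}(A_\xi^3)\neq 0$ on $T_p\Omega$, or more simply exhibits an eigenvalue whose negative does not occur with equal multiplicity — for instance showing $A_\xi$ has a nonzero eigenvalue on a subspace of one dimension while $-$that eigenvalue occurs on a subspace of a different dimension, which is forced by a parity/dimension count once $n$ is fixed (the quaternionic structure makes most eigenspaces come in multiples of $4$, but the "diagonal" directions among $\mathbf a,\mathbf b,\mathbf c$ break this).

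The main obstacle I anticipate is bookkeeping: choosing the normal vector $\xi$ and the point $p$ so that the shape-operator matrix is small enough to diagonalize by hand yet generic enough that its spectrum is visibly asymmetric. The orthonormalization of the $3m+3=15$ raw gradients is nontrivial because $\nabla f_0,\nabla g_0,\nabla h_0$ are not orthogonal to each other in general (they pairwise involve $\mathbf a,\mathbf b,\mathbf c$), so the Gram matrix $G$ from Theorem \ref{omega} genuinely intervenes and the second fundamental form picks up cross terms; controlling these is where the real work lies. I would try to sidestep most of it by working at a maximally symmetric point where $\langle\mathbf a,\mathbf b\rangle=\langle\mathbf b,\mathbf c\rangle=\langle\mathbf c,\mathbf a\rangle=0$ and the $E_i\mathbf a,E_i\mathbf b,E_i\mathbf c$ are mutually orthogonal, so that $G=4I$ and the normal frame is simply the rescaled gradients — then $A_\xi$ for $\xi=\tfrac1{\sqrt2}(\nabla f_0)$ (say) is an explicit block matrix built from $I_n$ and the $E_i$'s restricted to the tangent space, and its eigenvalues $\pm\tfrac1{\sqrt2}$ appear with unequal multiplicities because the tangent space splits asymmetrically under the pair $(\mathbf a,\mathbf b)$ versus $\mathbf c$. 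Verifying that inequality of multiplicities is the crux and the only place a short honest computation is unavoidable.
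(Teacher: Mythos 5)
Your minimality argument is in substance the paper's. In the definite case $\Phi_{\mathbf{ab}}=\Phi_{\mathbf{bc}}=\Phi_{\mathbf{ca}}=0$, so the raw gradients are already mutually orthogonal (no Gram--Schmidt is needed beyond rescaling and recombining $\nabla\omega_1,\nabla\omega_2,\nabla\omega_3$ to split off the position vector), every unit normal is of the form $xA$ for a constant symmetric matrix $A$, and the trace of the second fundamental form along $xA$ equals $-\mathrm{Trace}\,A+\frac13\langle xA,x\rangle+\sum_\alpha\langle \xi_\alpha A,\xi_\alpha\rangle$; the "correction terms" come from restricting the trace to $T\Omega$, not from non-orthonormality, and the only nontrivial checks are $\sum_\alpha\langle\xi_\alpha A_{13},\xi_\alpha\rangle=\sum_\alpha\langle\xi_\alpha A_{14},\xi_\alpha\rangle=0$. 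Your homogeneity heuristic, however, is flawed as stated: the radial line is \emph{not} the only trivial summand of the normal isotropy representation, since $\xi_{13}=\frac{1}{\sqrt2}(\mathbf{a},0,-\mathbf{c})$ and $\xi_{14}=\frac{1}{\sqrt6}(\mathbf{a},-2\mathbf{b},\mathbf{c})$ are $\mathrm{Sp}(n)$-invariant normal fields, so equivariance alone does not force $H$ to be radial --- precisely these two components must be killed by the direct computation. (Also, the codimension in the sphere is $3m+3-1=14$, not $3m+3-3$.)

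The genuine gap is in your witness for non-austerity. At $x_0=I_3\in V_3(\mathbb{H}^3)$, identifying $T_{x_0}\Omega$ with quaternionic skew-Hermitian matrices $X$, a normal of the form $\xi=\frac{1}{\sqrt{c}}Px_0$ with $P$ constant symmetric gives $A_\xi X=-\frac{1}{2\sqrt{c}}\bigl(PX-\overline{PX}^t\bigr)$, and \emph{both} of your candidates have spectrum symmetric about $0$. For $\xi=\frac{1}{\sqrt2}(\mathbf{a},-\mathbf{b},0)$, i.e.\ $P=\mathrm{diag}(1,-1,0)$, the eigenvalues are $\pm\frac{1}{\sqrt2}$ (multiplicity $3$ each), $\pm\frac{1}{2\sqrt2}$ (multiplicity $4$ each) and $0$ (multiplicity $7$). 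For $\xi=\frac{1}{\sqrt2}\nabla f_0$, i.e.\ $P=\left(\begin{smallmatrix}0&1&0\\1&0&0\\0&0&0\end{smallmatrix}\right)$, the blocks $(\lambda_1,\lambda_2,\mathrm{Im}\,y)\mapsto(\mathrm{Im}\,y,\mathrm{Im}\,y,\frac{\lambda_1+\lambda_2}{2})$ and $(z,w)\mapsto(\frac{w}{2},\frac{z}{2})$ give exactly the same symmetric list $\pm\frac{1}{\sqrt2}$ $(\times 3)$, $\pm\frac{1}{2\sqrt2}$ $(\times 4)$, $0$ $(\times 7)$ --- not the unequal multiplicities of $\pm\frac{1}{\sqrt2}$ that you predict; the "asymmetric split of the tangent space" does not materialize. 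So neither choice disproves austerity. The paper's witness is $\xi_{14}=\frac{1}{\sqrt6}(\mathbf{a},-2\mathbf{b},\mathbf{c})$, whose diagonal asymmetry yields the spectrum $-\frac{1}{\sqrt6}$ $(\times 10)$, $\frac{1}{2\sqrt6}$ $(\times 8)$, $\frac{2}{\sqrt6}$ $(\times 3)$; you need a normal direction of this kind (a non-symmetric combination of the $\nabla\omega_i$ inside the invariant plane spanned by $\xi_{13},\xi_{14}$) for the final step to go through.
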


\begin{proof}
When $m=4$, $l=n\delta(m)=4n$. For $\textbf{a}=(a_1, a_2, \cdots, a_n)\in\mathbb{H}^n$, we define $\{E_1, E_2, E_3\}$ in the definite case as follows
\begin{eqnarray}\label{4 def}
E_1(\textbf{a})=~\mathrm{i}\textbf{a}&:=&(\mathrm{i}a_1, \mathrm{i}a_2,\cdots,\mathrm{i}a_{n}),\nonumber\\
E_2(\textbf{a})=~\mathrm{j}\textbf{a}&:=&(\mathrm{j}a_1, \mathrm{j}a_2,\cdots, \mathrm{j}a_{n}),\\
E_3(\textbf{a})=~\mathrm{k}\textbf{a}&:=&(\mathrm{k}a_1, \mathrm{k}a_2,\cdots, \mathrm{k}a_{n}).\nonumber
\end{eqnarray}
Defining the same $\omega_1$, $\omega_2$, $\omega_3, f_i, g_j, h_k$ $(i, j, k=0,\cdots, 3)$ as those in (\ref{3m+3}), we find
$$(\Phi_{\textbf{ab}})_{ij}=\langle E_i\textbf{a}, E_j \textbf{b}\rangle=0,$$
thus
$\Phi_{\textbf{ab}}=0$. Similarly, $\Phi_{\textbf{bc}}=\Phi_{\textbf{ca}}=0$. Equivalently speaking, the gradient vectors
$\nabla\omega_1$, $\nabla\omega_2$, $\nabla\omega_3, \nabla f_i, \nabla g_j, \nabla h_k$ $(i, j, k=0,\cdots, 3)$ are orthogonal to each other.

To show the minimality of $\Omega_{4n,4}$ in $S^{12n-1}(\sqrt{3})$, it suffices to prove that at any point
$x=(\textbf{a}, \textbf{b}, \textbf{c})\in \Omega_{4n,4}$,
$$\displaystyle \langle \sum_{i=1}^{12n-15}D_{e_i}e_i, \xi\rangle=0$$
for any normal vector $\xi$ of $\Omega_{4n,4}$ at $x$ in $S^{12n-1}(\sqrt{3})$, where $D$ is
the Levi-Civita connection of $\mathbb{R}^{12n}$, and $\{e_1, \cdots, e_{12n-15}\}$ is an orthonormal basis
of the tangent space of $\Omega_{4n,4}$ at $x$.
From the definition of $\Omega_{4n,4}$ and $\nabla f_i, \nabla g_j, \nabla h_k$ in (\ref{gradients}), we observe that $\frac{1}{\sqrt{2}}\nabla f_i, \frac{1}{\sqrt{2}}\nabla g_j, \frac{1}{\sqrt{2}}\nabla h_k$ $(i, j, k=0,\cdots, 3)$ are actually unit normal vector fields of $\Omega_{4n,4}$ in the sphere $S^{12n-1}(\sqrt{3})$, denoting them by $\xi_1,\cdots, \xi_{12}$ for short. To be specific, at a point $x=(\textbf{a}, \textbf{b}, \textbf{c})$, we represent $\frac{1}{\sqrt{2}}\nabla f_i$, $\frac{1}{\sqrt{2}}\nabla g_j$, $\frac{1}{\sqrt{2}}\nabla h_k$ as $xA_{i}^f$, $xA_j^g$ and $xA_k^h$ $(i, j, k=0,\cdots, 3)$, respectively, where
\begin{equation}\label{A}
\footnotesize{
A_0^f=\frac{1}{\sqrt{2}}\left(\begin{matrix}~~0&I_{4n}&0\\I_{4n}&0&0\\~~0&0&0\end{matrix}\right),~~
A_0^g=\frac{1}{\sqrt{2}}\left(\begin{matrix}0&0&0\\0&0&I_{4n}\\0&I_{4n}&0\end{matrix}\right),~~
A_0^h=\frac{1}{\sqrt{2}}\left(\begin{matrix}0&0&I_{4n}\\0&0&~~0\\I_{4n}&0&~~0\end{matrix}\right),}
\end{equation}
and
\begin{equation}\label{Ai}
\footnotesize{
A_i^f=\frac{1}{\sqrt{2}}\left(\begin{matrix}0&-\widetilde{D_i}&0\\\widetilde{D_i}&\,\,0&0\\0&\,\,0&0\end{matrix}\right),
A_i^g=\frac{1}{\sqrt{2}}\left(\begin{matrix}0&~~\,0&~~\,0\\0&~~\,0&-\widetilde{D_i}\\0&~~\widetilde{D_i}&~~\,\,0\end{matrix}\right),
A_i^h=\frac{1}{\sqrt{2}}\left(\begin{matrix}\,0&0&-\widetilde{D_i}\\\,0&0&\,\,0\\~~\widetilde{D_i}&0&\,\,0\end{matrix}\right),}
\end{equation}
where $\footnotesize{\widetilde{D_i}=\left(\begin{matrix}D_i&&\\&\ddots&\\&&D_i\end{matrix}\right)_{n\times n}}$  for $i=1,2,3$.
$D_1=  \left(\begin{smallmatrix}
0& 1 & & \\
-1& 0 & &\\
 & & 0 & 1 \\
 & & -1& 0
\end{smallmatrix}\right)
$,~ $D_2=  \left(\begin{smallmatrix}
 & & 1& 0  \\
 & & 0 & -1  \\
 -1& 0&  &  \\
 0&1 & &
\end{smallmatrix}\right)
$ and
$D_3=  \left(\begin{smallmatrix}
 & & 0& 1  \\
 & & 1 & 0  \\
 0& -1 &  &  \\
 -1& 0 & &
\end{smallmatrix}\right)$
are the skew-symmetric matrices corresponding to the left multiplication
by $\mathrm{i}, \mathrm{j}$ and $\mathrm{k}$ in $\mathbb{H}$.
Moreover, by virtue of $\nabla\omega_1, \nabla\omega_2, \nabla\omega_3$, we find that
$\xi_{13}:=\frac{1}{\sqrt{2}}(\textbf{a},0,-\textbf{c})$ and $\xi_{14}:=\frac{1}{\sqrt{6}}(\textbf{a},-2\textbf{b}, \textbf{c})$ are also unit normal vectors orthogonal to
$\xi_1,\cdots, \xi_{12}$. The matrices $A_{13}, A_{14}$ corresponding to $\xi_{13}, \xi_{14}$ are
\begin{equation}\label{1314}
A_{13}=\frac{1}{\sqrt{2}}\left(\begin{matrix}I_{4n}&&\\&0&\\&&-I_{4n}\end{matrix}\right),~~
A_{14}=\frac{1}{\sqrt{6}}\left(\begin{matrix}I_{4n}&&\\&-2I_{4n}&\\&&I_{4n}\end{matrix}\right).
\end{equation}

Let $A$ be one of the symmetric matrices in (\ref{A}), (\ref{Ai}) or (\ref{1314}). We see that at a position $x$,
\begin{eqnarray*}
\langle \sum_{i=1}^{12n-15}D_{e_i}e_i, xA\rangle&=&\sum_{i=1}^{12n-15}\langle -e_i, D_{e_i}(xA)\rangle=-\sum_{i=1}^{12n-15}\langle e_i, e_iA\rangle\\
&=&-\mathrm{Trace}A+\frac{1}{3}\langle xA, x\rangle+\sum_{\alpha=1}^{14}\langle \xi_{\alpha}A, \xi_{\alpha}\rangle.
\end{eqnarray*}
Clearly, we have $\mathrm{Trace} A=0$ and $\langle xA, x\rangle=0$. Moreover, one can check directly by the definition of $\Omega_{4n,4}$ that $\langle\xi_{\alpha}A_{\beta}, \xi_{\alpha}\rangle=0$ for any $\alpha=1,\cdots,14$ and $\beta=1,\cdots, 12$.
Besides, in virtue of a direct calculation, we find that it also holds for $A_{13}$ and $A_{14}$:
$$\sum_{\alpha=1}^{14}\langle\xi_{\alpha}A_{13}, \xi_{\alpha}\rangle= \sum_{\alpha=1}^{14}\langle\xi_{\alpha}A_{14}, \xi_{\alpha}\rangle=0.$$
Therefore, $\Omega_{4n,4}\subset S^{12n-1}(\sqrt{3})$ is a minimal submanifold, as demanded.

To complete the proof of the second part, it suffices to determine the eigenvalues of the shape operator at a certain point for some normal vector.
We first deal with the case $n=3$. In this case, $\Omega_{12,4}\cong V_3(\mathbb{H}^3)$ is a smooth manifold with $\dim\Omega_{12,4}=21$. Taking a point $x_0=(\textbf{a}, \textbf{b}, \textbf{c})\in\Omega_{12,4}$ with $\textbf{a}=(1, 0, 0)$, $\textbf{b}=(0,1,0)$ and $\textbf{c}=(0, 0, 1)\in\mathbb{H}^3$, for convenience, we represent $x_0$ alternatively by $x_0=\left(\begin{matrix}\textbf{a}\\\textbf{b}\\\textbf{c}\end{matrix}\right)=\left(\begin{matrix}1&0&0\\0&1&0\\0&0&1\end{matrix}\right)\in M_{3\times 3}(\mathbb{H})$. Recall that $\Big\{\frac{1}{2}\nabla\omega_1$, $\frac{1}{2}\nabla\omega_2$, $\frac{1}{2}\nabla\omega_3$, $\frac{1}{\sqrt{2}}\nabla f_i$, $\frac{1}{\sqrt{2}}\nabla g_j$, $\frac{1}{\sqrt{2}}\nabla h_k$ $(i, j, k=0, \cdots, 3)\Big\}$ constitutes a normal orthonormal basis of $\Omega_{12, 4}$ in $\mathbb{R}^{36}=M_{3\times 3}(\mathbb{H})$ at $x_0$. In the same way as representing $x_0$, we represent the normal basis vectors at $x_0$ alternatively, different from those in (\ref{gradients}), as follows
\begin{equation*}
\frac{1}{2}\nabla\omega_1=\left(\begin{matrix}1&0&0\\0&0&0\\0&0&0\end{matrix}\right),\quad
\frac{1}{2}\nabla\omega_2=\left(\begin{matrix}0&0&0\\0&1&0\\0&0&0\end{matrix}\right),\quad
\frac{1}{2}\nabla\omega_3=\left(\begin{matrix}0&0&0\\0&0&0\\0&0&1\end{matrix}\right),
\end{equation*}
\begin{equation*}
\frac{\nabla f_0}{\sqrt{2}}=\frac{1}{\sqrt{2}}\left(\begin{matrix}0&1&0\\1&0&0\\0&0&0\end{matrix}\right),~~\,
\frac{\nabla g_0}{\sqrt{2}}=\frac{1}{\sqrt{2}}\left(\begin{matrix}0&0&0\\0&0&1\\0&1&0\end{matrix}\right),~~\,
\frac{\nabla h_0}{\sqrt{2}}=\frac{1}{\sqrt{2}}\left(\begin{matrix}0&0&1\\0&0&0\\1&0&~~0\end{matrix}\right),
\end{equation*}

\begin{equation*}
\frac{\nabla f_i}{\sqrt{2}}=\frac{1}{\sqrt{2}}\left(\begin{matrix}0&-e_i&0\\ e_i&0&0\\0&0&0\end{matrix}\right),~~~
\frac{\nabla g_i}{\sqrt{2}}=\frac{1}{\sqrt{2}}\left(\begin{matrix}0&0&0\\0&0&-e_i\\0&e_i&0\end{matrix}\right),~~~
\frac{\nabla h_i}{\sqrt{2}}=\frac{1}{\sqrt{2}}\left(\begin{matrix}0&0&-e_i\\0&0&0\\e_i&0&0\end{matrix}\right)
\end{equation*}
for $i=1,2,3$ with $e_1=\mathrm{i}$, $e_2=\mathrm{j}$, $e_3=\mathrm{k}$. 
Therefore, we can identify the tangent space $T_{x_0}\Omega_{12, 4}$ with
$$T_{I_3}V_3(\mathbb{H}^3)=\{X_{3\times 3}\in M_{3\times 3}(\mathbb{H})~|~X+\overline{X}^t=0\}.$$
For $X=\begin{pmatrix} \lambda_1&y&z\\-\overline{y}&\lambda_2&w\\-\overline{z}&-\overline{w}&\lambda_3\end{pmatrix}\in T_{x_0}\Omega_{12, 4}$ with $\lambda_1, \lambda_2, \lambda_3\in\mathrm{Im}\mathbb{H}$, $y, z, w\in \mathbb{H}$, and for the normal vector $\xi_{14}=\frac{1}{\sqrt{6}}\begin{pmatrix}1&0&0\\0&-2&0\\0&0&1\end{pmatrix}
=\frac{1}{\sqrt{6}}\begin{pmatrix}1&0&0\\0&-2&0\\0&0&1\end{pmatrix}\,x_0
$ of $V_3(\mathbb{H}^3)$ in $S^{35}(\sqrt{3})$ at $x_0$,
the corresponding shape operator is given by
$$A_{\xi_{14}}X=-(D_X\xi_{14})^{\mathrm{T}}=-Y^{\mathrm{T}}=-\frac{1}{2}(Y-\overline{Y}^t),$$
where $(\cdot)^{\mathrm{T}}$ denotes the tangential projection and
$$Y:=D_{X}\xi_{14}
=\frac{1}{\sqrt{6}}\begin{pmatrix}1&0&0\\0&-2&0\\0&0&1\end{pmatrix}X
=\frac{1}{\sqrt{6}}\begin{pmatrix} \lambda_1&y&z\\2\overline{y}&-2\lambda_2&-2w\\-\overline{z}&-\overline{w}&\lambda_3\end{pmatrix}.$$
Hence, we obtain the principal curvatures: $\mu_1=-\frac{1}{\sqrt{6}}$ of multiplicity $10$, $\mu_2=\frac{1}{2\sqrt{6}}$ of multiplicity $8$, and $\mu_3=\frac{2}{\sqrt{6}}$ of multiplicity $3$, respectively. Evidently, $\Omega_{12, 4}$ is not austere.

In the case with $n\geqslant 4$, at the point $x_0=(\textbf{a}, \textbf{b}, \textbf{c})\in\Omega_{4n,4}$ with $\textbf{a}=(1,0,0,0,\cdots,0)$, $\textbf{b}=(0,1,0,0,\cdots,0)$ and $\textbf{c}=(0,0,1,0, \cdots,0)\in\mathbb{H}^n$, we can derive the same conclusion. 
\end{proof}

\begin{rem}
Actually, the minimality of $\Omega_{4n,4}\cong V_3(\mathbb{H}^n)$ in $S^{12n-1}(\sqrt{3})$ can be obtained as a corollary of the theory of Hsiang-Lawson \cite{HL71}. We present a direct proof in this paper to provide a potential way to study the extrinsic geometry of $\Omega_{4n,4}$ for all the indefinite $\{E_1, E_2, E_3\}$.\end{rem}

In contrast with the definite case, for the indefinite case with $m=4$, we obtain the following

\begin{prop}\label{indefinite 4}
When $m=4$, $l=8$ and $\{E_1, E_2, E_3\}$ is in the indefinite case, $\Omega_{8,4}$ is a circle bundle over $S^3\times S^3\times S^2$. In particular, it is connected.
\end{prop}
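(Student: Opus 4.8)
The plan is to pass to an explicit model of the (unique, up to congruence) indefinite Clifford system and thereby rewrite the defining equations of $\Omega_{8,4}$ as quaternionic identities. Identify $\mathbb{R}^8=\mathbb{H}\oplus\mathbb{H}$ and set $E_i(u,v)=(e_iu,\,ve_i)$ for $i=1,2,3$, where $(e_1,e_2,e_3)=(\mathrm{i},\mathrm{j},\mathrm{k})$; left multiplications on the first summand and right multiplications on the second are the two inequivalent irreducible $\mathcal{C}_3$-modules, so this system is not definite (indeed $E_1E_2E_3=\mathrm{diag}(-I_4,I_4)\neq\pm I_8$) and hence represents the indefinite case. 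Writing $\textbf{a}=(a_1,a_2)$, $\textbf{b}=(b_1,b_2)$, $\textbf{c}=(c_1,c_2)$ with entries in $\mathbb{H}$ and using $\langle p,q\rangle=\operatorname{Re}(\bar pq)$ together with the cyclicity of $\operatorname{Re}$, the four conditions $\langle\textbf{a},E_i\textbf{b}\rangle=0$ for $i=0,1,2,3$ (with $E_0=\mathrm{Id}$, $e_0=1$, so that $f_0$ is subsumed) become $\operatorname{Re}\!\big(e_i(b_1\overline{a_1}+\overline{a_2}b_2)\big)=0$ for all $i$, i.e.\ the single quaternionic equation $b_1\overline{a_1}+\overline{a_2}b_2=0$; cyclically, $c_1\overline{b_1}+\overline{b_2}c_2=0$ and $a_1\overline{c_1}+\overline{c_2}a_2=0$. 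Together with $|a_1|^2+|a_2|^2=|b_1|^2+|b_2|^2=|c_1|^2+|c_2|^2=1$ these are all the defining relations.

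Next I would take moduli: from $b_1\overline{a_1}=-\overline{a_2}b_2$ and its two cyclic companions one gets $|a_1||b_1|=|a_2||b_2|$, $|b_1||c_1|=|b_2||c_2|$, $|c_1||a_1|=|c_2||a_2|$; a short elimination, together with the observation that if any component vanished the relations would force $\textbf{c}=0$, yields $|a_1|=|a_2|=|b_1|=|b_2|=|c_1|=|c_2|=\tfrac{1}{\sqrt{2}}$. Writing $a_i=\tfrac{1}{\sqrt{2}}\alpha_i$, $b_i=\tfrac{1}{\sqrt{2}}\beta_i$, $c_i=\tfrac{1}{\sqrt{2}}\gamma_i$ with $\alpha_i,\beta_i,\gamma_i\in S^3\subset\mathbb{H}$, I solve the first two relations for $\beta_2=-\alpha_2\beta_1\overline{\alpha_1}$ and $\gamma_2=\alpha_2\beta_1\overline{\alpha_1}\gamma_1\overline{\beta_1}$ and substitute into the third; the factor $\alpha_2$ cancels, and with $P:=\overline{\alpha_1}\beta_1$, $Q:=\overline{\alpha_1}\gamma_1\in S^3$ (equivalently $\beta_1=\alpha_1P$, $\gamma_1=\alpha_1Q$) the third relation reduces to $\overline{Q}\,P=-P\,\overline{Q}$, i.e.\ $P$ and $\overline{Q}$ anticommute. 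Since two unit quaternions anticommute precisely when both are purely imaginary and mutually orthogonal, and conjugation preserves $S^2:=S^3\cap\operatorname{Im}\mathbb{H}$ and orthogonality among imaginaries, this is equivalent to $P,Q\in S^2$ with $\langle P,Q\rangle=0$.

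Hence $(\textbf{a},\textbf{b},\textbf{c})\mapsto(\alpha_1,\alpha_2,P,Q)$ is a diffeomorphism of $\Omega_{8,4}$ onto $\{(\alpha_1,\alpha_2,P,Q):\alpha_1,\alpha_2\in S^3,\ P,Q\in S^2,\ \langle P,Q\rangle=0\}\cong S^3\times S^3\times T^1S^2$, where $T^1S^2\cong\mathrm{SO}(3)\cong\mathbb{RP}^3$ is the unit tangent bundle of $S^2$ (the pair $(P,Q)$ being a unit tangent vector $Q$ at the point $P$). Composing with $(\alpha_1,\alpha_2,P,Q)\mapsto(\alpha_1,\alpha_2,P)$ exhibits $\Omega_{8,4}$ as the pullback of the circle bundle $T^1S^2\to S^2$ under the projection $S^3\times S^3\times S^2\to S^2$; concretely this bundle map is $(\textbf{a},\textbf{b},\textbf{c})\mapsto(\sqrt{2}\,a_1,\ \sqrt{2}\,a_2,\ 2\,\overline{a_1}b_1)$, with the $S^1$-fibre recording the remaining freedom in $Q$. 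In particular $\Omega_{8,4}$ is a circle bundle over $S^3\times S^3\times S^2$, and being the total space of a circle bundle over a connected base it is connected.

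The delicate point is the reduction of the third relation: it proceeds through several one-sided multiplications and conjugations, so one must follow the signs carefully and keep every step reversible, so that no solutions are created or lost; pinning down the correct indefinite model and checking its inequivalence to the definite one used in Proposition \ref{m=4} requires comparable care. The modulus computation, the characterization of anticommuting unit quaternions, and the recognition of $\{(P,Q)\}$ as $T^1S^2$ are all routine.
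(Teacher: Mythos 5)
Your proposal is correct, and it takes a genuinely different route from the paper. The paper first maps $\Omega_{8,4}$ to the focal submanifold $M_+^{10}$ via $(\textbf{a},\textbf{b},\textbf{c})\mapsto\frac{1}{\sqrt2}(\textbf{a},\textbf{b})$, characterizes the image by a Gram-determinant computation (yielding the unit imaginary $\xi$ with $\textbf{a}=\xi\textbf{b}$), defines $p(\textbf{a},\textbf{b},\textbf{c})=(a_1,a_2,\xi)$, and proves $p$ is a submersion by lifting curves through the Stiefel fibration $V_7(\mathbb{R}^8)\to V_6(\mathbb{R}^8)$, concluding via compact connected fibers that $p$ is an $S^1$-bundle. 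You instead solve the quaternionic relations globally and in closed form: the substitutions $\beta_2=-\alpha_2\beta_1\overline{\alpha_1}$, $\gamma_2=\alpha_2\beta_1\overline{\alpha_1}\gamma_1\overline{\beta_1}$ reduce everything to the anticommutation of $P=\overline{\alpha_1}\beta_1$ and $\overline{Q}$ with $Q=\overline{\alpha_1}\gamma_1$, giving an explicit diffeomorphism $\Omega_{8,4}\cong S^3\times S^3\times T^1S^2$. This is stronger than what the paper records: it identifies the diffeomorphism type of the total space and exhibits the circle bundle concretely as the pullback of $T^1S^2\to S^2$, and it sidesteps both the Gram-matrix lemma and the curve-lifting/Ehresmann step. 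The only point requiring care, which you handle correctly, is that your model $E_i(u,v)=(e_iu,ve_i)$ is algebraically equivalent to the paper's $E_i(u,v)=(e_iu,-e_iv)$ (both second summands have volume element $+I$), so the two versions of $\Omega_{8,4}$ are congruent; your computations of the moduli $|a_i|=\cdots=\frac{1}{\sqrt2}$, of the reduction to $P\overline{Q}=-\overline{Q}P$, and of the characterization of anticommuting unit quaternions all check out.
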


\begin{proof}
We first recall that when $m=4$, $l=8$, there is one definite isoparametric family of OT-FKM type, and one indefinite family.

For $\textbf{a}=(a_1, a_2)\in\mathbb{H}\oplus\mathbb{H}$, we define $\{E_1, E_2, E_3\}$ in the indefinite case as follows
\begin{equation*}
E_1(\textbf{a}):=(\mathrm{i}a_1, -\mathrm{i}a_2), ~~
E_2(\textbf{a}):=(\mathrm{j}a_1, -\mathrm{j}a_2),~~
E_3(\textbf{a}):=(\mathrm{k}a_1, -\mathrm{k}a_2).
\end{equation*}
Taking $\textbf{a}=\frac{1}{\sqrt{2}}(1, 1)$, $\textbf{b}=\frac{1}{\sqrt{2}}(\mathrm{i},\mathrm{i})$ and $\textbf{c}=\frac{1}{\sqrt{2}}(\mathrm{j},\mathrm{j})$,
we see easily $(\textbf{a}, \textbf{b}, \textbf{c})\in \Omega_{8,4}$. Thus it follows from Theorem \ref{omega} that $\Omega_{8,4}$ is a smooth manifold of dimension $9$.

Clearly, for any $(\textbf{a}, \textbf{b}, \textbf{c})=\big((a_1, a_2), (b_1, b_2), (c_1, c_2)\big)\in \Omega_{8,4}$, the condition $\langle \textbf{a}, \textbf{b} \rangle=\langle \textbf{a}, E_i\textbf{b} \rangle=0 ~~(i=1,2,3)$ is equivalent to $a_1\overline{b_1}+b_2\overline{a_2}=0$. Similarly, we can also obtain that $b_1\overline{c_1}+c_2\overline{b_2}=0$ and $c_1\overline{a_1}+a_2\overline{c_2}=0$. As a direct consequence, $a_1, a_2, b_1,b_2,c_1,c_2$ are all non-zero, and furthermore, $|a_1|=|a_2|=|b_1|=|b_2|=|c_1|=|c_2|=\frac{1}{\sqrt{2}}$.

Define a map $\phi$ from  $\Omega_{8,4}$ to the focal submanifold $M_+^{10}$ of OT-FKM type in the indefinite case with multiplicity $(4,3)$ as
\begin{eqnarray}
\phi: \quad \Omega_{8,4}&\longrightarrow& M_+^{10}\label{omega84}\\
(\textbf{a}, \textbf{b}, \textbf{c})&\mapsto& \frac{1}{\sqrt{2}}(\textbf{a}, \textbf{b}).\nonumber
\end{eqnarray}
Clearly, $\phi$ is not surjective. Moreover, we can show

\begin{lem}\label{map omega to M+}
$\frac{1}{\sqrt{2}}(\textbf{a}, \textbf{b})\in \mathrm{Image}(\phi)$ if and only if $|a_1|=|a_2|=\frac{1}{\sqrt{2}}$ and there exists $\xi\in \mathbb{H}$ with $|\xi|=1$ and $\mathrm{Re}~\xi=0$, such that $\textbf{a}=\xi\textbf{b}$.
\end{lem}

\noindent\emph{Proof of Lemma \ref{map omega to M+}:}
Since one can check the sufficient part directly, we will only consider the necessary part. Assume $\frac{1}{\sqrt{2}}(\textbf{a}, \textbf{b})\in \mathrm{Image}(\phi)$.
Then there exists $\textbf{c}\in \mathbb{H}\oplus\mathbb{H}$ such that $(\textbf{a}, \textbf{b}, \textbf{c})\in \Omega_{8,4}$. Because $\textbf{c}$ is perpendicular to the space spanned by $\{\textbf{a}, \textbf{b}, E_1\textbf{a}, E_2\textbf{a}, E_3\textbf{a}, E_1\textbf{b}, E_2\textbf{b}, E_3\textbf{b}\}$, the existence of $\textbf{c}$ is equivalent to the inequality
$$\dim \textrm{Span}\{\textbf{a}, \textbf{b}, E_1\textbf{a}, E_2\textbf{a}, E_3\textbf{a}, E_1\textbf{b}, E_2\textbf{b}, E_3\textbf{b}\}<8,$$
that is,
$$\dim \textrm{Span}\{E_1\textbf{a}, E_2\textbf{a}, E_3\textbf{a}, E_1\textbf{b}, E_2\textbf{b}, E_3\textbf{b}\}<6.$$
Equivalently, the Gram matrix $G$ defined by the inner products of the vectors $E_1\textbf{a}, E_2\textbf{a}, E_3\textbf{a}, E_1\textbf{b}, E_2\textbf{b}, E_3\textbf{b}$ expressed as
$$G=\left(\begin{matrix} ~~I&\Phi_{\textbf{ab}}\\
\Phi_{\textbf{ba}}&~~I\end{matrix}\right)$$
has determinant $0$, where
$$\Phi_{\textbf{ab}}=\left(\begin{matrix} 0&x&y\\-x&0&z\\-y&-z&0\end{matrix}\right)$$
with $x=\langle E_1\textbf{a}, E_2\textbf{b} \rangle$, $y=\langle E_1\textbf{a}, E_3\textbf{b} \rangle$, and $z=\langle E_2\textbf{a}, E_3\textbf{b} \rangle$.
A direct calculation leads to
$$0=\det G=\det(I+\Phi_{\textbf{ab}}^2)=\left( 1-(x^2+y^2+z^2)\right)^2.$$
That is to say, $$x^2+y^2+z^2=\langle a_1\overline{b_1}+a_2\overline{b_2}, \mathrm{i}\rangle^2+\langle a_1\overline{b_1}+a_2\overline{b_2}, \mathrm{j}\rangle^2+\langle a_1\overline{b_1}+a_2\overline{b_2}, \mathrm{k}\rangle^2=1.$$
Combining with the equality $a_1\overline{b_1}+b_2\overline{a_2}=0$,
which implies in particular that $\mathrm{Re}(a_1\overline{b_1}+a_2\overline{b_2})=0$, we obtain
$$|2\mathrm{Im}(a_1\overline{b_1})|=|a_1\overline{b_1}-\overline{a_1\overline{b_1}}|=|a_1\overline{b_1}+a_2\overline{b_2}|=1=2|a_1\overline{b_1}|,$$
where the last equality follows from $|a_1|=|b_1|=\frac{1}{\sqrt{2}}$.
Hence, $\mathrm{Re}(a_1\overline{b_1})=0$.
Define $\xi:=2a_1\overline{b_1}$. Thus $2a_2\overline{b_2}=-2\overline{a_1\overline{b_1}}=-\overline{\xi}=\xi$. Hence we get $(a_1, a_2)=\xi(b_1, b_2)$, \emph{i.e.}, $\textbf{a}=\xi\textbf{b}$. Now the proof of Lemma \ref{map omega to M+} is complete.

With Lemma \ref{map omega to M+} in mind, we observe without much difficulty that
$$\mathrm{Span}\{\textbf{a}, E_1\textbf{a}, E_2\textbf{a}, E_3\textbf{a}, \textbf{b}, E_1\textbf{b}, E_2\textbf{b}, E_3\textbf{b}\}=\mathrm{Span}\{\textbf{a}, E_1\textbf{a}, E_2\textbf{a}, E_3\textbf{a}, \textbf{b}, (a_1, -a_2)\},$$
and they are both of dimension $6$.

Therefore, we can define the following map
\begin{eqnarray*}
p: \quad\Omega_{8,4}&\longrightarrow& S^3(\frac{1}{\sqrt{2}})\times S^3(\frac{1}{\sqrt{2}})\times S^2(1)\\
(\textbf{a}, \textbf{b}, \textbf{c})&\mapsto& (a_1, a_2,\xi).
\end{eqnarray*}
We aim to prove that $p$ is a submersion. First of all, Lemma \ref{map omega to M+} reveals that the map $p$ is onto. It is sufficient to show that for any $(\textbf{a}^0, \textbf{b}^0, \textbf{c}^0)\in \Omega_{8,4}$, and
$$(\textbf{a}^0, \xi^0):=p(\textbf{a}^0, \textbf{b}^0, \textbf{c}^0)\in S^3(\frac{1}{\sqrt{2}})\times S^3(\frac{1}{\sqrt{2}})\times S^2(1),$$
the tangent map
$$dp: T_{(\textbf{a}^0, \textbf{b}^0, \textbf{c}^0)}\Omega_{8,4}\rightarrow T_{(\textbf{a}^0, \xi^0)}S^3(\frac{1}{\sqrt{2}})\times S^3(\frac{1}{\sqrt{2}})\times S^2(1)$$
is surjective. Given a tangent vector
$$X\in T_{(\textbf{a}^0, \xi^0)}S^3(\frac{1}{\sqrt{2}})\times S^3(\frac{1}{\sqrt{2}})\times S^2(1),$$
there exists a curve
$$\gamma: (-\varepsilon, \varepsilon)\rightarrow S^3(\frac{1}{\sqrt{2}})\times S^3(\frac{1}{\sqrt{2}})\times S^2(1),$$
defined by $\gamma(t)=(\textbf{a}(t), \xi(t))$ satisfying $\gamma(0)=(\textbf{a}^0, \xi^0)$ and $\gamma'(0)=X$. Define $\textbf{b}(t):=-\xi(t)\textbf{a}(t)$.
The curve $\gamma$ induces the following curve in the Stiefel manifold
$$\alpha:(-\varepsilon, \varepsilon)\rightarrow V_6(\mathbb{R}^8)$$
with $\alpha(t)=(\textbf{a}(t), E_1\textbf{a}(t), E_2\textbf{a}(t), E_3\textbf{a}(t), \textbf{b}(t), (a_1(t), -a_2(t)))$. Since the natural projection from $V_7(\mathbb{R}^8)$ to $V_6(\mathbb{R}^8)$ by dropping the last vector is a fiber bundle, one can lift the curve $\alpha(t)$ to a curve $\overline{\alpha}(t)$ in $V_7(\mathbb{R}^8)$ with
$$\overline{\alpha}(0)=(\textbf{a}^0, E_1\textbf{a}^0, E_2\textbf{a}^0, E_3\textbf{a}^0, \textbf{b}^0, (a_1^0, -a_2^0), \textbf{c}^0).$$
Define $\textbf{c}(t)$ to be the last component of $\overline{\alpha}(t)\in V_7(\mathbb{R}^8)$. Now, we obtain a curve
$$\overline{\gamma}:(-\varepsilon, \varepsilon)\rightarrow \Omega_{8,4}$$
with $\overline{\gamma}(0)=(\textbf{a}^0, \textbf{b}^0, \textbf{c}^0)$ and $p(\overline{\gamma}(t))=\gamma(t)$. Then, it is valid that $p$ is a surjective submersion with compact connected fibers. Hence, $p$ is a fiber bundle with fiber $S^1$. As a consequence, $\Omega_{8,4}$ is connected.

Now the proof of Proposition \ref{indefinite 4} is complete.
\end{proof}

Next, we will continue to study the connectedness for general $\Omega_{l,m}$. To achieve this goal, we need to prepare several lemmas.
Let $\{E_1, \cdots, E_{m-1}\}$ be a representation of the Clifford algebra $\mathcal{C}_{m-1}$ on $\mathbb{R}^l$ with $l=n\delta(m)$. 
When $m\not\equiv0~(\mathrm{mod}~4)$, let $\{f_1,\cdots, f_{m-1}\}$ be the unique irreducible representation of $\mathcal{C}_{m-1}$ on $\mathbb{R}^{\delta(m)}$. Then up to algebraic equivalence, for $(x_1,\cdots, x_n)\in \mathbb{R}^l=\oplus_{n}\mathbb{R}^{\delta(m)}$, $E_i(x_1,\cdots, x_n)=(f_ix_1,\cdots, f_ix_n)$ $(i=1,\cdots, m-1)$.
When $m\equiv0~(\mathrm{mod}~4)$, let $\{f_1,\cdots, f_{m-1}\}$ and $\{f_1',\cdots, f_{m-1}'\}$ be the two irreducible representations of $\mathcal{C}_{m-1}$ on $\mathbb{R}^{\delta(m)}$. For instance, the two representations can be chosen in such a way that $f_i'=-f_i$ $(i=1,\cdots, m-1)$. Then up to algebraic equivalence, there exists a natural number $0\leqslant q\leqslant n$ such that $E_i(x_1,\cdots, x_n)=(f_ix_1,\cdots f_ix_q, f_i'x_{q+1},\cdots, f_{i}'x_n)$ for $(x_1,\cdots, x_n)\in \mathbb{R}^l=\oplus_{n}\mathbb{R}^{\delta(m)}$, and $i=1,\cdots, m-1$.

Define
$$W_{l,m}:=\left\{(\textbf{a}, \textbf{b})\in \mathbb{R}^l\oplus\mathbb{R}^l~\vline~\begin{array}{ll}
\langle \textbf{a}, \textbf{b} \rangle=0,\,\,|\textbf{a}|^2=|\textbf{b}|^2=1, \\
\langle \textbf{a}, E_i\textbf{b} \rangle=0,
\,\,~\textrm{for}~i=1,\cdots, m-1.\end{array}\right\}.$$
Evidently, $W_{l,m}$ is diffeomorphic to the focal submanifold $M_+$ of OT-FKM type in (\ref{M+}), thus a smooth connected manifold. As a result, one has a smooth map
\begin{eqnarray}\label{phi'}
\phi':~~ \Omega_{l,m}&\longrightarrow& W_{l,m}\\
 (\textbf{a}, \textbf{b}, \textbf{c})&\mapsto& (\textbf{a}, \textbf{b}).\nonumber
\end{eqnarray}
For $(\textbf{a}, \textbf{b})\in W_{l,m}$, denote by $g(\textbf{a}, \textbf{b})$ the Gram matrix determined by the $2m$ vectors $\{\textbf{a}, E_1\textbf{a}, \cdots, E_{m-1}\textbf{a}, \textbf{b}, E_1\textbf{b}, \cdots, E_{m-1}\textbf{b}\}$.

\begin{lem}\label{fibration}
Let $U:=\{(\textbf{a}, \textbf{b})\in W_{l,m}~|~\mathrm{det}\,g(\textbf{a}, \textbf{b})\neq 0\}$.
Then $\phi'|_{\phi'^{-1}(U)}: \phi'^{-1}(U)\rightarrow U$ is a smooth fiber bundle provided $l>2m$.
\end{lem}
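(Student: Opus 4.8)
The plan is to show that $\phi'|_{\phi'^{-1}(U)}$ is a locally trivial fiber bundle by exhibiting, over a neighborhood of each point of $U$, an explicit trivialization coming from the linear-algebra structure of the fibers. Over a point $(\textbf{a},\textbf{b})\in U$, the fiber $\phi'^{-1}(\textbf{a},\textbf{b})$ consists of all unit vectors $\textbf{c}\in\mathbb{R}^l$ orthogonal to the $2m$-dimensional space $X_{\textbf{a},\textbf{b}}:=\mathrm{Span}\{\textbf{a},E_1\textbf{a},\dots,E_{m-1}\textbf{a},\textbf{b},E_1\textbf{b},\dots,E_{m-1}\textbf{b}\}$; since $\det g(\textbf{a},\textbf{b})\neq0$ exactly says this span has full dimension $2m$, the fiber is the unit sphere $S^{l-2m-1}$ in the orthogonal complement $X_{\textbf{a},\textbf{b}}^{\perp}$, which has constant dimension $l-2m\geqslant 1$ by the hypothesis $l>2m$. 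So set-theoretically $\phi'|_{\phi'^{-1}(U)}$ is a sphere bundle; the task is to produce smooth local trivializations.

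First I would show that $U$ is open in $W_{l,m}$ (the determinant is a continuous function), hence a smooth manifold, and that $\phi'^{-1}(U)$ is an open submanifold of $\Omega_{l,m}$. Next, fix $(\textbf{a}_0,\textbf{b}_0)\in U$ and let $V\subset X_{\textbf{a}_0,\textbf{b}_0}^{\perp}$ be that fixed $(l-2m)$-dimensional subspace. For $(\textbf{a},\textbf{b})$ in a small neighborhood $N$ of $(\textbf{a}_0,\textbf{b}_0)$ in $U$, the $2m$ vectors $\textbf{a},E_i\textbf{a},\textbf{b},E_i\textbf{b}$ depend smoothly on $(\textbf{a},\textbf{b})$ and remain linearly independent, so applying Gram–Schmidt to them together with a fixed orthonormal basis of $V$ and projecting out the first $2m$ vectors produces a smooth family of orthonormal frames for $X_{\textbf{a},\textbf{b}}^{\perp}$; equivalently, orthogonal projection $P_{\textbf{a},\textbf{b}}:\mathbb{R}^l\to X_{\textbf{a},\textbf{b}}^{\perp}$ restricted to $V$ is an isomorphism onto $X_{\textbf{a},\textbf{b}}^{\perp}$ for $(\textbf{a},\textbf{b})$ near $(\textbf{a}_0,\textbf{b}_0)$, and normalizing $P_{\textbf{a},\textbf{b}}|_V$ gives a smooth family of isometries $\Psi_{\textbf{a},\textbf{b}}:V\to X_{\textbf{a},\textbf{b}}^{\perp}$. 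Then the map
\begin{equation*}
N\times S(V)\longrightarrow \phi'^{-1}(N),\qquad \bigl((\textbf{a},\textbf{b}),\textbf{v}\bigr)\longmapsto \bigl(\textbf{a},\textbf{b},\Psi_{\textbf{a},\textbf{b}}(\textbf{v})\bigr)
\end{equation*}
is a diffeomorphism covering $\mathrm{id}_N$, where $S(V)$ is the unit sphere of $V$; its inverse sends $(\textbf{a},\textbf{b},\textbf{c})$ to $((\textbf{a},\textbf{b}),\Psi_{\textbf{a},\textbf{b}}^{-1}(\textbf{c}))$, which is smooth because $\Psi_{\textbf{a},\textbf{b}}^{-1}$ (being the normalized adjoint/inverse of a smoothly varying injective linear map) is smooth in $(\textbf{a},\textbf{b})$. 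This is the required local trivialization, so $\phi'|_{\phi'^{-1}(U)}$ is a smooth fiber bundle with fiber $S^{l-2m-1}$.

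The only real subtlety — and the step I expect to be the main obstacle — is verifying carefully that the vectors $\{\textbf{a},E_1\textbf{a},\dots,E_{m-1}\textbf{a},\textbf{b},E_1\textbf{b},\dots,E_{m-1}\textbf{b}\}$ really do vary smoothly and stay linearly independent on an open set, and that the orthogonal complements $X_{\textbf{a},\textbf{b}}^{\perp}$ assemble into a smooth vector subbundle of the trivial bundle $U\times\mathbb{R}^l$; once that is in place (it follows from the smoothness of orthogonal projection onto the image of a smoothly varying full-rank linear map), the trivialization above is routine. An alternative, cleaner packaging is to observe that $\phi'^{-1}(U)$ is precisely the unit sphere bundle of the rank-$(l-2m)$ smooth vector bundle $\xi\to U$ whose fiber at $(\textbf{a},\textbf{b})$ is $X_{\textbf{a},\textbf{b}}^{\perp}$, and sphere bundles of smooth vector bundles are smooth fiber bundles; the hypothesis $l>2m$ is exactly what guarantees the fiber dimension is positive so that the bundle is nonempty.
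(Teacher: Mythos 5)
Your proposal is correct, and it follows a genuinely different route from the paper. You identify the fiber over $(\textbf{a},\textbf{b})\in U$ as the unit sphere in the orthogonal complement $X_{\textbf{a},\textbf{b}}^{\perp}$ of the $2m$-dimensional span (the condition $\det g(\textbf{a},\textbf{b})\neq 0$ being exactly full rank), show that these complements form a smooth rank-$(l-2m)$ subbundle of $U\times\mathbb{R}^l$ (smoothness of the orthogonal projection follows, e.g., from the explicit formula involving $g(\textbf{a},\textbf{b})^{-1}$, which exists precisely on $U$), and then build explicit local trivializations by projecting a fixed complement $V$ onto the nearby complements and renormalizing (Gram--Schmidt or polar decomposition) to get a smooth family of isometries; this exhibits $\phi'^{-1}(U)$ as the unit sphere bundle of a smooth vector bundle, hence a smooth fiber bundle with fiber $S^{l-2m-1}$, nonempty because $l>2m$. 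The paper argues differently: it uses the smooth fiber bundle $V_{2m+1}(\mathbb{R}^l)\rightarrow V_{2m}(\mathbb{R}^l)$ to lift curves in $U$ to curves in $\phi'^{-1}(U)$ with prescribed initial point, concludes that $\phi'|_{\phi'^{-1}(U)}$ is a surjective submersion with compact sphere fibers, and then invokes the Ehresmann-type principle that a proper surjective submersion is a fiber bundle (noting in passing that it is a two-fold covering when $l=2m+1$). Your argument is more elementary and self-contained — no appeal to Ehresmann or to Stiefel bundle lifting — and it gives the extra structural information that the bundle is the sphere bundle of an explicit vector bundle; the paper's argument is shorter and reuses standard machinery, and its curve-lifting device reappears almost verbatim in the proof of Theorem \ref{path}, which is why the authors phrase it that way. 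The one point to state carefully in your version is the normalization step (that $P_{\textbf{a},\textbf{b}}|_V$ is an isomorphism near the base point and that its isometric correction varies smoothly), which you already flag; with that spelled out the proof is complete.
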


\begin{proof}
It is well known that the natural projection $V_{2m+1}(\mathbb{R}^{l})\rightarrow V_{2m}(\mathbb{R}^{l})$ by dropping the last vector is a smooth fiber bundle. Hence,  for any curve $\alpha$ with $\{\alpha(t)\}\subset U\subset W_{l,m}$, given any $\overline{\alpha}(0)\in \phi'^{-1}(\alpha(0))$, there exists a lift curve $\overline{\alpha}(t)$ in $\phi'^{-1}(U)$ such that $\phi'(\overline{\alpha}(t))=\alpha(t)$. Therefore, $\phi'|_{\phi'^{-1}(U)}: \phi'^{-1}(U)\rightarrow U$ is a surjective submersion with compact fibers (a sphere of dimension $l-2m-1$), and thus a smooth fiber bundle. In particular, it is a two-fold covering when $l=2m+1$.
\end{proof}

\begin{lem}\label{connectedness}
Let $M^m$ be a regular submanifold of dimension $m$ in $\mathbb{R}^r$, $N^n$ be a smooth manifold of dimension $n$, and $f: N\rightarrow\mathbb{R}^r$ be a smooth map. Assume that $f(N)$ is closed in $\mathbb{R}^r$, $M$ is connected and $m-n\geqslant 2$. Then $M\setminus f(N)$ is connected.
\end{lem}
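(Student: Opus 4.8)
The plan is to reduce the statement to a transversality/dimension-counting argument. Since $M\subset\mathbb{R}^r$ is a connected regular submanifold of dimension $m$ and $m\geqslant n+2$, I would first argue that $M$ is connected if and only if it is path-connected (manifolds are locally path-connected), so it suffices to show any two points $p,q\in M\setminus f(N)$ can be joined by a path in $M\setminus f(N)$. Since $M$ is connected, pick any continuous path $\gamma$ in $M$ from $p$ to $q$; the task is to perturb $\gamma$, rel endpoints, so that its image avoids the closed set $f(N)$.

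First I would invoke smooth approximation: since $M$ is a smooth manifold and $\gamma$ has endpoints $p,q\notin f(N)$, and $f(N)$ is closed, there is a smooth path $\tilde\gamma:[0,1]\to M$ with the same endpoints, homotopic to $\gamma$ rel endpoints (and we may keep $\tilde\gamma$ close enough to $\gamma$ that endpoints are unchanged). Now consider the smooth map $\Psi: [0,1]\times N\to \mathbb{R}^r\times\mathbb{R}^r$, but more efficiently work inside $M$: the point is that $\tilde\gamma([0,1])$ is the image of a $1$-manifold and $f(N)$ is (set-theoretically) covered by the image of an $n$-manifold, so their ``expected'' intersection dimension in $\mathbb{R}^r$ is $1+n-r$, which is negative when $r>n+1$; but $r$ could be large relative to $m$, so this is not yet the right count. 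Instead I would work intrinsically in $M$: the preimage structure is not available since $f$ need not map into $M$, so the correct move is to use that $\tilde\gamma$ can be chosen, within $M$, transverse to... nothing canonical.

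The clean approach: use Sard/parametric transversality applied to a family of paths in $M$. Concretely, fix a tubular neighborhood and local coordinates; more simply, since $M$ is a smooth manifold of dimension $m\geqslant 2$, the space of smooth paths $[0,1]\to M$ from $p$ to $q$ is ``large'', and I claim a generic such path misses $f(N)$. To make this rigorous without infinite-dimensional manifolds, I would cover the compact image $\tilde\gamma([0,1])$ by finitely many coordinate charts $\phi_\ell:U_\ell\to\mathbb{R}^m$ of $M$, subdivide $[0,1]$ so each subinterval maps into one chart, and perturb $\tilde\gamma$ one subinterval at a time: on a subinterval $[t_{\ell-1},t_\ell]$ mapped into $U_\ell$, consider translating the path in coordinates by a vector $v\in\mathbb{R}^m$; the set of ``bad'' $v$ for which the translated arc meets $f(N)\cap U_\ell$ is the image of the map $(t,x)\mapsto (\text{coordinate of }\tilde\gamma(t)) - (\text{coordinate of point of }f(N))$, which factors through a space of dimension $\leqslant 1+n$; hence the bad set has measure zero in $\mathbb{R}^m$ provided $m>1+n$, i.e. $m\geqslant n+2$. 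Choosing $v$ small and generic, and patching the finitely many local perturbations together (smoothing the corners, again a small generic perturbation in dimension $m\geqslant 2$ which keeps us off the closed set $f(N)$), produces a path from $p$ to $q$ in $M$ avoiding $f(N)$.

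I expect the main obstacle to be the bookkeeping of the patching step: ensuring that the local perturbations on overlapping subintervals can be chosen simultaneously small enough to avoid $f(N)$ (which is only closed, not compact, so one must use that $\tilde\gamma([0,1])$ is compact and at positive distance from $f(N)$ except near points already on $M\setminus f(N)$), and that rounding the corners introduced at the subdivision points does not reintroduce intersections — this again uses $m\geqslant 2$ together with the measure-zero argument localized near each corner. Everything else (smooth approximation of paths, the dimension count $1+n<m$, Sard's theorem for the bad-parameter set) is routine, so I would state those quickly and concentrate the write-up on the finite patching.
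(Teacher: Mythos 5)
Your proposal is correct and follows essentially the same route as the paper's proof: a chart-by-chart general-position argument that makes the ``bad'' perturbation parameters a measure-zero (Sard) image of an $(n+1)$-dimensional source using $n+1<m$, glued together along a compact connecting path via a finite chain of charts (the paper's local step perturbs the endpoint of a geodesic over an $(m-1)$-dimensional cap instead of translating the arc, but the dimension count is identical). The one point you should make explicit in a full write-up is that, since $f$ maps into $\mathbb{R}^r$ rather than into $M$, the ``coordinate of a point of $f(N)$'' in an $m$-dimensional chart of $M$ is not defined as stated; one must use a slice chart of $\mathbb{R}^r$ adapted to $M$ and compose $f$ with the projection onto the $M$-factor (the map $\pi\circ f$ in the paper) so that the bad set is genuinely contained in the image of a smooth map defined on an open subset of $[0,1]\times N$.
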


\begin{proof}
For each point $p\in M$, there exists a coordinate chart $(U_p, \varphi_p=(x_1,\cdots, x_r))$ with $p\in U_p\subset \mathbb{R}^r$, such that
$$V_p:=U_p\cap M=\{q\in U_p~|~ x_{m+1}(q)=\cdots=x_{r}(q)=0\}$$
is a neighborhood of $p$ in $M$.
Moreover, we can require that $V_p$ is a strongly convex ball  centered at $p$ in $M$ with respect to the metric induced from $\mathbb{R}^r$.

At the first step, we will show that $V_p\setminus f(N)$ is path-connected.
For any two points $q_1, q_2\in V_p\setminus f(N)$,  there exists a unique minimal geodesic  $\gamma: [0, 1]\rightarrow V_p$ connecting $q_1$ and $q_2$: $\gamma(0)=q_1$, $\gamma(1)=q_2$. Then we can define an open cone $C(q_1, q_2, \varepsilon)\subset V_p$ as follows:
a point $q \in C(q_1, q_2, \varepsilon)$ if and only if there exists a geodesic $\alpha:[0, 1]\rightarrow V_p$ satisfying $\alpha(0)=q_1$, $d(q_1, \alpha(1))=d(q_1, q_2)$, and $d(q_2, \alpha(1))<\varepsilon$, such that $q=\alpha(t)$
for some $t\in (0,1)$. It is clear that for a small enough $\varepsilon>0$, $C(q_1, q_2, \varepsilon)$ is a well-defined
 open cone with vertex $q_1$ in $V_p$. Furthermore, we define $S_{\varepsilon}(q_2)$ as follows:
A point $q\in S_{\varepsilon}(q_2)$ if and only if there exists a geodesic $\alpha:[0, 1]\rightarrow V_p$, such that $\alpha(0)=q_1$, $\alpha(1)=q$, $d(q_1, q)=d(q_1, q_2)$, and $d(q_2, q)<\varepsilon$.
Because $f(N)$ is closed, we can choose a small enough $\varepsilon>0$ such that $S_{\varepsilon}(q_2)\subset V_p\setminus f(N)$.
   Then for each geodesic $\alpha:[0, 1]\rightarrow V_p$ with $\alpha(0)=q_1$, we derive
   a smooth map $\rho: C(q_1, q_2, \varepsilon)\rightarrow S_{\varepsilon}(q_2)$ by mapping $\alpha(t)$ to $\alpha(1)$ for $t\in (0,1)$.

Let $\pi: U_p\rightarrow V_p$ be the natural projection corresponding to the coordinate projection under $\varphi_p$.
Then a smooth map $\pi\circ f: f^{-1}(U_p)\rightarrow V_p$ is obtained,
and thus another smooth map $\sigma:=\rho\circ(\pi\circ f): (\pi\circ f)^{-1}(C(q_1, q_2, \varepsilon))\rightarrow S_{\varepsilon}(q_2)$.
By assumption, $n<m-1$,
then Sard's theorem implies that $\sigma$ is not surjective, which yields the existence of a minimal geodesic $\alpha:[0, 1]\rightarrow V_p$ with $\alpha(0)=q_1$, $d(q_1, \alpha(1))=d(q_1, q_2)$ and $d(q_2, q)<\varepsilon$, and thus the existence of a path from $q_1$ to $q_2$ in $V_p\setminus f(N)$. Hence, $V_p\setminus f(N)$ is path-connected.

Next, for any two points $q_1, q_2\in M\setminus f(N)$, it follows from the connectedness of $M$ that there exists a path $\beta: [0, 1]\rightarrow M$ connecting them. Noticing that $\beta([0, 1])$ is compact, then there exists a finite open covering of $\beta([0, 1])$ by strongly convex balls which satisfy all the properties described above. Therefore, there exists another path $\beta':[0, 1]\rightarrow M\setminus f(N)$ with $\beta'(0)=q_1$ and $\beta'(1)=q_2$. Consequently, $M\setminus f(N)$ is path-connected.
\end{proof}

\begin{lem}\label{degenerate}
Let $m>1$. For $(\textbf{a}, \textbf{b})\in W_{l,m}$, $\mathrm{det}~g(\textbf{a}, \textbf{b})=0$ if and only if there exist
$\lambda=(\lambda_1,\cdots, \lambda_{m-1}), \mu=(\mu_1,\cdots, \mu_{m-1})\in \mathbb{R}^{m-1}$ with $|\lambda|=|\mu|=1,$
and $\textbf{c}\in \mathbb{R}^l$ with $|\textbf{c}|=1$, such that $(\textbf{a}, \textbf{b})=((\sum_{i=1}^{m-1}\lambda_iE_i)\textbf{c}, (\sum_{i=1}^{m-1}\mu_iE_i)\textbf{c})$.
\end{lem}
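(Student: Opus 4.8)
The plan is to reinterpret the condition $\det g(\textbf{a},\textbf{b})=0$ geometrically. Set $V_{\textbf{a}}:=\mathrm{Span}\{\textbf{a},E_1\textbf{a},\cdots,E_{m-1}\textbf{a}\}$ and $V_{\textbf{b}}:=\mathrm{Span}\{\textbf{b},E_1\textbf{b},\cdots,E_{m-1}\textbf{b}\}$. Using only the Clifford relations $E_iE_j+E_jE_i=-2\delta_{ij}\Id$ and $|\textbf{a}|=|\textbf{b}|=1$, one checks directly that each of these spanning sets is orthonormal, so $\dim V_{\textbf{a}}=\dim V_{\textbf{b}}=m$; and the defining relations of $W_{l,m}$, namely $\langle\textbf{a},\textbf{b}\rangle=0$ and $\langle\textbf{a},E_i\textbf{b}\rangle=0$, force every inner product between a vector of the first set and a vector of the second to vanish except possibly $\langle E_i\textbf{a},E_j\textbf{b}\rangle$. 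In particular $g(\textbf{a},\textbf{b})$ is positive semidefinite, and it is singular exactly when the $2m$ listed vectors are linearly dependent; since each half is independent, this occurs if and only if $V_{\textbf{a}}\cap V_{\textbf{b}}\neq\{0\}$. Thus the lemma reduces to showing that, for $(\textbf{a},\textbf{b})\in W_{l,m}$, the subspaces $V_{\textbf{a}}$ and $V_{\textbf{b}}$ meet nontrivially if and only if $\textbf{a}=P_\lambda\textbf{c}$ and $\textbf{b}=P_\mu\textbf{c}$ for a common unit vector $\textbf{c}$ and unit $\lambda,\mu\in\mathbb{R}^{m-1}$, where $P_\lambda:=\sum_{i=1}^{m-1}\lambda_iE_i$.

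Both implications rely on one elementary fact: if $|\lambda|=1$ then $P_\lambda^{t}=-P_\lambda$ and, by the Clifford relations, $P_\lambda^{t}P_\lambda=|\lambda|^2\,\Id=\Id$, so $P_\lambda$ is orthogonal with $P_\lambda^{-1}=-P_\lambda$. For the ``if'' part, if $(\textbf{a},\textbf{b})=(P_\lambda\textbf{c},P_\mu\textbf{c})$ with $|\lambda|=|\mu|=|\textbf{c}|=1$, then $\textbf{c}=-P_\lambda\textbf{a}=-\sum_i\lambda_iE_i\textbf{a}\in V_{\textbf{a}}$ and likewise $\textbf{c}=-P_\mu\textbf{b}\in V_{\textbf{b}}$; since $\textbf{c}\neq 0$ this gives $V_{\textbf{a}}\cap V_{\textbf{b}}\neq\{0\}$, hence $\det g(\textbf{a},\textbf{b})=0$.

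The ``only if'' part is where the real content sits, and the step I expect to be the main obstacle is a coefficient-vanishing argument that genuinely uses membership in $W_{l,m}$. Pick a unit vector $\textbf{c}\in V_{\textbf{a}}\cap V_{\textbf{b}}$ and write $\textbf{c}=\alpha_0\textbf{a}+\sum_i\alpha_iE_i\textbf{a}=\beta_0\textbf{b}+\sum_j\beta_jE_j\textbf{b}$. Computing $\langle\textbf{c},\textbf{a}\rangle$ from the first expression yields $\alpha_0$, while computing it from the second yields $0$ because $\langle\textbf{a},\textbf{b}\rangle=0$ and $\langle\textbf{a},E_j\textbf{b}\rangle=0$; hence $\alpha_0=0$, and symmetrically $\beta_0=0$ (now using $\langle E_i\textbf{a},\textbf{b}\rangle=-\langle\textbf{a},E_i\textbf{b}\rangle=0$). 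Therefore $\textbf{c}=P_{\alpha'}\textbf{a}=P_{\beta'}\textbf{b}$ with $\alpha'=(\alpha_1,\cdots,\alpha_{m-1})$ and $\beta'=(\beta_1,\cdots,\beta_{m-1})$, and comparing lengths gives $1=|\textbf{c}|^2=\langle P_{\alpha'}^{t}P_{\alpha'}\textbf{a},\textbf{a}\rangle=|\alpha'|^2$, so $|\alpha'|=1$ and similarly $|\beta'|=1$. Now $P_{\alpha'},P_{\beta'}$ are orthogonal, so applying $P_{\alpha'}^{-1}=-P_{\alpha'}$ and $P_{\beta'}^{-1}=-P_{\beta'}$ gives $\textbf{a}=\big(\sum_i\lambda_iE_i\big)\textbf{c}$ and $\textbf{b}=\big(\sum_i\mu_iE_i\big)\textbf{c}$ with $\lambda:=-\alpha'$, $\mu:=-\beta'$ of unit length, as asserted. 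I would also point out that the hypothesis $m>1$ is used precisely here: it ensures there is at least one generator $E_i$, so $V_{\textbf{a}},V_{\textbf{b}}$ and the combinations $\sum_i\lambda_iE_i$ are meaningful objects; for $m=1$ one has $g(\textbf{a},\textbf{b})=I_2$, which is never singular, so the statement is vacuous.
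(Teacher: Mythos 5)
Your proof is correct and follows essentially the same route as the paper: both reduce $\det g(\textbf{a},\textbf{b})=0$ to a linear dependence among $E_1\textbf{a},\cdots,E_{m-1}\textbf{a},E_1\textbf{b},\cdots,E_{m-1}\textbf{b}$ (the paper writes the dependence relation $\sum_i\xi_iE_i\textbf{a}+\sum_j\eta_jE_j\textbf{b}=0$ directly, you phrase it as $V_{\textbf{a}}\cap V_{\textbf{b}}\neq\{0\}$ after checking orthonormality of each spanning set) and then invert via $(\sum_i\lambda_iE_i)^2=-|\lambda|^2\Id$. The only difference is that you also write out the sufficiency direction and the coefficient-vanishing step $\alpha_0=\beta_0=0$, which the paper leaves to the reader.
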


\begin{proof}
We only need to prove the necessity.
 Observe that for $(\textbf{a}, \textbf{b})\in W_{l,m}$, $\mathrm{det}~g(\textbf{a}, \textbf{b})=0$ if and only if the vectors
$$E_1\textbf{a},\cdots, E_{m-1}\textbf{a},\, E_1\textbf{b},\cdots, E_{m-1}\textbf{b}$$
are linearly dependent. Hence, for $(\textbf{a}, \textbf{b})\in W_{l,m}$ with $\mathrm{det}~g(\textbf{a}, \textbf{b})=0$, there exists
$\xi=(\xi_1,\cdots, \xi_{m-1}), \eta=(\eta_1,\cdots, \eta_{m-1})\in \mathbb{R}^{m-1}$ such that
$$\sum_{i=1}^{m-1}\xi_iE_i\textbf{a}+\sum_{j=1}^{m-1}\eta_jE_j\textbf{b}=0.$$
Due to the fact that $E_1, \cdots, E_{m-1}$ are skew-symmetric, one derives $|\xi|=|\eta|\neq 0$.
Moreover, we can assume $|\xi|=|\eta|=1$. Otherwise, replace them by $\xi/|\xi|$ and $\eta/|\eta|$. Denoting by $\textbf{c}:=\sum_{i=1}^{m-1}\xi_iE_i\textbf{a}$, $\lambda:=-\xi$ and $\mu=:\eta$,
it is direct to see that $(\textbf{a}, \textbf{b})=((\sum_{i=1}^{m-1}\lambda_iE_i)\textbf{c}, (\sum_{i=1}^{m-1}\mu_iE_i)\textbf{c})$.
Hence, the lemma follows.
\end{proof}

\begin{lem}\label{U}
For $n\geqslant 2$, the subset $U=\{(\textbf{a}, \textbf{b})\in W_{l,m}~|~\mathrm{det}~g(\textbf{a}, \textbf{b})\neq 0\}\subset W_{l,m}$ is open and dense in $W_{l,m}$. Moreover, for $n\geqslant 3$, $U$ is path-connected.
\end{lem}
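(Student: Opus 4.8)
The plan is to settle the small cases $m\le 2$ by hand and, for $m\ge 3$, to exhibit $U$ as the complement in $W_{l,m}$ of the image of an explicit smooth map, so that Lemma~\ref{connectedness} applies. If $m=1$ there are no $E_i$, so $g(\textbf{a},\textbf{b})$ is the $2\times2$ Gram matrix of the orthonormal pair $\{\textbf{a},\textbf{b}\}$; hence $\det g\equiv 1$, $U=W_{l,1}=V_2(\mathbb{R}^n)$, and both assertions are immediate. If $m=2$ and $(\textbf{a},\textbf{b})\in W_{l,m}$ had $\det g=0$, then by Lemma~\ref{degenerate} it would equal $\bigl(\lambda_1E_1\textbf{c},\mu_1E_1\textbf{c}\bigr)$ with $\lambda_1,\mu_1=\pm1$, forcing $\textbf{a}=\pm\textbf{b}$ and contradicting $\langle\textbf{a},\textbf{b}\rangle=0$; hence $U=W_{l,m}$, and again both assertions are clear. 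For every $m$, openness of $U$ in $W_{l,m}$ is trivial, since $\det g$ is the restriction of a polynomial on $\mathbb{R}^{2l}$.

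Now fix $m\ge 3$; I will prove density (for $n\ge 2$) by an analyticity argument. As $\delta(m)\ge m$ (read off the table and propagated by $\delta(m+8)=16\delta(m)$) and $n\ge 2$, one has $l-m-1=n\delta(m)-m-1\ge m-1\ge 1$, so the OT-FKM family exists and $W_{l,m}\cong M_+$ is a connected real-analytic submanifold of $\mathbb{R}^{2l}$. Hence $\det g$ restricts to a real-analytic function on $W_{l,m}$, whose zero set is either all of $W_{l,m}$ or nowhere dense. It is not all of $W_{l,m}$: writing $\mathbb{R}^l=\bigoplus_{i=1}^{n}\mathbb{R}^{\delta(m)}$ with the $E_i$ acting block by block, take $\textbf{a}$ a unit vector in the first summand and $\textbf{b}$ a unit vector in the second. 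The identity $\langle E_i\textbf{u},E_j\textbf{u}\rangle=\delta_{ij}|\textbf{u}|^2$ (which follows from $E_i^2=-\mathrm{Id}$ and the skew-symmetry of $E_i$ and of $E_jE_i$, $i\ne j$), together with the orthogonality of the two summands, shows that the $2m$ vectors $\textbf{a},E_1\textbf{a},\cdots,E_{m-1}\textbf{a},\textbf{b},E_1\textbf{b},\cdots,E_{m-1}\textbf{b}$ are orthonormal; hence $(\textbf{a},\textbf{b})\in W_{l,m}$ with $\det g(\textbf{a},\textbf{b})=1$, so $U\ne\varnothing$ and $U$ is dense.

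For path-connectedness (when $n\ge 3$) I will set $N:=V_2(\mathbb{R}^{m-1})\times S^{l-1}$, a compact smooth manifold of dimension $(2m-5)+(l-1)=l+2m-6$, and consider the smooth map $h\colon N\to\mathbb{R}^{2l}$, $h(\lambda,\mu,\textbf{c})=\bigl((\sum_{i=1}^{m-1}\lambda_iE_i)\textbf{c},(\sum_{i=1}^{m-1}\mu_iE_i)\textbf{c}\bigr)$, whose image is compact, hence closed in $\mathbb{R}^{2l}$. For $(\textbf{a},\textbf{b})=h(\lambda,\mu,\textbf{c})$ one has $\langle\textbf{a},\textbf{b}\rangle=\langle\lambda,\mu\rangle$ (using $\langle E_i\textbf{c},E_j\textbf{c}\rangle=\delta_{ij}$), so combining the two directions of Lemma~\ref{degenerate} yields $h(N)\cap W_{l,m}=W_{l,m}\setminus U$, that is, $U=W_{l,m}\setminus h(N)$. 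I then apply Lemma~\ref{connectedness} with $M=W_{l,m}$ (a connected regular submanifold of $\mathbb{R}^{2l}$ of dimension $2l-m-2$), this $N$, and $f=h$: the hypothesis $\dim M-\dim N\ge 2$ reads $l-3m+4\ge 2$, i.e.\ $l\ge 3m-2$, which holds since $l=n\delta(m)\ge 3\delta(m)\ge 3m$ for $n\ge 3$. The lemma gives that $U=W_{l,m}\setminus h(N)$ is connected, and being open in the manifold $W_{l,m}$ it is path-connected.

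The step I expect to be the main obstacle is density when $n=2$: a direct dimension count fails precisely at the exceptional dimensions $m\in\{4,7,8\}$, where $\dim N$ equals $\dim W_{l,m}$, which is why I would route density through the analyticity of $W_{l,m}$ and thereby reduce it to the single explicit point $(\textbf{a},\textbf{b})$ above. A secondary subtlety in the path-connectedness part is that $h$ does \emph{not} map $N$ into $W_{l,m}$ in general (the conditions $\langle\textbf{a},E_k\textbf{b}\rangle=0$ fail for generic $(\lambda,\mu,\textbf{c})$), so one must check that $h(N)\cap W_{l,m}$ is nevertheless exactly the set $W_{l,m}\setminus U$ to be removed — which is precisely what Lemma~\ref{degenerate} provides.
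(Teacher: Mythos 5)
Your proof is correct, and it splits into one half that mirrors the paper and one half that is genuinely different. For path-connectedness you do essentially what the paper does: realize $W_{l,m}\setminus U$ via Lemma \ref{degenerate} as the trace on $W_{l,m}$ of the image of the map $(\lambda,\mu,\textbf{c})\mapsto\bigl((\sum_i\lambda_iE_i)\textbf{c},(\sum_i\mu_iE_i)\textbf{c}\bigr)$ and then apply Lemma \ref{connectedness}; the paper takes the domain $S^{m-2}\times S^{m-2}\times S^{l-1}$ and gets $\dim M-\dim N=n\delta(m)-3m+3\geqslant 3$, whereas you shrink the domain to $V_2(\mathbb{R}^{m-1})\times S^{l-1}$ using $\langle\textbf{a},\textbf{b}\rangle=\langle\lambda,\mu\rangle$ — a harmless refinement that gains one dimension but is not needed for $n\geqslant 3$. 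For density your route diverges: the paper perturbs a degenerate point $(\textbf{a},\textbf{b})$ along explicit curves that destroy the blockwise equality $|a_i|=|b_i|$ forced by Lemma \ref{degenerate} (with a separate curve for the boundary case $x=1$), while you observe that $\det g$ is the restriction of a polynomial to the connected real-analytic manifold $W_{l,m}$ and exhibit a single point where it equals $1$, so its zero set is nowhere dense. Your argument is shorter and, as you note, is the natural way to cover $n=2$ where a Sard-type dimension count fails for $m\in\{4,7,8\}$; its cost is that it leans on the connectedness of $W_{l,m}\cong M_+$ (which for $n=2$, $m\geqslant 3$ you correctly secure via $\delta(m)\geqslant m$, so $l-m-1\geqslant 1$) and on the identity theorem for real-analytic functions, whereas the paper's curves are elementary and are reused almost verbatim in the proof of Theorem \ref{path}. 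Your explicit handling of $m=1,2$ (where $U=W_{l,m}$ outright) is a worthwhile completion, since the paper's map degenerates for $m=1$.
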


\begin{proof}
Clearly, $U$ is open.
For any $(\textbf{a}, \textbf{b})\in W_{l,m}\setminus U$ with
$\textbf{a}=(a_1,\cdots, a_n)$, $\textbf{b}=(b_1,\cdots, b_n)\in\oplus_n\mathbb{R}^{\delta(m)}$, Lemma \ref{degenerate} implies $|a_i|=|b_i|$ $(i=1,\cdots, n)$.
Without loss of generality, assume $|a_1|=|b_1|\neq 0$ and $x:=|a_1|=|b_1|\in(0, 1]$.
For $x\in (0, 1)$, let $\alpha$ be a curve in $W_{l,m}$ defined by $\alpha(t):=(\textbf{a}(t), \textbf{b}(t))$ with
\begin{eqnarray*}
\textbf{a}(t)&=&(a_1, \cos{t}a_2,\cdots, \cos{t}a_n)/\sqrt{\cos^2{t}+\sin^2{t}(x^2)},\\
\textbf{b}(t)&=&(\cos{t}b_1, b_2,\cdots, b_n)/\sqrt{\cos^2{t}+\sin^2{t}(1-x^2)}.
\end{eqnarray*}
Then $\alpha(0)=(\textbf{a}, \textbf{b})\in W_{l,m}\setminus U$. However, for any $t\neq 0$ which is close to $0$, Lemma \ref{degenerate} implies that $\alpha(t)\in U$.
For $x=1$, it follows by considering the curve
$\alpha(t):=(\textbf{a}(t), \textbf{b}(t))$ defined by
\begin{eqnarray*}
\textbf{a}(t)&=&(\cos{t}a_1, \cos{t}\sin{t}a_1, 0, \cdots, 0)/\sqrt{\cos^2{t}+\cos^2{t}\sin^2{t}},\\
\textbf{b}(t)&=&(\cos^2{t}b_1, \sin{t}b_1, 0, \cdots, 0)/\sqrt{\cos^4{t}+\sin^2{t}}.
\end{eqnarray*}
Hence, $U$ is dense in $W_{l,m}$.

Next, we will prove the path-connectedness of $U$ for $n\geqslant 3$. Consider a smooth map $f: S^{m-2}\times S^{m-2}\times S^{l-1}\longrightarrow \mathbb{R}^l\oplus\mathbb{R}^l$ defined by
$$f((\lambda_1,\cdots, \lambda_{m-1}), (\mu_1,\cdots, \mu_{m-1}), \textbf{c}):=(\sum_{i=1}^{m-1}\lambda_iE_i)\textbf{c}, (\sum_{i=1}^{m-1}\mu_iE_i)\textbf{c}).$$
Lemma \ref{degenerate} indicates that $U=W_{l,m}\setminus f(S^{m-2}\times S^{m-2}\times S^{l-1})$. Denote $N:=S^{m-2}\times S^{m-2}\times S^{l-1}$ and $M:=W_{l,m}$. Then the assumption $n\geqslant 3$ leads to
$\mathrm{dim} M-\mathrm{dim} N=(2n\delta(m)-m-2)-(n\delta(m)+2m-5)=n\delta(m)-3m+3\geqslant 3$. Therefore, it follows from Lemma \ref{connectedness} that $U=M\setminus N$ is path-connected.
\end{proof}

Now, we are prepared to prove

\vspace{3mm}
\noindent
\textbf{Theorem \ref{path}.}
\emph{
For $m=1$, $n\geqslant 4$, or $m\geqslant 2$, $n\geqslant 3$, $\Omega_{l, m}$ is a path-connected closed smooth manifold. In particular, $V_3(\mathbb{O}^n)$ $(n\geqslant 3)$ is a path-connected closed smooth manifold.
}
\vspace{2mm}

\begin{proof}
At the beginning, we would like to give some illustration. When $n\geqslant3 $, it is obvious that $l-m-1\geqslant m$, and $\Omega_{l, m}$ is non-empty as we explained before, thus a smooth manifold. In particular, when $m=1$ and $n=3$,  $\Omega_{3,1}=O(3)$, which is not connected. Thus we assume $n\geqslant 4$ to avoid this case.

Firstly, it follows from Lemma \ref{fibration} that $\phi'|_{\phi'^{-1}(U)}: \phi'^{-1}(U)\rightarrow U$ is a smooth fiber bundle with a positive dimensional sphere as the fiber. Thus, $\phi'^{-1}(U)$ is connected, since the base space $U$ is path-connected by Lemma \ref{U}.

Next, for any $(\textbf{a}, \textbf{b})\in W_{l,m}\setminus U$ with
$\textbf{a}=(a_1,\cdots, a_n)$ and $\textbf{b}=(b_1,\cdots, b_n)\in\oplus_n\mathbb{R}^{\delta(m)}$, Lemma \ref{degenerate} implies $|a_i|=|b_i|$ $(i=1,\cdots, n)$.  Without loss of generality,
assume $x:=|a_1|=|b_1|\in (0, 1]$.  We first consider the case $x\in (0, 1)$. For $\widetilde{\textbf{a}}=(0, a_2, \cdots, a_n)$, $\widetilde{\textbf{b}}=(0, b_2, \cdots, b_n)$,
provided $n\geqslant 3$, we have
$$\mathrm{dim}~\mathrm{Span}\{\widetilde{\textbf{a}}, E_1\widetilde{\textbf{a}},\cdots, E_{m-1}\widetilde{\textbf{a}}, \widetilde{\textbf{b}}, E_1\widetilde{\textbf{b}},\cdots, E_{m-1}\widetilde{\textbf{b}}\}< 2m\leqslant (n-1)\delta(m),$$
which indicates the existence of  $\widetilde{\textbf{c}}=(0, c_2,\cdots, c_n)\in\mathbb{R}^l$ with $|\widetilde{\textbf{c}}|=1$ such that
$$\widetilde{\textbf{c}}\perp \mathrm{Span}\{\widetilde{\textbf{a}}, E_1\widetilde{\textbf{a}},\cdots, E_{m-1}\widetilde{\textbf{a}}, \widetilde{\textbf{b}}, E_1\widetilde{\textbf{b}},\cdots, E_{m-1}\widetilde{\textbf{b}}\}.$$
Then one obtains a curve $\gamma$ in $\Omega_{l,m}$ given by $\gamma(t)=(\textbf{a}(t), \textbf{b}(t), \textbf{c}(t))$ with
\begin{eqnarray*}
\textbf{a}(t)&=&(a_1, \cos{t}a_2,\cdots, \cos{t}a_n)/\sqrt{\cos^2{t}+\sin^2{t}x^2},\\
\textbf{b}(t)&=&(\cos{t}b_1, b_2,\cdots, b_n)/\sqrt{\cos^2{t}+\sin^2{t}(1-x^2)},\\
\textbf{c}(t)&=&\widetilde{\textbf{c}},
\end{eqnarray*}
such that $\phi'(\gamma(0))\in W_{l,m}\setminus U$, and
$\phi'(\gamma(t))\in U$ for any $t\neq 0$ near $0$ by Lemma \ref{degenerate}.
For the case $x=1$, as in the proof of Lemma \ref{U}, the existence of the curve $\gamma$ follows from a similar and modified argument.
It is clear that $\phi'^{-1}((\textbf{a}, \textbf{b}))$ is a positive dimensional sphere and path-connected.
Consequently, each point in $\Omega_{l,m}$ can be connected to some point in $\phi'^{-1}(U)$ which is path-connected.
Therefore, $\Omega_{l,m}$ is path-connected as demanded.
\end{proof}

Now, concerning the parallelizability of $\Omega_{l,m}$, we will prove

\vspace{3mm}
\noindent
\textbf{Theorem \ref{para}}
\emph{
For $m>1$, odd or $m=0~(\mathrm{mod}~4)$, $\Omega_{l,m}$ is parallelizable. In particular, $V_3(\mathbb{O}^n)$ is parallelizable.
}
\vspace{2mm}

\begin{proof}
Recalling Theorem \ref{omega}, we see that $\Omega_{l, m}\subset \mathbb{R}^{3l}$ is a closed regular submanifold of dimension $3(l-m-1)$ with trivial normal bundle, which immediately implies that $\Omega_{l, m}$ is stably parallelizable.

To study the parallelizability of $\Omega_{l, m}$, the following observation is useful. Identify $\mathbb{R}^l\oplus\mathbb{R}^l\oplus\mathbb{R}^l$ with $M_{3\times l}(\mathbb{R})$. Consequently, a natural free smooth $\mathrm{SO}(3)$ action on $\Omega_{l,m}$ is induced by matrix multiplication. It implies that the tangent bundle of $\Omega_{l,m}$ admits a trivial sub-bundle of rank $3$, which corresponds to the tangent spaces of the orbits for the free $\mathrm{SO}(3)$ action.

For the case $m>1$ and $m$ is odd, $\mathrm{dim}~\Omega_{l,m}=3(l-m-1)$ is even. Since the tangent bundle of $\Omega_{l,m}$ admits a trivial sub-bundle of rank $3$, one infers that the Euler characteristic $\chi(\Omega_{l,m})$ is zero by Hopf's theorem. Moreover, by a theorem of Kervaire and Adams (cf. \cite{Sut64}), $\Omega_{l,m}$ is parallelizable.

For the case $m\equiv 0~(\mathrm{mod}~4)$, $\mathrm{dim}~\Omega_{l,m}=3(l-m-1)=3(n\delta(m)-m-1)\equiv 1~(\mathrm{mod}~4)$. Denote $\dim\Omega_{l,m}:=4p+1$ with $p\geqslant 1$.
As we have seen that $\Omega_{l, m}$ is stably parallelizable, $\Omega_{l, m}$ is parallelizable if and only if
$\hat{\chi}_2(\Omega_{l, m})\equiv 0~(\mathrm{mod}~2)$ by the theorem of Kervaire and Adams, where
$\hat{\chi}_2(\Omega_{l, m})$ is the $\mathrm{mod}~2$ semicharacteristic introduced by Kervaire (cf. \cite{Sut64}).
To finish the proof, we need only to show $\hat{\chi}_2(\Omega_{l, m})\equiv 0~(\mathrm{mod}~2)$. By a formula of Lusztig-Milnor-Peterson \cite{LMP69} (see also P.646 of \cite{Th69}), one has
$$\kappa(\Omega_{l, m})-\hat{\chi}_2(\Omega_{l, m})=w_2(\Omega_{l, m})\cup w_{4p-1}(\Omega_{l, m}),$$
where $\kappa(\Omega_{l, m})$ is the real Kervaire semicharacteristic (cf. \cite{Th69}), and $w_2(\Omega_{l, m})$, $w_{4p-1}(\Omega_{l, m})$
are the Stiefel-Whitney classes. From the fact that $\Omega_{l, m}$ is stably parallelizable, it follows $\kappa(\Omega_{l, m})-\hat{\chi}_2(\Omega_{l, m})\equiv 0~(\mathrm{mod}~2)$. On the other hand, using the fact that $\Omega_{l,m}$ admits a trivial sub-bundle of rank no less than $2$ and a theorem of Atiyah (see P. 647 of \cite{Th69}), we get $\kappa(\Omega_{l, m})=0$ as a $\mathrm{mod}~2$ integer. Hence, $\hat{\chi}_2(\Omega_{l, m})=0$ as a $\mathrm{mod}~2$ integer and thus $\Omega_{l, m}$ is parallelizable.
\end{proof}

At the end of this section, we would like to give a proof of Proposition \ref{onto},
which is promised in the end of Section \ref{sec1}.


\vspace{3mm}
\noindent
\textbf{Proposition \ref{onto}.}
\emph{
When $n\geqslant 3$, the map $\pi: V_3(\mathbb{O}^n)\cong\Omega_{8n,8}\rightarrow S^{8n-1}(1)=V_1(\mathbb{O}^n)$ defined in (\ref{pi2}) is surjective, but not submersive. In particular, $\pi$ is not a smooth fibre bundle.
}
\vspace{2mm}

\begin{proof}
Let $p$ be the projection
\begin{eqnarray*}
p: ~~V_2(\mathbb{O}^{n-1})&\longrightarrow& S^{8n-9}(1)=V_1(\mathbb{O}^{n-1})\\
\begin{pmatrix}a_1&\cdots &a_{n-1}\\c_1&\cdots &c_{n-1}\end{pmatrix}&\mapsto& (c_1,\cdots,c_{n-1}),
\end{eqnarray*}
which is a surjective submersion, proved by James.

We first show that $\pi$ is surjective. For any $\textbf{c}=(c_1,\cdots, c_n)\in S^{8n-1}(1)$, define
$$\cos^2\theta:=|c_1|^2+\cdots+|c_{n-1}|^2,\quad \sin^2\theta:=|c_n|^2.$$
Obviously,  if $\cos\theta=0$, then $\textbf{c}=(0,\cdots, 0, c_n)$ and $\pi^{-1}(\textbf{c})\cong V_2(\mathbb{O}^{n-1})$.

If $\sin\theta=0$, \emph{i.e.}, $\textbf{c}=(c_1, \cdots, c_{n-1}, 0)$, we can choose $A=\begin{pmatrix}a_1&\cdots&a_{n-1}&0\\
0&\cdots&0&1\\c_1&\cdots& c_{n-1}&0\end{pmatrix}$, where $\begin{pmatrix}a_1&\cdots&a_{n-1}\\c_1&\cdots&c_{n-1}\end{pmatrix}\in p^{-1}(c_1, \cdots, c_{n-1})$. It is clear that $A\in V_3(\mathbb{O}^n)\cong\Omega_{8n,8}$ and $\pi(A)=\textbf{c}$.

If $\sin\theta\neq 0$ and $\cos\theta\neq 0$, for $\textbf{c}=(c_1, \cdots, c_{n-1}, c_n)$, we choose
$$\begin{pmatrix}a_1&\cdots&a_{n-1}\\\frac{1}{\cos\theta}c_1&\cdots& \frac{1}{\cos\theta}c_{n-1}\end{pmatrix}\in p^{-1}(\frac{1}{\cos\theta}c_1~\cdots~\frac{1}{\cos\theta}c_n),$$
and
$$A=\begin{pmatrix}a_1&\cdots&c_{n-1}&0\\
\lambda c_1&\cdots&\lambda c_{n-1}&\mu c_n\\c_1&\cdots&c_{n-1}&c_n\end{pmatrix}$$
with $\lambda=-\frac{\sin\theta}{\cos\theta}$, $\mu=\frac{\cos\theta}{\sin\theta}$.
Clearly, $A\in V_3(\mathbb{O}^n)\cong\Omega_{8n,8}$ and $\pi(A)=\textbf{c}$.

In conclusion, $\pi: \Omega_{8n,8}\rightarrow S^{8n-1}(1)$ is surjective.

Next, we will show that $\pi$ is not submersive. It suffices to find a point $A_0\in V_3(\mathbb{O}^n)=\Omega_{8n,8}$ with $\pi(A_0)=\textbf{c}_0$, and a certain $Y\in T_{\textbf{c}_0}S^{8n-1}(1)$ such that there exists no $X\in T_{A_0}V_3(\mathbb{O}^n)$ satisfying $d\pi_{A_0}(X)=Y$ under the tangent map $d\pi_{A_0}:~~T_{A_0}V_3(\mathbb{O}^n)\longrightarrow T_{\textbf{c}_0}S^{8n-1}(1)$.

Take $A_0=\left(\begin{matrix}\textbf{a}\\\textbf{b}\\\textbf{c}\end{matrix}\right)=\frac{1}{\sqrt{2}}\begin{pmatrix}e_2&-e_6&0&\cdots&0\\e_1&-e_5& 0&\cdots&0\\1&e_4&0&\cdots&0\end{pmatrix}\in V_3(\mathbb{O}^n)$ with $\pi(A_0)=\textbf{c}_0=\frac{1}{\sqrt{2}}(1,e_4,0,\cdots, 0)\in S^{8n-1}(1)$, 
and choose $Y=e_2\textbf{a}=\frac{1}{\sqrt{2}}(-1, e_4, 0,\cdots, 0)$. Clearly, $\langle Y, \textbf{c}_0 \rangle=0$, and thus $Y\in T_{\textbf{c}_0}S^{8n-1}(1)$. Suppose that there exists $X=\left(\begin{matrix}\textbf{x}_1\\\textbf{x}_2\\\textbf{x}_3\end{matrix}\right)\in T_{A_0}V_3(\mathbb{O}^n)$ with $\textbf{x}_i=(x_{i1},\cdots,x_{in})\in \mathbb{O}^n$, such that
$$d\pi_{A_0}(X)=\textbf{x}_3=Y=e_2\textbf{a}.$$
Observe that $X\in T_{A_0}V_3(\mathbb{O}^n)$ if and only if $X$ is orthogonal to the following normal vectors at $A_0$
$$\nabla \omega_1=\begin{pmatrix} 2\textbf{a}\\0\\0\end{pmatrix},\quad
\nabla \omega_2=\begin{pmatrix} 0\\2\textbf{b}\\0\end{pmatrix},\quad
\nabla \omega_3=\begin{pmatrix} 0\\0\\2\textbf{c}\end{pmatrix},\qquad $$
$$\qquad\qquad \nabla f_i=\begin{pmatrix} e_i\textbf{b}\\ -e_i\textbf{a}\\0\end{pmatrix},
\nabla g_i=\begin{pmatrix} 0\\e_i\textbf{c}\\ -e_i\textbf{b}\end{pmatrix},
\nabla h_i=\begin{pmatrix} -e_i\textbf{c}\\ 0\\e_i\textbf{a}\end{pmatrix},~~~i=0, 1,\cdots, 7.
$$
Therefore, $\textbf{x}_1, \textbf{x}_2, \textbf{x}_3$ satisfy the following equalities
\begin{eqnarray}
\langle \textbf{x}_1, \textbf{a} \rangle&=&\langle \textbf{x}_2, \textbf{b} \rangle=0 \nonumber\\
\langle \textbf{x}_1, e_i\textbf{b} \rangle&=&\langle \textbf{x}_2, e_i\textbf{a} \rangle, \quad i=0, 1,\cdots, 7,\label{2.}\\
\langle \textbf{x}_2, e_i\textbf{c} \rangle&=&\langle \textbf{x}_3, e_i\textbf{b} \rangle,\quad i=0, 1,\cdots, 7,\label{3.}\\
\langle \textbf{x}_1, e_i\textbf{c} \rangle&=&\langle \textbf{x}_3, e_i\textbf{a} \rangle,\quad i=0, 1,\cdots, 7.\label{4.}
\end{eqnarray}
Notice that
\begin{equation}\label{1.}e_3\textbf{b}=e_2\textbf{c}, \quad e_3\textbf{a}=-e_1\textbf{c}, \quad e_2\textbf{a}=e_1\textbf{b}.
\end{equation}
Thus it follows from (\ref{3.}) and (\ref{1.}) that
$$
\langle \textbf{x}_2, e_3\textbf{a}\rangle=\langle \textbf{x}_2, -e_1\textbf{c}\rangle=-\langle \textbf{x}_3, e_1\textbf{b}\rangle=-\langle \textbf{x}_3, e_2\textbf{a}\rangle=-|e_2\textbf{a}|^2.$$
On the other hand, following from (\ref{2.}), (\ref{4.}) and (\ref{1.}), we also have
$$\langle \textbf{x}_2, e_3\textbf{a}\rangle=\langle \textbf{x}_1, e_3\textbf{b}\rangle=\langle \textbf{x}_1, e_2\textbf{c}\rangle=\langle \textbf{x}_3, e_2\textbf{a}\rangle=|e_2\textbf{a}|^2.$$
Consequently, $|e_2\textbf{a}|^2=0$, which gives a contradiction and hence the proof.

\end{proof}


\section{Regular and critical points of $F$ in $V_k(\mathbb{O}^n)$ }\label{sec4}
In Section \ref{sec3}, we proved that $\Omega_{l,m}$ is a regular submanifold in $\mathbb{R}^{3l}$ by successfully realizing
$\Omega_{l,m}$ as inverse image of a certain regular value for natural defined map on $\mathbb{R}^{3l}$. As a consequence, $V_3(\mathbb{O}^n)$ is a smooth manifold. For the next step to attack Question (2) of James, an intriguing problem is to study whether the similar idea works for $V_k(\mathbb{O}^n)$ with $k\geqslant 4$. Unfortunately, the analogous argument in Section \ref{sec3} fails in this case.
In this section, under the condition $k\geqslant 4$, we will pay attention to the distribution of critical points and regular points of the canonical map $F$ (defined in (\ref{Fk}) below) in $V_k(\mathbb{O}^n)=F^{-1}(0)$, which might be useful to understand the topological structure of the space itself. Of course, by the Whitney theorem in real algebraic geometry, the algebraic set $V_k(\mathbb{O}^n)$
is always decomposed as a finite disjoint union of smooth manifolds.

Using the identification $V_k(\mathbb{O}^n)\cong \{A\in M_{k\times n}(\mathbb{O})~|~A\overline{A}^t=I_k\}$,
we again construct a smooth map
\begin{eqnarray}\label{Fk}
F: M_{k\times n}(\mathbb{O})\cong \mathbb{R}^{8kn}&\rightarrow& \mathfrak{h}(k, \mathbb{O})\cong \mathbb{R}^{k(4k-3)}, \\
A&\mapsto& A\overline{A}^t-I_k,\nonumber
\end{eqnarray}
where
$\mathfrak{h}(k, \mathbb{O}):=\{X\in M_{k\times k}(\mathbb{O})~|~X=\overline{X}^t\}$.
Denote by $\mathcal{C}(F)$ the set of all critical points of $F$ in $V_k(\mathbb{O}^n)=F^{-1}(0)$, and $\mathcal{R}(F)=V_k(\mathbb{O}^n)\backslash \mathcal{C}(F)$.

Let us introduce an $O(k)$-action $\psi$ and an $O(n)$-action $\varphi$ on $V_k(\mathbb{O}^n)$.
For any $S\in O(k)$, $T\in O(n)$ and $A\in V_k(\mathbb{O}^n)\subset M_{k\times n}(\mathbb{O})$, define
\begin{eqnarray*}
\psi: O(k)\times V_k(\mathbb{O}^n)&\longrightarrow& V_k(\mathbb{O}^n)\\
(~~S, \quad A~~)\quad &\mapsto&\psi(S, A):=SA,
\end{eqnarray*}
and
\begin{eqnarray*}
\varphi: O(n)\times V_k(\mathbb{O}^n)&\longrightarrow& V_k(\mathbb{O}^n)\\
(~~T, \quad A~~)\quad &\mapsto&\varphi(T, A):=AT^t.
\end{eqnarray*}
Clearly, $\psi$ and $\varphi$ are well-defined on $V_k(\mathbb{O}^n)$, respectively.

\begin{rem}
It is interesting that $\psi$ is a free $O(k)$-action, but $\varphi$ is not free in general.
For instance, the $O(4)$-action of $\varphi$ on $V_4(\mathbb{O}^4)$ is not free. In fact, at the point $A=\frac{1}{\sqrt{2}}\small{\left(\begin{matrix}e_7 &~~e_3&0&0\\ e_2 &-e_6&0&0\\e_1& -e_5&0&0\\ ~1& ~~e_4&0&0\end{matrix}\right)}\in V_4(\mathbb{O}^4)$, the isotropy subgroup at $A$ contains
the subgroup $\{I_2\}\times O(2)$ in $O(4)$, which is not trivial.
\end{rem}

The following result yields that both the sets $\mathcal{C}(F)$ and $\mathcal{R}(F)$ are not discrete. 
\begin{prop}\label{inv}
The sets $\mathcal{C}(F)$ and $\mathcal{R}(F)$ are both invariant under the $O(k)$-action $\psi$ and the $O(n)$-action $\varphi$ on $V_k(\mathbb{O}^n)$, respectively.
\end{prop}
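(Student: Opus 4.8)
The plan is to deduce the statement from the fact that $F$ is equivariant with respect to both actions, so that composing with the corresponding diffeomorphisms of the ambient spaces cannot change the rank of the differential $dF$.

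First I would record the two equivariance identities on the ambient space $M_{k\times n}(\mathbb{O})\cong\mathbb{R}^{8kn}$. For $S\in O(k)$ write $\psi_S:=\psi(S,\cdot)$; since the entries of $S$ and $S^t$ are real — and real scalars lie in the nucleus and centre of $\mathbb{O}$, hence associate and commute with every octonion — one checks that $\overline{SA}^t=\overline{A}^t S^t$ and, using $SS^t=I_k$,
\begin{equation*}
F(SA)=(SA)\,\overline{SA}^t-I_k=S\big(A\overline{A}^t\big)S^t-I_k=S\big(A\overline{A}^t-I_k\big)S^t=S\,F(A)\,S^t.
\end{equation*}
Likewise, for $T\in O(n)$ with $\varphi_T:=\varphi(T,\cdot)$, one has $\overline{AT^t}^t=T\overline{A}^t$ and, using $T^tT=I_n$,
\begin{equation*}
F(AT^t)=(AT^t)\,\overline{AT^t}^t-I_k=A\,(T^tT)\,\overline{A}^t-I_k=A\overline{A}^t-I_k=F(A).
\end{equation*}
Introducing $L_S\colon\mathfrak{h}(k,\mathbb{O})\to\mathfrak{h}(k,\mathbb{O})$, $X\mapsto SXS^t$ — a well-defined $\mathbb{R}$-linear automorphism, since $S$ real implies $SXS^t$ is again Hermitian — these identities say $F\circ\psi_S=L_S\circ F$ and $F\circ\varphi_T=F$.

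Next I would differentiate. The maps $\psi_S$ and $\varphi_T$ are $\mathbb{R}$-linear automorphisms of $\mathbb{R}^{8kn}$, hence diffeomorphisms restricting to $V_k(\mathbb{O}^n)=F^{-1}(0)$, and $L_S$ is an $\mathbb{R}$-linear automorphism of $\mathbb{R}^{k(4k-3)}$. The chain rule gives, at each $A\in F^{-1}(0)$,
\begin{equation*}
dF_{SA}\circ(d\psi_S)_A=(dL_S)_{F(A)}\circ dF_A,\qquad dF_{AT^t}\circ(d\varphi_T)_A=dF_A,
\end{equation*}
where $(d\psi_S)_A$, $(d\varphi_T)_A$ and $(dL_S)_{F(A)}$ are linear isomorphisms. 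Hence $dF_{SA}$ and $dF_{AT^t}$ have the same rank as $dF_A$; in particular $dF$ is surjective at $A$ if and only if it is surjective at $SA$ and at $AT^t$. Since $\psi$ and $\varphi$ preserve $F^{-1}(0)$, this is precisely the assertion that $\mathcal{C}(F)$, and therefore its complement $\mathcal{R}(F)=V_k(\mathbb{O}^n)\setminus\mathcal{C}(F)$, are invariant under both $\psi$ and $\varphi$.

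There is essentially no deep obstacle here; the only point I would treat carefully is that the two displayed computations of $F(SA)$ and $F(AT^t)$ remain valid despite the non-associativity of $\mathbb{O}$. Every regrouping used moves only a real scalar factor (an entry of $S$, $S^t$, $T$ or $T^t$) past octonionic entries, and such moves are legitimate because $\mathbb{R}$ sits in the centre of $\mathbb{O}$; no genuinely non-associative rearrangement of three octonions is ever performed. Once that is granted the proof is purely formal.
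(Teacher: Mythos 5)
Your argument is correct and follows essentially the same route as the paper: both establish the equivariance identities $F(SA)=S\,F(A)\,S^t$ and $F(AT^t)=F(A)$ and then conclude via the chain rule that the rank (equivalently the kernel dimension) of $dF$ is preserved. Your careful remark that the regroupings only move real scalars, which lie in the centre of $\mathbb{O}$, is a welcome justification that the paper leaves implicit.
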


\begin{proof}
We need only to consider $\mathcal{C}(F)$. Given $A\in \mathcal{C}(F)\subset M_{k\times n}(\mathbb{O})$, we will show $\psi(S, A), \varphi(T, A)\in \mathcal{C}(F)$ for any $S\in O(k)$ and $T\in O(n)$.

Observe that for $F: \mathbb{R}^{8kn}\cong M_{k\times n}(\mathbb{O})\rightarrow \mathbb{R}^{k(4k-3)}$,
$$F(SA)=(SA)\overline{(SA)}^t-I_k=S(F(A))S^t\in \mathfrak{h}(k, \mathbb{O})\cong \mathbb{R}^{4k^2-3k}.$$
Thus for $dF_A: M_{k\times n}(\mathbb{O})\rightarrow \mathfrak{h}(k, \mathbb{O})$ and $dF_{SA}: M_{k\times n}(\mathbb{O})\rightarrow \mathfrak{h}(k, \mathbb{O})$,
the dimensions satisfy
$\mathrm{dim~Ker}(dF_A)=\mathrm{dim~Ker}(dF_{SA})$. Consequently, $A\in \mathcal{C}(F)$ implies $SA \in \mathcal{C}(F)$, \emph{i.e.}, $\mathcal{C}(F)$ is invariant under the $O(k)$-action $\psi$.

Similarly, for the $O(n)$-action $\varphi$,
$$F(AT^t)=(AT^t)\overline{(AT^t)}^t-I_k=F(A)\in \mathfrak{h}(k, \mathbb{O})\cong \mathbb{R}^{4k^2-3k},$$
and $\mathrm{dim~Ker}(dF_A)=\mathrm{dim~Ker}(dF_{AT^t})$. Hence, $A\in \mathcal{C}(F)$ implies $AT^t \in \mathcal{C}(F)$, \emph{i.e.}, $\mathcal{C}(F)$ is invariant under the $O(n)$-action $\varphi$.
\end{proof}
\begin{rem}\label{ok}
For $n=k$, as a direct consequence of Proposition \ref{inv}, $O(k)\subset \mathcal{R}(F)$.
\end{rem}

For $A\in V_{k}(\mathbb{O}^n)$, \emph{i.e.}, $A\overline{A}^t=I_k$, define a real vector space
\begin{equation}\label{VA}
V_A:=\{X\in M_{k\times n}(\mathbb{O})~|~A\overline{X}^t+X\overline{A}^t=0\}.
\end{equation}
For $n=k=2$, it is known that $F^{-1}(0)=V_2(\mathbb{O}^2)\cong M_+^{22}$ is a regular submanifold of dimension $22$ in $M_{2\times 2}(\mathbb{O})\cong\mathbb{R}^{32}$ (cf. \cite{QTY22}). Actually, in this case one can prove directly that $V_A=T_AV_2(\mathbb{O}^2)$,
the tangent space of $V_2(\mathbb{O}^2)$ at $A$. Since $F$ consists of quadratic polynomials, for $A\in M_{k\times n}(\mathbb{O})$, the the following formula holds clearly
$$\mathrm{dim}V_A+\mathrm{rank}J_A=8kn,$$
where $J_A$ is the Jacobian matrix of $F$ at $A$.
Therefore, to seek regular or critical points of the smooth map $F$, it suffices to determine the dimension of the space $V_A$.


By the natural inclusions 
$\mathbb{C}\subset \mathbb{H}\subset \mathbb{O}$, one has
the induced inclusions $U(k)\subset Sp(k)\subset V_k(\mathbb{O}^k)$. Using the definition of $V_A$, we get

\begin{prop}\label{uk}
Any point $A$ in $U(k)\subset V_k(\mathbb{O}^k)$ belongs to $\mathcal{R}(F)$.
\end{prop}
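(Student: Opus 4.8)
*Any point $A$ in $U(k)\subset V_k(\mathbb{O}^k)$ belongs to $\mathcal{R}(F)$.*

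My plan is to compute $\dim V_A$ directly for $A \in U(k)$ and show it equals $\dim V_k(\mathbb{O}^k) = k(4k-3)$... wait, let me reconsider. Actually for $n = k$, we want $\operatorname{rank} J_A = k(4k-3)$, i.e. $dF_A$ is surjective, equivalently $\dim V_A = 8k^2 - k(4k-3) = 4k^2 + 3k$. Hmm, let me think about what $V_A$ looks like.

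For $A \in U(k)$, we have $A \in M_{k\times k}(\mathbb{C}) \subset M_{k\times k}(\mathbb{O})$ with $A\bar A^t = I_k$ (the complex conjugate-transpose). The defining equation of $V_A$ is $A\bar X^t + X \bar A^t = 0$. The first move is to multiply on the right by $A$ (using $\bar A^t A = I_k$, valid since $A \in U(k)$): writing $Y = \bar A^t X$ (so $X = AY$, using associativity which holds because $A$ has complex entries — I'd invoke this carefully, perhaps decomposing $\mathbb{O} = \mathbb{C} \oplus \mathbb{C}^{\perp}$ as a $\mathbb{C}$-bimodule), the equation should become a condition on $Y$ of the form $\bar Y^t + Y = 0$ up to the non-associativity correction terms. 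The key technical point is that since $A$ has entries in $\mathbb{C}$, left/right multiplication by $A$ and by $\bar A^t$ on octonionic matrices is $\mathbb{C}$-linear and behaves associatively when sandwiched with complex scalars, so the substitution $X \mapsto AY$ is a well-defined $\mathbb{R}$-linear isomorphism of $M_{k\times k}(\mathbb{O})$ carrying $V_A$ isomorphically onto $V_{I_k} = \{Y \in M_{k\times k}(\mathbb{O}) : Y + \bar Y^t = 0\}$.

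Then I would just compute $\dim V_{I_k}$: an octonionic matrix $Y$ with $Y + \bar Y^t = 0$ has purely imaginary octonions on the diagonal ($k$ choices, each contributing $7$ real dimensions) and arbitrary octonions below the diagonal with the above-diagonal entries determined ($\binom{k}{2}$ entries, each $8$ real dimensions). Hence $\dim V_{I_k} = 7k + 8\cdot\frac{k(k-1)}{2} = 7k + 4k^2 - 4k = 4k^2 + 3k$. This matches $8k^2 - k(4k-3)$, so $\operatorname{rank} J_A = k(4k-3) = \dim \mathfrak{h}(k,\mathbb{O})$, meaning $dF_A$ is surjective and $A \in \mathcal{R}(F)$. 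Alternatively, and perhaps more cleanly, one can avoid the dimension bookkeeping by noting that $I_k \in \mathcal{R}(F)$ is the statement that $U(k)$ (indeed $O(k)$, $Sp(k)$) contains a regular point, which follows from Remark \ref{ok} for $n=k$; then Proposition \ref{inv} propagates regularity along the $O(k)$-orbit, but since $U(k)$ is not a single $O(k)$-orbit I still need the substitution argument above to reduce a general $A \in U(k)$ to $I_k$.

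The main obstacle is handling non-associativity correctly in the change of variables $X \mapsto AY$: one must verify that $A\overline{(AY)}^t + (AY)\bar A^t = 0$ is genuinely equivalent to $Y + \bar Y^t = 0$ when $A$ has complex entries. The safe route is to observe that all products appearing involve at most two "truly octonionic" factors in any monomial once the complex matrix $A$ is inserted, so the Artin/alternativity property (any subalgebra generated by two elements is associative) applies entry-by-entry; I would state this as a lemma or simply remark that the computation reduces to the known complex-linear-algebra identity $\overline{(AY)}^t = \bar Y^t \bar A^t$ which holds here because conjugation on $\mathbb{O}$ is an anti-automorphism and the entries of $A$ complex-commute past everything in the relevant way. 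Once that reduction is granted, the rest is the elementary dimension count above.
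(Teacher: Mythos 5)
Your proposal is correct and takes essentially the same route as the paper: the paper also reduces $V_A$ to $V_{I_k}$ by multiplying with the complex--unitary $A$ (it uses the map $L(X)=A\overline{X}^t$, proves injectivity via the same associativity fact $x(yz)=(xy)z$ for $x,y\in\mathbb{C}$, $z\in\mathbb{O}$ that you justify by Artin's theorem, and concludes from $\dim V_A\geqslant \dim M_{k\times k}(\mathbb{O})-\dim\mathfrak{h}(k,\mathbb{O})=\dim V_{I_k}$). The only cosmetic difference is that you verify the change of variables $X\mapsto AY$ in both directions and count $\dim V_{I_k}=4k^2+3k$ explicitly, whereas the paper replaces the surjectivity check by the rank--nullity lower bound.
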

\begin{proof}
For $A\in M_{k\times k}(\mathbb{O})$ satisfying $A\overline{A}^t=I_k$, the kernel of the tangent map $dF_A: M_{k\times k}(\mathbb{O}) \rightarrow\mathfrak{h}(k, \mathbb{O})$ at $A$ is $V_A$ in (\ref{VA}).

The equality $\mathrm{dim} V_{I_k}=\mathrm{dim} M_{k\times k}(\mathbb{O})-\mathrm{dim} \mathfrak{h}(k, \mathbb{O})$ implies $I_k \in \mathcal{R}(F)$. We will show for $A\in U(k)$, $\mathrm{dim} V_A=\mathrm{dim} V_{I_k}$. Define $L: V_A\rightarrow V_{I_k}$ by $L(X)=A\overline{X}^t$. Clearly, $L$ is a well-defined real linear transformation.
Now, for $A\in U(k)$ and $X\in V_A$, $A\overline{X}^t=0$ implies
$$\overline{X}^t=(\overline{A}^tA)\overline{X}^t=\overline{A}^t(A\overline{X}^t)=0,$$
where the second equality is guaranteed by the formula that $x(yz)=(xy)z$ if $x, y\in \mathbb{C}\subset \mathbb{H}\subset \mathbb{O}$ and $z\in\mathbb{O}$.
It follows that $L$ is injective. 
Therefore, the fact $\mathrm{dim} V_A\geqslant \mathrm{dim} M_{k\times k}(\mathbb{O})-\mathrm{dim} \mathfrak{h}(k, \mathbb{O})=\mathrm{dim} V_{I_k}$ implies that $L$ is a real linear isomorphism, and thus $\mathrm{dim} V_A=\mathrm{dim} V_{I_k}$.
\end{proof}


The following result shows that the set $\mathcal{C}(F)$ is not empty in $V_k(\mathbb{O}^n)$ for $k\geqslant 4$. This unsatisfactory phenomenon does not occur when $k=2,3$.

\begin{prop}
Assume $n\geqslant k\geqslant 4$. For the smooth map $F$ in (\ref{Fk}), 
there exists at least one critical point in $V_k(\mathbb{O}^n)$. In fact, $0\in \mathfrak{h}(k, \mathbb{O})$ is a critical value of $F$.
\end{prop}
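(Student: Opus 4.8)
The plan is to exhibit a single point $A_{0}\in V_{k}(\mathbb{O}^{n})$ at which $dF$ fails to be surjective, so that $A_{0}\in\mathcal{C}(F)$ and $0$ is a critical value. The candidate is the ``exotic'' matrix
$$A:=\frac{1}{\sqrt 2}\begin{pmatrix} e_{7}&e_{3}\\ e_{2}&-e_{6}\\ e_{1}&-e_{5}\\ 1&e_{4}\end{pmatrix}\in V_{4}(\mathbb{O}^{2})$$
of Remark \ref{Y42} (so $A\overline{A}^{t}=I_{4}$), padded by an identity block:
$$A_{0}:=\begin{pmatrix} A & \mathbf{0}\\ \mathbf{0}& I_{k-4}\end{pmatrix}\in M_{k\times n}(\mathbb{O}),\qquad A_{0}\overline{A_{0}}^{t}=I_{k},$$
which is possible since $n\geqslant k\geqslant 4$ (the block $A$ occupies the first four rows and the first two columns, $I_{k-4}$ sits on the rest of the diagonal, and all remaining entries are zero). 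First I would record the block identity: writing $p\colon\mathfrak{h}(k,\mathbb{O})\to\mathfrak{h}(4,\mathbb{O})$ for the projection onto the upper-left $4\times4$ block and $X'$ for the upper-left $4\times2$ block of $X\in M_{k\times n}(\mathbb{O})$, one checks directly from $dF_{A_{0}}(X)=A_{0}\overline{X}^{t}+X\overline{A_{0}}^{t}$ that $p\big(dF_{A_{0}}(X)\big)=A\overline{X'}^{t}+X'\overline{A}^{t}$. Since $X'$ ranges over all of $M_{4\times 2}(\mathbb{O})$, surjectivity of $dF_{A_{0}}$ would force surjectivity of $U\mapsto A\overline{U}^{t}+U\overline{A}^{t}$, $M_{4\times 2}(\mathbb{O})\to\mathfrak{h}(4,\mathbb{O})$. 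Hence it suffices to show that $A$ is a critical point of $F\colon M_{4\times2}(\mathbb{O})\to\mathfrak{h}(4,\mathbb{O})$.

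The second step reduces this to an existence statement about Hermitian matrices. For $Y=\overline{Y}^{t}\in\mathfrak{h}(4,\mathbb{O})$, using that $\mathrm{Re}$ on $\mathbb{O}$ is symmetric and insensitive to the bracketing of triple products, a short computation gives
$$\langle Y,\;A\overline{U}^{t}+U\overline{A}^{t}\rangle=2\langle Y,\;A\overline{U}^{t}\rangle=2\,\mathrm{Re}\sum_{j,l}U_{jl}\,\big(\overline{A}^{t}Y\big)_{lj}$$
for every $U\in M_{4\times2}(\mathbb{O})$. Therefore $A$ is a regular point of $F$ if and only if the only Hermitian $Y$ with $\overline{A}^{t}Y=0$ is $Y=0$; and since $(\overline{A}^{t}Y)^{*}=YA$ for $Y=\overline{Y}^{t}$, this is equivalent to saying that the only Hermitian $Y\in\mathfrak{h}(4,\mathbb{O})$ annihilating both columns of $A$ is $Y=0$. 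Over $\mathbb{R},\mathbb{C}$ or $\mathbb{H}$ this would always hold, because $YA=0$ together with $A\overline{A}^{t}=I_{4}$ and \emph{associativity} of matrix multiplication forces $Y=Y(A\overline{A}^{t})=(YA)\overline{A}^{t}=0$; over $\mathbb{O}$ that chain of equalities breaks down.

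The third and decisive step is to exhibit a nonzero Hermitian $Y\in\mathfrak{h}(4,\mathbb{O})$ with $Y\,A_{\cdot1}=Y\,A_{\cdot2}=0$, where $A_{\cdot1}=\tfrac{1}{\sqrt2}(e_{7},e_{2},e_{1},1)^{t}$ and $A_{\cdot2}=\tfrac{1}{\sqrt2}(e_{3},-e_{6},-e_{5},e_{4})^{t}$ are the two columns of $A$; such a $Y$ cannot have all its entries in a common quaternion subalgebra, so it must be genuinely octonionic. Imposing $YA_{\cdot1}=YA_{\cdot2}=0$ on a general Hermitian $Y$ (which carries $52$ real parameters) produces an explicit homogeneous linear system over $\mathbb{R}$; I would write this system out using the Cayley--Dickson multiplication table recorded in \eqref{basis} and solve it to exhibit one concrete nonzero $Y$. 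With such a $Y$ in hand, $dF_{A}$, and hence $dF_{A_{0}}$, is not surjective, so $A_{0}\in\mathcal{C}(F)$ and $0\in\mathfrak{h}(k,\mathbb{O})$ is a critical value of $F$.

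The main obstacle is exactly this last step: one must verify by direct computation that the seemingly over-determined system $YA_{\cdot1}=YA_{\cdot2}=0$ nevertheless admits a nonzero solution. Conceptually the reason it can be solved is that, unlike over $\mathbb{R},\mathbb{C},\mathbb{H}$, octonionic matrix multiplication is non-associative, so the identity $A\overline{A}^{t}=I_{4}$ no longer forces $Y=Y(A\overline{A}^{t})=(YA)\overline{A}^{t}$; the Hermitian annihilator $Y$ is the infinitesimal shadow of precisely the non-associativity that makes $V_{4}(\mathbb{O}^{2})$ (and hence $V_{k}(\mathbb{O}^{n})$ for $k\geqslant 4$) behave so differently from the real, complex and quaternionic cases.
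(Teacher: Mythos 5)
Your reductions are sound: padding the $4\times 2$ block into $V_k(\mathbb{O}^n)$, projecting $dF_{A_0}$ onto the upper-left $4\times 4$ block of $\mathfrak{h}(k,\mathbb{O})$, and identifying non-surjectivity of $U\mapsto A\overline{U}^t+U\overline{A}^t$ with the existence of a nonzero Hermitian $Y$ satisfying $YA=0$ are all correct, and the overall strategy (exhibit one explicit degenerate point and pad it with an identity block) is the same as the paper's. But the argument stops exactly where the content of the proposition begins. The existence of a nonzero $Y\in\mathfrak{h}(4,\mathbb{O})$ with $YA_{\cdot1}=YA_{\cdot2}=0$ is a heavily over-determined linear problem: $64$ real equations in $52$ unknowns, and in fact, arguing as in Section \ref{sec4}, any such $Y$ must have zero diagonal and purely imaginary off-diagonal entries (so only $42$ unknowns), and every row of $Y$ must be nonzero because $V_3(\mathbb{O}^2)=\Omega_{16,8}$ is a smooth manifold. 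You neither exhibit a solution nor give any argument that one exists; the remark that non-associativity blocks the chain $Y=Y(A\overline{A}^t)=(YA)\overline{A}^t$ only shows that the usual proof of $Y=0$ fails, not that a nonzero $Y$ exists. As written, nothing is proved, and it is not verified that your particular base point from Remark \ref{Y42} is in fact critical.

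The paper sidesteps this by choosing a point where the verification is tractable: $B_4=\mathrm{diag}(A,A)\in V_4(\mathbb{O}^4)$ with $A=\frac{1}{\sqrt2}\left(\begin{smallmatrix}e_7&e_3\\1&e_4\end{smallmatrix}\right)\in V_2(\mathbb{O}^2)$ as in (\ref{B4}). The block-diagonal structure decomposes the kernel $V_{B_4}$ of $dF_{B_4}$ into pieces controlled by the single $2\times2$ block, and a direct count gives $\dim V_{B_4}=80>76=\dim V_{I_4}$, so $B_4$ is critical; alternatively, the paper exhibits an explicit skew-symmetric imaginary $\xi$ with $\xi B_4=0$ as in (\ref{xiA}). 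To salvage your version you must actually perform the deferred linear algebra: either produce the annihilator $Y$ explicitly from the multiplication table (\ref{basis}), or compute $\dim V_A$ for your $4\times2$ matrix and show it exceeds $64-52=12$. Until one of these is carried out, the proposition is not established.
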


\begin{proof}
We start with the case for $n=k=4$. It is known that $I_4$ is a regular point with $\dim V_{I_4}=76$.
Set
\begin{equation}\label{B4}
B_4:=\left(\begin{matrix}A&\\&A\end{matrix} \right)\,\, \textrm{with} \,\,A=\frac{1}{\sqrt{2}}\left( \begin{matrix}e_7&e_3\\1&e_4\end{matrix}\right)\in V_2(\mathbb{O}^2).
\end{equation}
Cleary $B_4\in V_4(\mathbb{O}^4)$.
For $X\in M_{4\times 4}(\mathbb{O})$,
write $\overline{X}^t:=\left( \begin{matrix}W_{2\times 2}&U_{2\times 2}\\ V_{2\times 2}&Z_{2\times 2}\end{matrix}\right)$.
By definition (\ref{VA}), $X\in V_{B_4}$
if and only if
$$AW+\overline{AW}^t=0,\quad AZ+\overline{AZ}^t=0,\quad AU+\overline{AV}^t=0.$$
It follows that $V_{B_4}$ has a decomposition $V_1\oplus V_2\oplus V_3$,
where $$V_1:=\{X\in V_{B_4}~|~AW+\overline{AW}^t=0, U=0, V=0, Z=0\},$$
$$V_2:=\{X\in V_{B_4}~|~AZ+\overline{AZ}^t=0, U=0, V=0, W=0\},$$
and
$$V_3:=\{X\in V_{B_4}~|~AU+\overline{AV}^t=0, Z=0, W=0\}.$$
Following from the discussion on $V_A$ for $A\in V_2(\mathbb{O}^2)$,
we know that $\mathrm{dim} V_1=\mathrm{dim} V_2=22$. To determine the dimension of $V_3$, it is useful to decompose
$V_3$ into $V_{31}\oplus V_{32}$, where
$$V_{31}:=\{X\in V_{B_4}~|~AU+\overline{AV}^t=0, U=V,Z=0, W=0\},$$
and
$$V_{32}:=\{X\in V_{B_4}~|~AU+\overline{AV}^t=0, U=-V,Z=0, W=0\}.$$
It is direct and easy to verify
$\mathrm{dim}~V_{31}=22$ and $\mathrm{dim}~V_{32}=14$.
Therefore, $\dim V_{B_4}= 22+22+22+14=80>76=\dim V_{I_4}$. Equivalently speaking, ${B_4}\in V_4(\mathbb{O}^4)$ is a critical point of $F$.


More generally, for $V_k(\mathbb{O}^n)$ with $n\geqslant k\geqslant 4$, it follows easily that $I_{k\times n}:=\left( I_k, \textbf{0}_{k\times (n-k)}\right)\in V_k(\mathbb{O}^n)$ is a regular point of $F$, and $\dim V_{I_{k\times n}}=8kn-4k^2+3k$. Moreover, choosing $B_{k\times n}=\begin{pmatrix}\left(\begin{array}{cc}B_4&\\&I_{k-4} \end{array}\right), ~~\textbf{0}_{k\times (n-k)}\end{pmatrix}\in V_k(\mathbb{O}^n)$,
one can verify that $\dim V_{B_{k\times n}}= 8kn-4k^2+3k+4>\dim V_{I_{k\times n}}$. It follows that $B_{k\times n}$ is a critical point of $F$.

\end{proof}

From now on, we will focus on the case of $V_4(\mathbb{O}^4)$. For convenience, write $A\in V_4(\mathbb{O}^4)$ as $A=\left(\begin{matrix}\textbf{a}\\ \textbf{b}\\ \textbf{c}\\ \textbf{d}\end{matrix}\right)=\left(\begin{matrix}a_{1}&a_{2}&a_{3}& a_4\\b_{1}&b_{2}&b_{3}& b_4\\c_{1}&c_{2}&c_{3}&c_4\\d_1&d_2&d_3&d_4\end{matrix}\right)\in M_{4\times 4}(\mathbb{O})$. Similarly as what we did in Section \ref{sec3}, the following real functions corresponding to the components of $F(A)=A\overline{A}^t-I_4$ will be considered
$$\omega_1=|\textbf{a}|^2-1, \quad \omega_2=|\textbf{b}|^2-1, \quad \omega_3=|\textbf{c}|^2-1, \quad \omega_4=|\textbf{d}|^2-1,$$
$$~f_0=\langle \textbf{a}, \textbf{b} \rangle, \,\,\,~~ g_0=\langle \textbf{a}, \textbf{c} \rangle,~~\,\,\, h_0=\langle \textbf{a}, \textbf{d} \rangle, ~~\,\,\, p_0=\langle \textbf{b}, \textbf{c} \rangle,~~\,\,\, q_0=\langle \textbf{b}, \textbf{d} \rangle,~~\,\,\, r_0=\langle \textbf{c}, \textbf{d}\rangle,\,$$
$$f_i=\langle \textbf{a}, e_i\textbf{b} \rangle, ~~ g_i=\langle \textbf{a}, e_i\textbf{c} \rangle,~~ h_i=\langle \textbf{a}, e_i\textbf{d} \rangle,~~ p_i=\langle \textbf{b}, e_i\textbf{c} \rangle,~~ q_i=\langle \textbf{b}, e_i\textbf{d} \rangle, ~~r_i=\langle \textbf{c}, e_i\textbf{d}\rangle, $$
where
$i=1,\cdots, 7$,  and
$e_i\textbf{a}:=(e_ia_1, e_ia_2, e_i a_3, e_ia_4)$, verbatim for $e_i\textbf{b}$, $e_i\textbf{c}$ and $e_i\textbf{d}$.

As in Theorem \ref{omega}, to check whether $dF_A$ is surjective, it suffices to check whether
$$\nabla f_1,\cdots, \nabla f_7,\cdots\cdots, \nabla r_1,\cdots, \nabla r_7$$
are linearly independent. It is equivalent to check whether the Gram matrix $G$ defined by inner products of $\nabla f_1,\cdots, \nabla f_7,\cdots\cdots, \nabla r_1,\cdots, \nabla r_7$ is positive definite. More precisely, $G$ can be directly calculated to be
\begin{equation}\label{G}
G=\begin{pmatrix}
2I_7& \Phi_{\textbf{bc}}& \Phi_{\textbf{bd}}&\Phi_{\textbf{ca}}&\Phi_{\textbf{da}}&0\\
 \Phi_{\textbf{cb}}&2I_7&\Phi_{\textbf{cd}}&\Phi_{\textbf{ab}}&0&\Phi_{\textbf{da}}\\
\Phi_{\textbf{db}}&\Phi_{\textbf{dc}}&2I_7&0&\Phi_{\textbf{ab}}&\Phi_{\textbf{ac}}\\
 \Phi_{\textbf{ac}}&\Phi_{\textbf{ba}}&0&2I_7&\Phi_{\textbf{cd}}&\Phi_{\textbf{db}}\\
\Phi_{\textbf{ad}}&0&\Phi_{\textbf{ba}}&\Phi_{\textbf{dc}}&2I_7&\Phi_{\textbf{bc}}\\
0&\Phi_{\textbf{ad}}&\Phi_{\textbf{ca}}&\Phi_{\textbf{bd}}&\Phi_{\textbf{cb}}&2I_7  \end{pmatrix},
\end{equation}
where $(\Phi_{\textbf{ab}})_{ij}=:\langle e_i\textbf{a}, e_j\textbf{b}\rangle=-(\Phi_{\textbf{ba}})_{ij}$, $i, j=1,\cdots, 7$.

From another point of view, observe that the gradients $\nabla f_i, \cdots, \nabla r_i$ $(i=1,\cdots, 7)$ are equal to
\begin{eqnarray*}
\nabla f_i&=&(e_i\textbf{b}, ~~-e_i\textbf{a}, \quad 0, \quad~~\,\,\,\,\, \,0\,\,\,),\\
\nabla g_i&=&(~e_i\textbf{c}, \quad 0, ~~\,\,-e_i\textbf{a},\,\,\,\,\, 0\,\,\, ),\\
\nabla h_i&=&(~e_i\textbf{d},\quad 0, \quad\,\,\, 0, \,\,\,-e_i\textbf{a}\,),\\
\nabla p_i&=&(\,\,\,\,0, \quad e_i\textbf{c}, \,\,-e_i\textbf{b}, \,\,\,\,\, 0~~\,\,),\\
\nabla q_i&=&(\,\,\,\,0, \quad e_i\textbf{d}, \quad\, 0, \,\,\,-e_i\textbf{b}\,),\\
\nabla r_i&=&(\,\,\,\,0,\quad \,\,\,0, \,\,\,\,\,\,e_i\textbf{d}, \,\,-e_i\textbf{c}\,).
\end{eqnarray*}
Thus $A\in V_4(\mathbb{O}^4)$ is a critical point of $F$ if and only if there exist $x_1,\cdots, x_6\in \mathrm{Im}\mathbb{O}$, which are not simultaneously zero, such that
\begin{eqnarray*}
x_1\textbf{b}+x_2\textbf{c}+x_3\textbf{d}&=&0,\\
-x_1\textbf{a}+x_4\textbf{c}+x_5\textbf{d}&=&0,\\
-x_2\textbf{a}-x_4\textbf{b}+x_6\textbf{d}&=&0,\\
-x_3\textbf{a}-x_5\textbf{b}-x_6\textbf{c}&=&0.
\end{eqnarray*}
In other words, $A\in V_4(\mathbb{O}^4)$ is a critical point of $F$ if and only if there exists a non-zero skew-symmetric matrix $\xi=\left(\xi_{ij}\right)_{4\times 4}$ with $\xi_{ij}\in\mathrm{Im}\mathbb{O}$, such that
\begin{equation}\label{xiA}
\xi A=0.
\end{equation}
As a matter of fact, each row of $\xi$ is non-zero. To see this, suppose that the first row of $\xi$ is zero, \emph{i.e.}, $\xi=\left(\begin{matrix} 0&0&0&0\\
0&0&x_4&x_5\\0&-x_4&0&x_6\\0&-x_5&-x_6&0\end{matrix}\right)$, and the point $A=\left(\begin{matrix} \textbf{a}\\\textbf{b}\\\textbf{c}\\\textbf{d}\end{matrix}\right)\in V_4(\mathbb{O}^4)$ is a critical point of $F$. Thus $B=\left(\begin{matrix} \textbf{b}\\\textbf{c}\\\textbf{d}\end{matrix}\right)\in V_3(\mathbb{O}^4)$ would be a critical point of the $F$ in (\ref{F}). However, as we have shown in Theorem \ref{omega} that $V_3(\mathbb{O}^4)=\Omega_{32, 8}$ is a smooth manifold, containing no critical point of $F$ in (\ref{F}). There comes a contradiction.

As an application of (\ref{xiA}), we give an alternative proof for the assertion that the point $B_4=\left(\begin{matrix}A&\\&A\end{matrix} \right)\in V_4(\mathbb{O}^4)$ with $A=\frac{1}{\sqrt{2}}\left( \begin{matrix}e_7&e_3\\1&e_4\end{matrix}\right)\in V_2(\mathbb{O}^2)$ in (\ref{B4}) is a critical point of $F$. In fact, choosing $x_1=x_6=0$, $x_2=-x_5=e_1$ and $x_3=x_4=-e_6$, we find that $\xi=\left(\begin{matrix}0&0&e_1&-e_6\\0&0&-e_6&-e_1\\-e_1&e_6&0&0\\e_6&e_1&0&0\end{matrix}\right)$ satisfies $\xi B_4=0$, since $e_6 e_3=-e_5=-e_1e_4$, and $e_1e_7=e_6$.
Moreover, we can prove similarly that the point $B'_4=\left(\begin{matrix}A'&\\&A'\end{matrix} \right)\in V_4(\mathbb{O}^4)$ with $A'=\frac{1}{\sqrt{2}}\left( \begin{matrix}e_2&-e_6\\e_1&-e_5\end{matrix}\right)\in V_2(\mathbb{O}^2)$
is also a critical point of $F$.

At the end of this section, we establish the following sufficient and necessary conditions for points of special type in $V_4(\mathbb{O}^4)$ to be critical points of $F$.

\begin{prop}\label{diag A44}
Given $A=\begin{pmatrix} \textbf{a}\\ \textbf{b}\end{pmatrix}$
$\in V_2(\mathbb{O}^2)$,
the point $B=\begin{pmatrix} A&0\\0&A \end{pmatrix}\in V_4(\mathbb{O}^4)$ is a critical point of $F$ if and only if $A\in \mathrm{Im}~\pi$, where $\pi:~~V_3(\mathbb{O}^2)\longrightarrow V_2(\mathbb{O}^2)$ is given by $\pi\begin{pmatrix}\textbf{a}\\\textbf{b}\\\textbf{c}\end{pmatrix}=\begin{pmatrix}\textbf{a}\\\textbf{b}\end{pmatrix}$.
\end{prop}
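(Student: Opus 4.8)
The plan is to feed the block-diagonal matrix $B$ into the criterion \eqref{xiA}, reduce the resulting system by linear algebra over $\mathbb{O}$, and then reinterpret what survives in terms of the $8$-dimensional subspaces $\mathbb{O}\textbf{a},\mathbb{O}\textbf{b}\subset\mathbb{R}^{16}=\mathbb{O}^2$ obtained by left $\mathbb{O}$-multiplication.

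First I would unwind \eqref{xiA} for $B$. Its four rows are $(\textbf{a},0),(\textbf{b},0),(0,\textbf{a}),(0,\textbf{b})$, so substituting them into the four-equation system preceding \eqref{xiA} and separating the two $\mathbb{O}^2$-blocks yields, for $x_1,\dots,x_6\in\mathrm{Im}\,\mathbb{O}$ not all zero: $x_1\textbf{b}=0$, $x_6\textbf{b}=0$, $x_2\textbf{a}+x_3\textbf{b}=0$, $x_4\textbf{a}+x_5\textbf{b}=0$, $x_2\textbf{a}+x_4\textbf{b}=0$, $x_3\textbf{a}+x_5\textbf{b}=0$. Since $\textbf{b}\neq0$ and $\mathbb{O}$ is a division algebra, $z\textbf{b}=0$ forces $z=0$; hence $x_1=x_6=0$, and comparing the third with the fifth equation forces $x_3=x_4=:u$. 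What remains is the pair $x_2\textbf{a}+u\textbf{b}=0$, $u\textbf{a}+x_5\textbf{b}=0$, and the non-triviality condition collapses to $u\neq0$ (if $u=0$ the pair gives $x_2=x_5=0$). Thus $B$ is a critical point of $F$ if and only if there exist $u\in\mathrm{Im}\,\mathbb{O}\setminus\{0\}$ and $x_2,x_5\in\mathrm{Im}\,\mathbb{O}$ with $x_2\textbf{a}=-u\textbf{b}$ and $x_5\textbf{b}=-u\textbf{a}$; call this $(\dagger)$.

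Next I would prove the chain $(\dagger)\Leftrightarrow(\mathbb{O}\textbf{a}\cap\mathbb{O}\textbf{b}\neq\{0\})\Leftrightarrow(A\in\mathrm{Im}\,\pi)$. The implication $(\dagger)\Rightarrow\mathbb{O}\textbf{a}\cap\mathbb{O}\textbf{b}\neq\{0\}$ is immediate, since $u\textbf{b}=-x_2\textbf{a}$ is a nonzero common element. For the converse, pick a unit $\textbf{w}=\lambda\textbf{a}=\mu\textbf{b}$ in the intersection, so $|\lambda|=|\mu|=1$. One checks first that $\lambda,\mu\in\mathrm{Im}\,\mathbb{O}$ automatically: by alternativity $\langle\textbf{w},\textbf{a}\rangle=\langle\lambda\textbf{a},\textbf{a}\rangle=\mathrm{Re}\,\lambda$, while $\textbf{w}=\mu\textbf{b}\in\mathbb{O}\textbf{b}$ is orthogonal to $\textbf{a}$ because $(\textbf{a},\textbf{b})\in V_2(\mathbb{O}^2)$ means $\sum_i a_i\bar b_i=0$; hence $\mathrm{Re}\,\lambda=0$, and symmetrically $\mathrm{Re}\,\mu=0$. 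Since $\mu b_i=\lambda a_i$ gives $b_i=\bar\mu(\lambda a_i)$, the left Moufang identity $\lambda\bigl(\bar\mu(\lambda y)\bigr)=(\lambda\bar\mu\lambda)y$ yields $\lambda\textbf{b}=(\lambda\bar\mu\lambda)\textbf{a}$; then $u:=\lambda$, $x_5:=-\mu$ and $x_2:=-\lambda\bar\mu\lambda=\lambda\mu\lambda$ verify $(\dagger)$, with $x_2\in\mathrm{Im}\,\mathbb{O}$ because $\overline{\lambda\mu\lambda}=\bar\lambda\bar\mu\bar\lambda=-\lambda\mu\lambda$. Finally, $\mathbb{O}\textbf{a}$ and $\mathbb{O}\textbf{b}$ are $8$-dimensional (as $\textbf{a},\textbf{b}\neq0$), so $\mathbb{O}\textbf{a}\cap\mathbb{O}\textbf{b}\neq\{0\}$ iff $\mathbb{O}\textbf{a}+\mathbb{O}\textbf{b}$ has dimension at most $15$, iff there is a unit $\textbf{c}\perp\mathbb{O}\textbf{a}+\mathbb{O}\textbf{b}$; and since $\langle\textbf{c},\rho\textbf{a}\rangle=\langle\sum_i c_i\bar a_i,\rho\rangle$ for all $\rho$ (the real part of a triple product being associative), the conditions $\textbf{c}\perp\mathbb{O}\textbf{a}$, $\textbf{c}\perp\mathbb{O}\textbf{b}$ amount to $\sum_i c_i\bar a_i=\sum_i c_i\bar b_i=0$, which together with $\sum_i a_i\bar b_i=0$ say exactly that $(\textbf{a},\textbf{b},\textbf{c})\in V_3(\mathbb{O}^2)\cong\Omega_{16,8}$, i.e. $A=\pi(\textbf{a},\textbf{b},\textbf{c})\in\mathrm{Im}\,\pi$.

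The step I expect to be the main obstacle is $\mathbb{O}\textbf{a}\cap\mathbb{O}\textbf{b}\neq\{0\}\Rightarrow(\dagger)$. One cannot simply take $\textbf{c}=u\textbf{b}$ as the missing frame vector, since it fails $\langle\textbf{b},E_j\textbf{c}\rangle=0$; and because $\textbf{b}$ can never be a left $\mathbb{O}$-multiple of $\textbf{a}$ (that would force $\textbf{b}=\nu\textbf{a}$ with $\nu\perp\mathbb{O}$, hence $\nu=0$), the scalars $\lambda,\mu$ never lie in a common associative subalgebra, so the relation $u\textbf{b}\in(\mathrm{Im}\,\mathbb{O})\textbf{a}$ genuinely requires the Moufang identity rather than ordinary cancellation. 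Recognizing this, together with the observation that $\lambda,\mu$ are automatically purely imaginary, is the crux; the remaining points — translating octonionic orthogonality into ordinary orthogonality of spans, and the dimension count in $\mathbb{R}^{16}$ — are routine.
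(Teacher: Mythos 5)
Your proof is correct, but it follows a genuinely different route from the paper's. The paper works with the Gram matrix $G$ of (\ref{G}): at the block-diagonal point $B$ most of the blocks $\Phi$ vanish, and $\det G$ factors as $2^{28}\det(2I)\det\bigl(2(I-\phi^t\phi)\bigr)$ with $(\phi)_{ij}=\langle e_i\textbf{a},e_j\textbf{b}\rangle$, so criticality is equivalent to $\det(I-\phi^t\phi)=0$, i.e.\ to $\dim\mathrm{Span}\{\textbf{a},e_1\textbf{a},\dots,e_7\textbf{a},\textbf{b},e_1\textbf{b},\dots,e_7\textbf{b}\}<16$, which is the same as your condition $\dim(\mathbb{O}\textbf{a}+\mathbb{O}\textbf{b})<16$. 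You instead start from the dual criterion (\ref{xiA}), reduce the linear system correctly to the two equations $x_2\textbf{a}=-u\textbf{b}$, $x_5\textbf{b}=-u\textbf{a}$ with $u\neq 0$, and then prove the equivalence with $\mathbb{O}\textbf{a}\cap\mathbb{O}\textbf{b}\neq\{0\}$ by octonion algebra; the key steps (that $\lambda,\mu$ are automatically purely imaginary because $\textbf{b}\perp\mathbb{O}\textbf{a}$, and the left Moufang identity $\lambda(\bar\mu(\lambda y))=(\lambda\bar\mu\lambda)y$ producing $x_2=\lambda\mu\lambda$) are all valid, as is the final translation of $\textbf{c}\perp\mathbb{O}\textbf{a}+\mathbb{O}\textbf{b}$ into $(\textbf{a},\textbf{b},\textbf{c})\in V_3(\mathbb{O}^2)$ via associativity of the real part of triple products. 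The trade-off: the paper's determinant computation is shorter and purely mechanical, while your argument costs the Moufang-identity work but yields more — it exhibits the explicit degenerate direction $\xi$ at a critical point and identifies criticality with the geometric condition $\mathbb{O}\textbf{a}\cap\mathbb{O}\textbf{b}\neq\{0\}$ rather than only with a vanishing determinant.
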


\begin{proof}
Observe that the Gram matrix of the vectors $\{e_1\textbf{a},\cdots, e_7\textbf{a}, e_1\textbf{b},\cdots,$ $e_7\textbf{b}\}$ is given by $\begin{pmatrix}I&\phi\\\phi^t&I\end{pmatrix}$ with $(\phi)_{ij}=:\langle e_i\textbf{a}, e_j\textbf{b}\rangle$, $i, j=1,\cdots, 7$. Clearly, $\phi$ is skew-symmetric and $$\det \begin{pmatrix}I&\phi\\ \phi^t&I\end{pmatrix}=\det(I-\phi^t\phi)\geqslant 0.$$
Simplifying the matrix $G$ defined in (\ref{G}), we find that at the point $B$
\begin{equation*}
\det G=2^{14}\det\begin{pmatrix}
2I&\phi&\phi&0\\\phi^t&2I&0&\phi\\
\phi^t&0&2I&\phi\\
0&\phi^t&\phi^t&2I
\end{pmatrix}=
2^{28}\det\begin{pmatrix}
2I&0
\\-\phi^t\phi&2(I-\phi^t\phi)
\end{pmatrix}
\geqslant 0
\end{equation*}
Thus $B$ is a critical point of $F$ if and only if $\det(I-\phi^t\phi)=0$,
and if and only if $\dim \mathrm{Span}\{\textbf{a}, e_1\textbf{a}, \cdots, e_7\textbf{a}, \textbf{b}, e_1\textbf{b}, \cdots, e_7\textbf{b}\}<16$, which is equivalent to the condition that $A\in \mathrm{Im}~\pi$.
\end{proof}

\begin{rem}
Compared with Proposition \ref{uk},
for $k=4$, we claim $Sp(4)\nsubseteq \mathcal{R}(F)\subset V_4(\mathbb{O}^4)$. In fact, we can choose $U=\frac{1}{\sqrt{2}}\begin{pmatrix} 1&e_1\\ e_2&e_3\end{pmatrix}\in Sp(2)$. Since $U\begin{pmatrix} -e_5\\ e_4\end{pmatrix}=0$, we obtain $U\in \mathrm{Im}~\pi$ of $\pi:~V_3(\mathbb{O}^2)\rightarrow V_2(\mathbb{O}^2)$. Then it follows from Proposition \ref{diag A44} that $\begin{pmatrix} U&0\\0&U \end{pmatrix}\in V_4(\mathbb{O}^4)$ is a critical point of $F$. However, $\begin{pmatrix} U&0\\0&U \end{pmatrix}\in Sp(2)\times Sp(2)\subset Sp(4)$.
\end{rem}

\begin{prop}
Given $A=\begin{pmatrix} \textbf{a}\\ \textbf{b}\end{pmatrix}$,
$B=\begin{pmatrix} \textbf{c}\\ \textbf{d}\end{pmatrix}$
$\in V_2(\mathbb{O}^2)$, the point $C=\begin{pmatrix} A&0\\0&B \end{pmatrix}\in V_4(\mathbb{O}^4)$ is a critical point of $F$ if and only if $\mathrm{det}\big(Z-(\varphi\psi+\psi\varphi)Z^{-1}(\varphi\psi+\psi\varphi)\big)=0$, where
$\varphi=\Phi_{\textbf{ab}}$, $\psi=\Phi_{\textbf{cd}}$ and $Z=4I-(\varphi\varphi^t+\psi\psi^t)$.
\end{prop}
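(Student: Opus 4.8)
The plan is to follow the same strategy used in Proposition \ref{diag A44}, namely to compute $\det G$ where $G$ is the Gram matrix of the vectors $\nabla f_1,\dots,\nabla r_7$ at the point $C$, and to show that $C$ is critical precisely when this determinant vanishes. First I would specialize the matrix $G$ in (\ref{G}) to the block-diagonal point $C=\mathrm{diag}(A,B)$. Since $\textbf{a},\textbf{b}$ occupy the first two coordinate blocks and $\textbf{c},\textbf{d}$ the last two, all the mixed Gram blocks $\Phi_{\textbf{ac}},\Phi_{\textbf{ad}},\Phi_{\textbf{bc}},\Phi_{\textbf{bd}}$ (and their transposes) vanish, because $e_i\textbf{a}$ and $e_j\textbf{c}$ live in orthogonal coordinate subspaces of $\mathbb{O}^4$. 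This collapses $G$ to a $6\times 6$ block matrix (each block $7\times 7$) whose only nonzero off-diagonal blocks are built from $\varphi=\Phi_{\textbf{ab}}$ and $\psi=\Phi_{\textbf{cd}}$, and their transposes $\varphi^t=\Phi_{\textbf{ba}}$, $\psi^t=\Phi_{\textbf{dc}}$.

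Next I would perform block row/column operations to reduce $\det G$ to a Schur-complement expression. Group the six block-rows into the three "pure" directions $(\nabla f,\nabla r)$ — these involve only $\varphi$ and $\psi$ — and the three "cross" directions $(\nabla g,\nabla h,\nabla p,\nabla q)$; more precisely one eliminates the $2I_7$-blocks that are decoupled, getting a factor of a power of $2$ times the determinant of a reduced matrix in $\varphi,\psi$ only. Carrying out the elimination should produce exactly a Schur complement of the form $Z - (\varphi\psi+\psi\varphi)Z^{-1}(\varphi\psi+\psi\varphi)$ with $Z=4I-(\varphi\varphi^t+\psi\psi^t)$, up to an overall positive constant. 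Here one uses $\varphi\varphi^t = -\Phi_{\textbf{ab}}\Phi_{\textbf{ab}}$ type identities coming from skew-symmetry of the $E_i$ on $\Omega$, together with the fact that at a point of $V_2(\mathbb{O}^2)$ we have $|\textbf{a}|=|\textbf{b}|=1$ and $\langle\textbf{a},\textbf{b}\rangle=\langle\textbf{a},E_i\textbf{b}\rangle=0$, so that $Z$ is positive definite (each of $\varphi\varphi^t,\psi\psi^t$ has operator norm $\le 1$, hence $\varphi\varphi^t+\psi\psi^t\le 2I<4I$), making $Z^{-1}$ well-defined and the Schur complement meaningful.

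Finally, since the eliminated blocks contribute only the positive factor and $Z>0$, the matrix $G$ is positive semidefinite with $\det G = c\cdot\det\big(Z-(\varphi\psi+\psi\varphi)Z^{-1}(\varphi\psi+\psi\varphi)\big)$ for some constant $c>0$; hence $G$ fails to be positive definite — equivalently, by the dimension count $\mathrm{dim}V_C+\mathrm{rank}J_C=8kn$ the point $C$ is a critical point of $F$ — if and only if this Schur determinant is zero. I expect the main obstacle to be the bookkeeping in the block elimination: one must choose the right order of the six block-rows and verify that the cross blocks $\nabla g,\nabla h,\nabla p,\nabla q$ genuinely decouple into a clean $2I_7$-summand after one round of elimination, so that the remaining $2\times 2$ block system in the pure directions yields exactly the stated $Z$ and $\varphi\psi+\psi\varphi$. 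A secondary point requiring care is confirming positive-definiteness of $Z$ (so that $Z^{-1}$ and the Schur complement are legitimate) from the constraints defining $V_2(\mathbb{O}^2)$.
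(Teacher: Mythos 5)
Your proposal follows essentially the same route as the paper: at $C=\mathrm{diag}(A,B)$ the mixed blocks $\Phi_{\textbf{ac}},\Phi_{\textbf{ad}},\Phi_{\textbf{bc}},\Phi_{\textbf{bd}}$ of $G$ vanish, block elimination reduces $\det G$ to $\det\bigl(\begin{smallmatrix} Z & W\\ W & Z\end{smallmatrix}\bigr)$ with $W=\varphi\psi+\psi\varphi$, and positivity of $Z$ (from $I-\varphi\varphi^t\geqslant 0$, $I-\psi\psi^t\geqslant 0$) lets the Schur complement deliver the stated criterion. Only a bookkeeping remark: it is the $\nabla f$ and $\nabla r$ rows that decouple completely into $2I_7$ summands (not the $g,h,p,q$ ones, which you have labelled the other way around), while the remaining $4\times 4$ block system in $\nabla g,\nabla h,\nabla p,\nabla q$ is what collapses to the $2\times 2$ form above.
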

\begin{proof}
Similar to the proof of Proposition \ref{diag A44}, by simplifying the matrix $G$ defined in (\ref{G}), $C$ is a critical point of $F$ if and only if
\begin{equation*}
\det\begin{pmatrix}
4I-(\varphi\varphi^t+\psi\psi^t)&\varphi\psi+\psi\varphi
\\ \varphi\psi+\psi\varphi&4I-(\varphi\varphi^t+\psi\psi^t))
\end{pmatrix}
=0
\end{equation*}
As discussed during the proof of Proposition \ref{diag A44}, we have $I-\varphi\varphi^t\geqslant 0$.
Moreover, $I-\psi\psi^t\geqslant 0$. Hence, $Z$ is positive definite and the conclusion follows directly.
\end{proof}


\begin{thebibliography}{123}

\bibitem[DS69]{DS69}
R. R. Douglas and F. Sigrist, \emph{Sphere bundles over spheres and H-spaces}, Topology, \textbf{8} (1969), 115--118.


\bibitem[FKM81]{FKM81}
D. Ferus, H. Karcher, and H.~F. M{\"u}nzner,
\emph{Cliffordalgebren und neue isoparametrische Hyperfl\"{a}chen},
Math. Z., \textbf{177} (1981), 479--502.




\bibitem[GT14]{GT14}
J. Q. Ge and Z. Z. Tang, \emph{Geometry of isoparametric
hypersurfaces in Riemannian manifolds}, Asian J. Math., \textbf{18} (2014), 117--126.


\bibitem[HL71]{HL71} W.-Y. Hsiang and H. B. Lawson, \emph{Minimal submanifolds of low cohomogeneity},
J. Differential Geom., \textbf{5} (1971), 1--38.



\bibitem[Jam58]{Ja58}
I. M. James, \emph{Cross-sections of Stiefel manifolds}, Proc. London Math. Soc., \textbf{8} (1958), 536--547.

\bibitem[Jam59]{Jam59}
I. M. James, \emph{Spaces associated with Stiefel manifolds}, Proc. London Math. Soc., \textbf{9} (1959), 115--140.

\bibitem[Jam61]{Jam61}
I. M. James, \emph{On sphere-bundles over spheres}, Comment. Math. Helv., \textbf{35} (1961), 126--135.

\bibitem[Jam76]{Jam76}
I. M. James, \emph{The topology of Stiefel manifolds}, London Mathematical Society Lecture Note Series, No. 24., Cambridge University Press, 1976.

\bibitem[Kra02]{Kra02}
L. Kramer, \emph{Homogeneous spaces, Tits buildings, and isoparametric hypersurfaces}, Mem. Amer. Math. Soc. \textbf{158} (2002), No. 752, xvi+114 pp.


\bibitem[LMP69]{LMP69}
G. Lusztig, J. Milnor and F.P. Peterson, \emph{Semi-characteristics and cobordism}, Topology, \textbf{8} (1969), 357--359.



\bibitem[OT]{OT75}
H. Ozeki and M. Takeuchi, \emph{On some types of isoparametric
hypersurfaces in spheres, I, II}, T\^{o}hoku Math. J., \textbf{27} (1975),
515--559; T\^{o}hoku Math. J., \textbf{28} (1976), 7--55.

\bibitem[QTY21]{QTY21}
C. Qian, Z. Z. Tang and W. J. Yan, \emph{Topology and curvature of isoparametric families in spheres}, to appear in Comm. Math. Statis., DOI:10.1007/s40304-021-00259-2.

\bibitem[QTY22]{QTY22}
C. Qian, Z. Z. Tang, and W. J. Yan, \emph{Clifford algebra, harmonic maps and metrics of non-negative curvature}, preprint, 2022


\bibitem[Sut64]{Sut64}
W. A. Sutherland, \emph{A note on the parallelizability of sphere-bundles over spheres}, J. London Math. Soc., \textbf{39} (1964), 55--62.

\bibitem[Tho69]{Th69}
E. Thomas, \emph{Vector fields on manifolds}, Bulletin of AMS, \textbf{75} (1969), 643--683.

\bibitem[TY13]{TY13}
Z. Z. Tang and W. J. Yan, \emph{Isoparametric foliation and Yau conjecture on the first eigenvalue},  J. Differential Geom., \textbf{94}
(2013), 521--540.

\bibitem[TY15]{TY15}
Z. Z. Tang and W. J. Yan, \emph{Isoparametric foliation and a problem of Besse on generalizations of Einstein condition}, Adv. Math., \textbf{285} (2015), 1970--2000.


\end{thebibliography}
\end{document}